\documentclass[11pt]{article}
\usepackage[T1]{fontenc}
\usepackage{amsmath,amsthm,amsfonts,amssymb,mathrsfs,bm}
\usepackage{amsmath,amsthm,amsfonts,amssymb,bm,wasysym}
\usepackage{epsfig}
\usepackage[usenames]{color}
\usepackage[usenames,dvipsnames]{xcolor}
\usepackage{verbatim}
\usepackage{hyperref}
\usepackage{multicol}
\usepackage{comment}
\usepackage{float}
\usepackage{graphicx}
\usepackage{centernot}
\usepackage{tikz}
\usepackage{color}
\usepackage[]{algorithm2e}
\usepackage{enumerate}
\usepackage{comment}
\usepackage[capitalize]{cleveref}
\usepackage{fontenc}
\usepackage{caption}

\usepackage{hyperref}
\hypersetup{
	colorlinks   = true, 
	urlcolor     = red, 
	linkcolor    = blue, 
	citecolor   = blue 
}

\Crefname{figure}{Figure}{Figures}

\parskip \medskipamount
\newtheorem{theorem}{Theorem}[section]
\newtheorem{definition}[theorem]{Definition}

\numberwithin{equation}{section}
\newtheorem{lemma}[theorem]{Lemma}
\newtheorem{proposition}[theorem]{Proposition}
\newtheorem{corollary}[theorem]{Corollary}

\newtheorem{claim}[theorem]{Claim}
\newtheorem*{claim*}{Claim}

\newtheorem{question}[theorem]{Question}
\newtheorem{conjecture}[theorem]{Conjecture}
\numberwithin{equation}{section}

\newcommand{\eps}{\varepsilon}
\newcommand{\union}{\cup}

\newcommand{\floor}[1]{\lfloor{#1}\rfloor}
\newcommand{\ceil}[1]{\lceil{#1}\rceil}
\newcommand{\abs}[1]{\left|{#1}\right|}

\newcommand{\chorddiagrams}[2]{{\mathcal{X}_{#1, #2}}}
\newcommand{\chordsignatures}[1]{{\mathcal{X}_{#1}}}
\newcommand{\surfacemaps}[2]{{\mathcal{M}_{#1, #2}}}
\newcommand{\parallelsigs}[2]{{\mathcal{S}_{#1, #2}}}

\definecolor{bittersweet}{rgb}{1.0, 0.44, 0.37}

\newcommand{\captionstyle}[1]{{#1}}

\title{Polynomial mixing for polygonal side matchings}
\author{Renan Gross\thanks{DPMMS, University of Cambridge. rg751@cam.ac.uk} \and An{\dj}ela \v{S}arkovi\'c \thanks{King's College and DPMMS, University of Cambridge. as2572@cam.ac.uk}}

\begin{document}

\maketitle

\begin{abstract}
	We introduce a natural Markov chain on chord diagrams, which, at every step, selects two random chords and swaps them if doing so preserves the diagram's genus. This generalizes the chord swap chain on the Catalan structure of non-intersecting chord diagrams. We show that for fixed genus, the chain mixes in polynomial time.
\end{abstract}
	
\section{Introduction}

Let $n >0$ be an integer, and let $P$ be a regular polygon with $2n$ vertices, labelled clockwise from $1$ to $2n$. A chord is a matching between two sides of $P$, and a chord diagram $X$ is a set of $n$ chords which constitute a perfect matching of the sides of $P$. 

The genus $g$ of a chord diagram $X$ is the genus of the orientable surface $S$ obtained by identifying the sides of $P$ according to the chords of $X$ while maintaining the orientation (see \cref{fig:diagram_and_surface}). The genus can be calculated using Euler's formula: treating the boundary of the original polygon as a planar graph with $2n$ vertices, $2n$ edges and two faces, after identifying the sides we are left with an $S$-embedded graph with $V$ vertices, $n$ edges, and one face; the genus is then given by 
\begin{equation} \label{eq:euler_characteristic}
	g=\frac{2+E-F-V}{2}=\frac{n+1-V}{2} \, 
\end{equation}
(see e.g. \cite[Chapter 1]{lando_zvonkin_graphs_on_surfaces} for a reference on graphs and surfaces). Since $V \geq 1$, the maximum possible genus is $n/2$ (for even $n$). 

\begin{figure}[H] 
	\centering
	\includegraphics[width=0.8\textwidth]{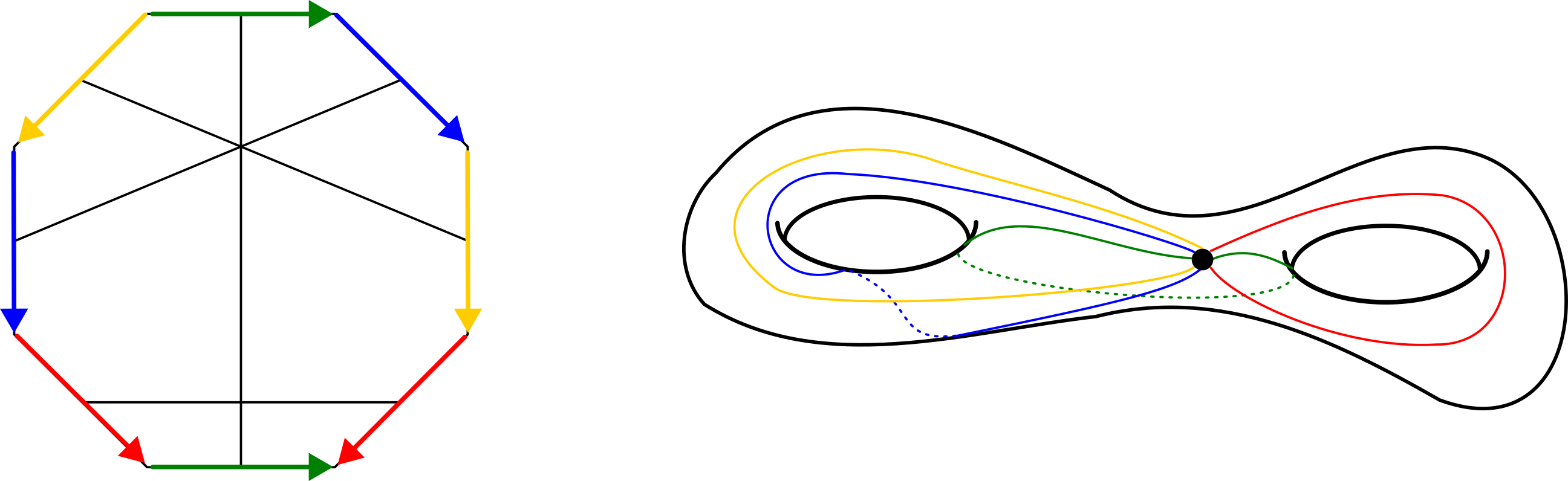}
	\captionsetup{width=.8\linewidth}
	\caption{\captionstyle{\textbf{Left}: a genus $2$ chord diagram on the octagon. \textbf{Right}: a cartoon of the surface obtained by gluing the sides together, together with an embedding of the glued-up octagon.}}
	\label{fig:diagram_and_surface}
\end{figure}	

We say that $Y$ is a chord swap of $X$ if there exist two distinct chords $\{A,B\}, \{C,D\} \in X$ such that $Y$ is equal to $X$ but with the chord endpoints swapped, i.e. 
\begin{equation*}
	Y = \left( X \setminus \{ \{A,B\}, \{C,D\}\}\right) \union \{ \{A,C\}, \{B,D\}\} 
\end{equation*}
or 
\begin{equation*}
	Y = \left( X \setminus \{\{A,B\}, \{C,D\}\}\right) \union \{\{A,D\}, \{B,C\}\} \, .
\end{equation*}

In this paper, we investigate a chord-swapping Markov chain on chord diagrams of fixed genus.

\begin{definition}\label{def:fixedgenuschain} 
	For positive integers $n \geq 2$ and $g \leq n/2$, let $\chorddiagrams{n}{g}$ be the set of all genus $g$ chord diagrams with $n$ chords. The \emph{fixed genus chord swap chain} on $\chorddiagrams{n}{g}$ is the Markov chain obtained by the following transition kernel:
	\begin{equation*}
		P(X,Y) = \begin{cases}
			\frac{1}{4 {n \choose 2}} & Y \text{ is a chord swap of $X$ and has the same genus as $X$} \\
			1 - \sum_{Z \in \chorddiagrams{n}{g} \backslash \{X\}} P(X,Z) & X=Y \, .
		\end{cases}
	\end{equation*}
\end{definition}
This Markov chain can be obtained by the following procedure: with probability $1/2$, do nothing; otherwise, if the current state is the chord diagram $X$, choose two chords $\{A,B\}$ and $\{C,D\}$ uniformly from all possible pairs of chords, and let $X'$ be a uniformly random chord diagram selected from the two possible chord swaps. The next state is $X'$ if it has the same genus as $X$; otherwise, the chain stays as $X$.

The transition matrix is symmetric and so the chain is reversible and its stationary distribution $\pi(x)$ is the uniform distribution on $\chorddiagrams{n}{g}$. However, it is not immediately apparent from the definition that it is irreducible; we will show that this is indeed the case when $n > 2g$ (and so the chain converges to its stationary distribution), and that when $n=2g$ there is one isolated diagram. In fact, we show that the diameter of the Markov chain graph is bounded above by $C(n + g^2)$ for some universal constant $C > 0$ (see \cref{sec:irreducibility}).

Our main result is that for every fixed $g > 0$, the fixed genus edge swap chain has spectral gap polynomial in $n$. 
\begin{theorem} \label{thm:spectral_gap_lower_bound}
	Let $g > 0$, let $n > 2g$, and let $\gamma_{n,g}$ be the spectral gap of the fixed genus chord swap chain on $\chorddiagrams{n}{g}$. There exists a constant $C_g > 0$ depending only on the genus such that 
	\begin{equation*}
		\gamma_{n,g} \geq C_g n^{-1488g+616} \, .
	\end{equation*}
\end{theorem}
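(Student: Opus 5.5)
We will use the canonical paths (multicommodity flow) method, comparing against a chain whose spectral gap is known: the chord swap chain on non-intersecting (genus $0$, Catalan) chord diagrams, which has polynomial spectral gap by earlier work. The exponent $-1488g+616$ suggests a layered argument: a decomposition costing a factor polynomial in $n$ of degree linear in $g$, on top of a base genus-$0$ bound. So the plan is to decompose $\chorddiagrams{n}{g}$ into pieces that each look like a product of Catalan structures, and then glue the pieces together.

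\textbf{Step 1 (structure).} Every genus $g$ diagram has a ``skeleton''. Repeatedly contract chords joining adjacent sides; this is an operation that preserves genus, since it removes one chord and one vertex. Also merge parallel chords, meaning chords that remain adjacent at both ends after contraction. The result is a reduced diagram with at most $6g-3$ classes of parallel chords, of which there are finitely many (a number depending only on $g$). So $X$ is determined by three pieces of data:
\begin{enumerate}[(a)]
\item a reduced genus-$g$ skeleton;
\item the multiplicities of its $O(g)$ parallel classes;
\item planar (Catalan) sub-diagrams inserted in the $O(g)$ gaps.
\end{enumerate}
This gives a decomposition of $\chorddiagrams{n}{g}$ into $O_g(n^{O(g)})$ blocks. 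Each block is essentially a product of Catalan structures indexed by the gap sizes.

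\textbf{Step 2 (within-block mixing).} Within a block, chord swaps acting inside a single planar gap coincide with the genus-$0$ chord swap chain. Hence, by the known polynomial gap for Catalan structures and the tensorization of spectral gaps for products, the restricted chain has gap at least $n^{-c}$. The chain can also move mass between gaps. We will handle this with a comparison argument, routing a chord from one gap to another through a constant number of genus-preserving swaps.

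\textbf{Step 3 (between blocks).} We apply a decomposition theorem for spectral gaps (Madras--Randall, or Jerrum--Son--Tetali--Vigoda). This requires two inputs. The first is a lower bound on the gap of each restricted chain, which Step 2 provides. The second is a lower bound on the gap of the projection chain on blocks, weighted by block sizes. To bound the projection chain we must show two things:
\begin{enumerate}[(i)]
\item Adjacent blocks, for example those differing by changing a multiplicity by one or applying a local skeleton move, have comparable sizes up to polynomial factors, since their sizes are Catalan-type products.
\item Enough transitions connect them: a polynomial fraction of each block's boundary leads to the neighbouring block.
\end{enumerate}
The diameter bound $C(n+g^2)$ from \cref{sec:irreducibility} gives paths of polynomial length in the projection graph. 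Canonical paths on this graph then give a gap of at least $n^{-O(g)}$.

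\textbf{Main obstacle.} The hard part is Step 3's projection chain, specifically the conductance between blocks. The block sizes depend on the multiplicities through products of Catalan numbers, so they vary exponentially. We must show that the number of diagrams on the boundary between two neighbouring blocks is at least a $\mathrm{poly}(n)^{-1}$ fraction of the smaller block. We would first try an explicit injection: map a typical diagram of one block to a neighbour by a single swap at the boundary of a gap, and check that this loses only polynomially many preimages. Because each of the $O(g)$ independent stages loses a fixed power of $n$, the constants add up to an exponent of the form $-ag+b$, consistent with the stated bound.
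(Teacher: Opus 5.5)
\textbf{There is a genuine gap.} Your architecture is the one the paper uses: a restriction--projection decomposition, with genus-$0$ restriction chains controlled by Cohen's bound and tensorization. But the partition you choose breaks it, and the step you flag as the main obstacle is left with a strategy that does not work as stated.

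\textbf{Your blocks.} Your blocks fix the multiplicities of the parallel classes. Between two consecutive chords of a class of multiplicity $m$ there can be arbitrary planar material on both sides, so such a class carries $2(m-1)$ extra gaps. Your data (a)--(c) with only $O(g)$ gaps therefore do not parametrize $\chorddiagrams{n}{g}$. This leaves two options, both bad:
\begin{itemize}
\item If the positions of the interior parallel chords are part of the block index, there are exponentially many blocks, contradicting your $O_g(n^{O(g)})$ count.
\item If they are not, a block is a union of products of up to $\Theta(n)$ Catalan factors. The restricted chain is then not a product of genus-$0$ chains, and your unspecified ``routing between gaps'' comparison is a problem of the same type as the original one.
\end{itemize}

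\textbf{Your projection argument.} As you note, block masses then vary exponentially: a block in which almost all chords are parallel has mass about $4^{-n}$ relative to the block with all multiplicities $1$. In that regime your three inputs do not imply a polynomial gap. Those inputs are comparable sizes for neighbouring blocks, a polynomial fraction of each block leading to each neighbour, and polynomial-length paths from the diameter bound. A path $1,\ldots,m$ with $\overline{\pi}(i)\propto 2^{\abs{i-m/2}}$ and constant nearest-neighbour transition probabilities satisfies all three, yet its bottleneck ratio is $2^{-\Omega(m)}$. Canonical paths would need a global argument that routes avoid light blocks, and you do not give one. So the projection bound is not established.

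\textbf{The missing idea.} You need a partition in which every block has mass within $n^{O(g)}$ of every other; the projection chain then becomes easy. The paper indexes blocks by the signature $S(X)$, fixing only the at most $12g-6$ \emph{extremal} chords of the flatly parallel classes. The interior parallel chords, together with everything between them, form a single non-crossing perfect matching on $I_i$. This set is a union of two intervals, hence one Catalan structure. So $\chordsignatures{S}$ is exactly a fixed set of $O(g)$ chords times a product of $O(g)$ Catalan structures of total size $n-O(g)$. The consequences are:
\begin{itemize}
\item The restriction bound $Cn^{-4}$ follows at once.
\item By the Catalan asymptotics and AM--GM (\cref{claim:ratio_of_catalan}), $\abs{\chordsignatures{S}}/\abs{\chordsignatures{T}}\geq C_g n^{-O(g)}$ for all $S,T$. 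Together with $\abs{\parallelsigs{n}{g}}\leq C_g n^{24g-12}$, this gives $\overline{\pi}(S)\geq C_g n^{-60g+30}$ uniformly.
\item No canonical paths or diameter estimate are needed. Cheeger's inequality with the crude bound $\Phi\geq\min_{\overline{P}(S,T)>0}2\overline{\pi}(S)\overline{P}(S,T)$ suffices.
\end{itemize}
It remains to show $\overline{P}(S,T)\geq C_g n^{-684g+280}$. The paper does this by freezing the two swapped chords, the extremal chords of $X$, and those extremal chords of $Y$ lying in $X$. It then freezes $O(g)$ further chords chosen to make every remaining interval even (\cref{claim:enough_to_project}). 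One fixed swap is then valid for an $n^{-O(g)}$ fraction of $\chordsignatures{S}$.
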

This theorem gives a polynomial-in-$n$ mixing time bound: since $\abs{\chorddiagrams{n}{g}} \leq (2n-1)!! \leq (2n)^{2n}$, using the well-known relation $t_\mathrm{mix}(\eps) \leq \ceil{\frac{1}{\gamma}\log\left(\frac{1}{\eps \min_x \pi(x)} \right)}$, we have 
\begin{corollary}
	The mixing time of the fixed genus chord swap chain satisfies 
	\begin{equation*}
		t_\mathrm{mix}(\eps) \leq C_g n^{1488g-615}(\log n + \log\frac{1}{\eps}) \, .
	\end{equation*}
\end{corollary}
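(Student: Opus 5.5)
The statement to prove is the corollary, which follows directly from \cref{thm:spectral_gap_lower_bound} and the standard relaxation-time bound quoted just before it. The plan is to apply that bound with $\pi$ uniform on $\chorddiagrams{n}{g}$ and $n>2g$. By the irreducibility result of \cref{sec:irreducibility}, the chain is irreducible. It is also lazy, since $P(X,X)\ge 1/2$: each $X$ has at most $2\binom{n}{2}$ chord swaps, each of probability $\frac{1}{4\binom n2}$. Together with reversibility, this justifies
\begin{equation*}
t_\mathrm{mix}(\eps) \le \ceil{\frac{1}{\gamma_{n,g}}\log\left(\frac{1}{\eps \min_x \pi(x)}\right)}.
\end{equation*}

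Next I bound $\min_x \pi(x)$. Since $\pi(x) = 1/\abs{\chorddiagrams{n}{g}}$ and $\chorddiagrams{n}{g}$ is contained in the set of all perfect matchings of $2n$ sides, we have $\abs{\chorddiagrams{n}{g}}\le (2n-1)!!\le (2n)^{2n}$. Hence
\begin{equation*}
\log\frac{1}{\eps\min_x\pi(x)} \le 2n\log(2n)+\log\frac1\eps .
\end{equation*}

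Now I substitute $1/\gamma_{n,g}\le C_g^{-1}n^{1488g-616}$ from \cref{thm:spectral_gap_lower_bound}. This gives
\begin{equation*}
t_\mathrm{mix}(\eps)\le 1+C_g^{-1}n^{1488g-616}\bigl(2n\log 2+2n\log n+\log\tfrac1\eps\bigr).
\end{equation*}
With $n\ge 2$, we have $2n\log 2 + 2n\log n\le 4n\log n$ and $\log\frac1\eps\le n\log\frac1\eps$. The additive $1$ from the ceiling is absorbed as long as $n^{1488g-615}(\log n+\log\frac1\eps)$ is bounded below by a positive constant. This holds since $g\ge1$ makes the exponent positive and $\log n\ge\log 2$. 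Renaming the constant to $C_g' = 4C_g^{-1}+2/\log 2$ (still called $C_g$) yields the claimed bound.

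There is no real obstacle. The only points to check are the laziness and irreducibility hypotheses of the relaxation-time bound, and the absorption of the ceiling into the constant. All of the substantive work lies in \cref{thm:spectral_gap_lower_bound} itself.
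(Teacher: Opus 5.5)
Your proposal is correct and follows the paper's argument exactly: combine the spectral gap bound of \cref{thm:spectral_gap_lower_bound} with $t_\mathrm{mix}(\eps) \le \ceil{\gamma^{-1}\log(1/(\eps\min_x\pi(x)))}$ and $\abs{\chorddiagrams{n}{g}} \le (2n)^{2n}$. The paper leaves three details implicit, and you handle each correctly: laziness, so that the spectral gap coincides with the absolute spectral gap; irreducibility for $n>2g$; and absorbing the ceiling into $C_g$.
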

Two chords are said to \textit{intersect} or \textit{cross} each other if the straight lines connecting the centres of the sides intersect each other. It is not hard to show that a chord diagram has genus $0$ if and only if its chords do not cross. Our Markov chain is therefore a generalization to higher genus of a chord swapping chain on non-intersecting chord diagrams, a Catalan object of special interest (for example, it was proposed by McShine and Tetali as a possible comparison chain for chord-flips on triangulations \cite{mcshine_tetali_mixing_triangulation}). In her thesis, Cohen showed the following bound on the spectral gap.
\begin{theorem}[Application of Proposition 2.11 in \cite{cohen_thesis}] \label{thm:cohem_gap_lower_bound}
	Let $\gamma_{n,0}$ be the spectral gap of the non-intersecting chord swap chain. There exists a universal constant $C >0$ such that 
	\begin{equation}  \label{eq:cohen_gap_lower_bound}
		\gamma_{n,0} \geq C n^{-4} \, .
	\end{equation}
\end{theorem}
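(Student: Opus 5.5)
The plan is to get \eqref{eq:cohen_gap_lower_bound} by a canonical path (multicommodity flow) argument. The exponent $4$ should come out as (path length $n$) $\times$ (inverse transition probability $n^2$) $\times$ (encoding overhead $n$). I would use Cohen's Proposition 2.11 to supply the flow bound once the hypotheses are checked. If its hypotheses are not literally matched, I would instead build the flow directly and use the Diaconis--Saloff-Coste comparison theorem.

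\textbf{Step 1: check the chain.} Non-crossing diagrams on $2n$ sides are in bijection with Dyck paths and with plane trees. In each chord swap of our chain, two non-crossing chords $\{A,B\},\{C,D\}$ are replaced by another non-crossing pair on the same four endpoints. So a swap acts only on a "local" configuration. The moves are symmetric, and each has probability $\frac{1}{4\binom n2}=\Theta(n^{-2})$. I expect these to be exactly the moves Cohen's proposition allows, possibly up to a constant factor in the rates.

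\textbf{Step 2: canonical paths.} For two non-crossing diagrams $X,Y$, process the sides $1,2,\dots,2n$ in order. At stage $i$, if the chord of $X$ (current state) at the smallest unfixed side differs from its chord in $Y$, perform one swap that installs $Y$'s chord there. One must check that such a swap exists and keeps the diagram non-crossing. Let $\{i,j\}\in Y$ and suppose side $j$ is currently matched to some $k$ and side $i$ to some $l$. The chord $\{i,j\}$ encloses a region that, in $Y$, is a complete non-crossing sub-diagram. Planarity of the current diagram restricted to the unfixed sides should then force $\{l,k\}$ to be non-crossing with the rest. This gives a path of length at most $n$.

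\textbf{Step 3: congestion.} For a transition $e=(Z,Z')$, I would show that the pair $(X,Y)$ is determined by $Z$, the stage index, and a non-crossing diagram $W$ built from $Y$'s fixed prefix and $X$'s unfixed part. Then the number of pairs $(X,Y)$ routed through $e$ is at most $O(n)\cdot|\chorddiagrams{n}{0}|$. The resulting congestion is
\begin{equation*}
\rho \;\le\; \max_e \frac{1}{\pi(e)}\sum_{(X,Y)\ni e}\pi(X)\pi(Y)\,|\gamma_{XY}| \;=\; O(n^2\cdot n\cdot n)=O(n^4),
\end{equation*}
and $\gamma_{n,0}\ge 1/\rho$ gives the theorem.

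\textbf{Main obstacle.} I expect the hardest part to be the injectivity of this encoding. Gluing $Y$'s prefix to $X$'s suffix must itself be a valid non-crossing diagram, up to polynomially many "defect" choices, and the defect must be controlled to $O(1)$ bits beyond the stage index. If it is not, the exponent worsens. Matching Cohen's Proposition 2.11 exactly is designed to absorb this step.
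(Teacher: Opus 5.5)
\textbf{There is a genuine gap at Step~2, and it is structural, not technical.} To create $Y$'s chord $\{i,j\}$ with a single swap you have no freedom. You must swap the current chords $\{i,l\}$ and $\{j,k\}$, and the result $\{i,j\},\{l,k\}$ is forced. Whether this move is legal depends on chords far from $i,j,k,l$, so swaps are not ``local'' in the sense your Step~1 suggests, and nothing about $Y$ controls those chords.

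Concretely, label the sides $1,\dots,10$ clockwise and take $X=\{\{1,2\},\{3,10\},\{4,9\},\{5,6\},\{7,8\}\}$ and $Y=\{\{1,6\},\{2,3\},\{4,5\},\{7,8\},\{9,10\}\}$. Both are non-crossing. At stage $1$ the only swap creating $\{1,6\}$ is $\{1,2\},\{5,6\}\to\{1,6\},\{2,5\}$. Both new chords cross $\{3,10\}$ and $\{4,9\}$, so this move is not available. In general, when $i<l<k<j$, the swap is legal exactly when no current chord joins the gap between $l$ and $k$ to the outside of $[i,j]$; any such chords have to be moved first.

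Once a stage can cost many swaps, your length bound $n$ and the injectivity of the Step~3 encoding (which you had already left open) must both be redone. So the count $n\cdot n^2\cdot n$ is unsupported. Deferring to Cohen's Proposition~2.11 ``once its hypotheses are checked'' does not fill the gap. The paper gives no argument of its own here and simply imports the bound, so your proof has to stand on the flow you build.

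\textbf{A route that works.} Your fallback idea of comparison succeeds if you compare with the adjacent-flip chain on Dyck paths rather than building flows. Number the sides $1,\dots,2n$ clockwise and record side $s$ as an up-step if its partner is larger than $s$, and a down-step otherwise. This is the usual bijection between non-crossing diagrams and Dyck paths of length $2n$.

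Flipping a peak $UD\to DU$ at positions $t,t+1$ replaces $\{a,b\},\{t,t+1\}$ by $\{a,t\},\{t+1,b\}$, where $\{a,b\}$ is the innermost chord enclosing $\{t,t+1\}$. That chord exists exactly when the flip keeps the path non-negative. So each peak flip is a single genus-preserving chord swap, and valley flips are the reverse moves.

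The lazy flip chain makes each such move with probability $\frac{1}{2(2n-1)}$, and your chain makes it with probability $\frac{1}{4\binom{n}{2}}$. Both chains are uniform at stationarity, so $\mathcal{E}_{\mathrm{swap}}\geq \frac{2}{n}\mathcal{E}_{\mathrm{Dyck}}$ and hence $\gamma_{n,0}\geq \frac{2}{n}\gamma_{\mathrm{Dyck}}$.

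Finally, Wilson's argument gives $\gamma_{\mathrm{Dyck}}\geq \frac{1-\cos(\pi/2n)}{2n-1}\asymp n^{-3}$. Under the monotone heat-bath coupling, the weighted distance $\sum_x (h'(x)-h(x))\sin(\pi x/2n)$ contracts at that rate; the non-negativity constraint only ever pushes the lower path up, so it cannot hurt. Path coupling then extends the contraction to all pairs, and the Chen--Ollivier bound turns a contraction rate into a spectral gap lower bound for a reversible chain. This yields $\gamma_{n,0}\geq Cn^{-4}$, with $n^{-3}$ from Dyck relaxation and $n^{-1}$ from the rate loss, and needs no encoding at all.
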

There is a large discrepancy between Cohen's $n^{-4}$ spectral gap lower bound for genus $0$ and \cref{thm:spectral_gap_lower_bound}'s $n^{-872}$ spectral gap lower bound for genus $1$, and we do not believe that the coefficient in front of $g$ in the exponent in \cref{thm:spectral_gap_lower_bound} is tight. In part, this is because we did not try to optimize it; but even if we had, we do not believe the methods used in the proof (restriction-projection; see \cref{sec:spectral_gap_lower_bound}) could produce optimal constants. In fact, even for genus $0$, the asymptotics of $\gamma_{n,0}$ are not known: the best upper bound we know of, due to private correspondence with Alessandra Caraceni, is
\begin{equation*}
	\gamma_{n,0} \leq C n^{-5/2} \, .
\end{equation*} 
The bound is obtained by observing how a chord swap affects the height function $f$ of the plane tree corresponding to the genus $0$ chord diagram: a single chord swap can decrease the height by at most $1$, and this can only occur when a swapped chord is on the path to the furthest leaf. The Dirichlet form of $f$ is therefore proportional to $\mathbb{E}_\pi[f] \cdot 1/n^2$. Since $\mathbb{E}_\pi[f] \asymp \sqrt{n}$ and since $\mathrm{Var}(f) \asymp n$ \cite[Proposition VII.16]{flajolet_sedgewick_analytic_combinatorics}, the Rayleigh quotient is therefore of the order 
\begin{equation*}
	\mathcal{R}(f) = \frac{\mathcal{E}(f)}{\mathrm{Var}(f)} \asymp \frac{\sqrt{n}/n^2}{n} = n^{-5/2} \, .
\end{equation*}
Finding the exact asymptotics of $\gamma_{n,g}$ for general $n$ and $g$ therefore currently seems out of reach. In fact, while \cref{thm:spectral_gap_lower_bound} gives a polynomial bound for every fixed $g$, and with a leading constant also strongly depending on $g$, we currently do not see a reason why the spectral gap should have leading order of the form $n^{Cg}$. Could it be that the chain mixes in polynomial time irrespective of the genus?
\begin{question}
	Are there universal constants $C_1, C_2$ such that $\gamma_{n,g} \geq C_1 n^{C_2}$ for all $n > 2g$?
\end{question}

\subsection*{Notation and layout}
A universal constant $C$ is a constant whose value does not depend on $n$, $g$, or other specific qualities of the objects discussed. A constant $C_g$ depends only on $g$. In what follows, the value of constants such as $C$ or $C_g$ may change from line to line, and even within the same line itself.

We prove irreducibility in \cref{sec:irreducibility}, and the spectral gap lower bound in \cref{sec:spectral_gap_lower_bound}. 

\section{Irreducibility} \label{sec:irreducibility}
In this section we show that apart from one chord diagram of genus $g = n/2$, every two chord diagrams in $\chorddiagrams{n}{g}$ are connected by a sequence of chord swaps.

\begin{definition}
 	Let $n \geq 1$ be an integer. The \emph{Bolza diagram} on the $2n$-sided regular polygon is the chord diagram which matches together opposite sides. 
\end{definition}

This diagram is named after the Bolza surface, a compact hyperbolic surface obtained by gluing together opposite sides of an $8$-sided regular hyperbolic octagon. The Bolza diagram always has genus $g=\floor{n/2}$.

\begin{theorem}\label{thm:irredfixedgenus}
	Let $n\geq 3$. If $g < n/2$, the fixed genus Markov chain on $\chorddiagrams{n}{g}$	is irreducible. If $g=n/2$, then the chain has two classes: one containing just the Bolza diagram as an isolated point, and one containing all other possible diagrams (the case $n=2$ and $g=1$ is trivial and contains only the Bolza diagram).
\end{theorem}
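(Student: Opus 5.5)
The plan is to bring every diagram of $\chorddiagrams{n}{g}$ to a fixed canonical diagram using genus-preserving chord swaps, by induction on $n$. Throughout I would use the permutation encoding. Write $X$ as a fixed-point-free involution $\alpha$ on the $2n$ side-ends, and let $\sigma$ be the rotation of the polygon. Then $V$ is the number of cycles of a product of the form $\sigma\alpha$. A chord swap replaces $\alpha$ by $\alpha'=\tau\alpha\tau$ for a suitable transposition $\tau$, so the cycle count changes by $-2$, $0$ or $+2$. By \eqref{eq:euler_characteristic}, a swap therefore changes the genus by at most $1$.

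The canonical diagram $X^*_{n,g}$ has two parts:
\begin{itemize}
\item $g$ consecutive "handle blocks" of four sides $a,b,a,b$ (the pattern $(i,i+2),(i+1,i+3)$);
\item $n-2g$ "leaf" chords, each joining two adjacent sides $(j,j+1)$.
\end{itemize}

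\textbf{Key steps, in order.}

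(1) \emph{Removing and reinserting leaves.} A chord joining adjacent sides creates a degree-one vertex. Deleting it gives a diagram on $2n-2$ sides with the same genus. Conversely, a genus-preserving swap in the smaller polygon lifts to a genus-preserving swap in the larger one, because the inserted leaf contributes exactly one vertex and one edge no matter what happens elsewhere. So once every diagram with $n>2g$ is connected to one containing a leaf, induction reduces the problem to the smaller polygon.

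(2) \emph{Moving a leaf.} I would also show that a leaf can be slid to any position by swaps. Swapping a leaf $(j,j+1)$ with a neighbouring chord $(j+2,k)$ to get $(j+1,j+2),(j,k)$ is a cut-and-reglue that keeps the vertex count. This lets us normalise the leaf to the end of the polygon, so the lifted canonical forms agree.

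(3) \emph{Creating a leaf when $n>2g$.} By \eqref{eq:euler_characteristic}, $n>2g$ means $V\ge 2$. I would pick a vertex of smallest degree. If its degree is at least $2$, I would swap two chords incident to the corners of that vertex, chosen so that the vertex is split into a degree-one vertex plus the remainder while two other vertices merge. This keeps $V$, and hence the genus, fixed. Iterating strictly decreases the minimum degree until a leaf appears.

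(4) \emph{Base cases.} The base cases are $n=2g+1$, and the diagrams of $\chorddiagrams{2g}{g}$ other than the Bolza diagram, which are the one-vertex unicellular maps. In step (3) the last swap merges vertices, so I need a second vertex. When $V=1$ I would instead show directly that a non-Bolza diagram admits a swap that keeps $V=1$ and increases the number of adjacent "$a,b,a,b$" handle blocks. This connects it to $X^*_{2g,g}$, which is the concatenation of such blocks.

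(5) \emph{The Bolza diagram is isolated.} For the Bolza diagram I would check directly that every swap of two opposite chords produces a $\sigma\alpha'$ with three cycles, which drops the genus. Since the chain is symmetric, no other diagram can move to the Bolza diagram either. The case $n=2$, $g=1$ consists of the Bolza diagram alone and is immediate.

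\textbf{Main obstacle.} I expect the hardest step to be (3), together with its maximal-genus analogue in (4). It requires a careful local case analysis of how the two swap choices reroute the boundary walk around a vertex. One must also certify that one of the two choices achieves the desired split-and-merge, and that the merged vertices are genuinely distinct. I would first analyse the corner sequence of $\sigma\alpha$ at the chosen vertex and show that swapping its first and second incident chords splits off exactly one corner. For $V=1$, I would treat the diagram as a one-face, one-vertex map and use that two interlaced chords not already forming a block can be brought adjacent by a swap preserving interlacement. The Bolza diagram is exactly the configuration in which every pair of chords interlaces "antipodally", so no such move exists.
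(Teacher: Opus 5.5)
Your overall architecture matches the paper's: the permutation $\sigma\alpha$ (the gluing graph), stripping leaves and contracting crossing length-$2$ pairs inductively, a canonical target, and the Bolza diagram isolated. But the proposal breaks down exactly where the real difficulty lies, and one claim is false.

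Step (4) claims that every non-Bolza diagram in $\chorddiagrams{2g}{g}$ has a single genus-preserving swap increasing the number of handle blocks. Take $n=6$, $g=3$: the block $\{1,3\},\{2,4\}$ together with the Bolza diagram $\{5,9\},\{6,10\},\{7,11\},\{8,12\}$ on the other eight sides.
\begin{itemize}
\item Contracting the block does not change the number of vertices, and the Bolza diagram on $8$ sides is isolated, so any swap of two Bolza chords lowers the genus.
\item Swapping the two block chords creates a leaf.
\item Swapping one block chord with a Bolza chord leaves at most three chords of length $2$, hence at most one block.
\end{itemize}
More generally, if there is no chord of length $2$, no single swap creates a block: a block formed by the two new chords forces a leaf among the old ones.

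So the Bolza obstruction is not just the one isolated point. It reappears as a trap whenever what remains after contracting blocks, or after deleting a leaf, is a smaller Bolza diagram, and any escape must first destroy progress already made. The same trap defeats your ``base case'' $n=2g+1$, for which you give no argument. There $V=2$, so a diagram has at most one leaf. For a leaf next to a Bolza diagram (e.g.\ $\{1,2\}$ with $\{3,7\},\{4,8\},\{5,9\},\{6,10\}$), sliding the leaf never changes the reduced diagram, so every way out must destroy the leaf.

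The paper needs three separate ingredients here:
\begin{itemize}
\item the $\Delta$-counting argument giving one swap that creates a chord of length $2$, followed by repeated shortening of a chord with an endpoint under it to obtain a crossing pair;
\item the ``reservoir'' move $\{A,B\},\{E,F\}\to\{A,E\},\{B,F\}$, which breaks a crossing pair sitting next to a Bolza remainder and rebuilds it so the remainder is no longer Bolza;
\item two explicit swaps turning ``leaf plus Bolza'' into a crossing pair of length-$2$ chords.
\end{itemize}

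Step (3) also has a hole. The only swap producing the leaf $\{c-1,c\}$ is the one on the chords through sides $c-1$ and $c$. It preserves the genus if and only if the polygon vertices $c-1$ and $c+1$ lie in different vertices of the glued surface; this is the paper's ``three consecutive vertices'' criterion. Suppose $V=2$ and the two vertices alternate around the polygon, as for $\{1,4\},\{2,5\},\{3,6\}$ on the hexagon. Then no such swap exists, and your prescription (split off a degree-one vertex while two \emph{other} vertices merge) cannot be carried out, since there are only two vertices. One first needs a genus-preserving swap that is not a leaf creation. The paper observes that in the alternating case every chord has odd length, and shortens the shortest chord by one, destroying the alternation. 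Your ``decrease the minimum degree'' iteration might be repaired along these lines, but as written nothing guarantees a suitable swap at a minimum-degree vertex.

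Finally, to reach a single target $X^*_{2g,g}$ you also need to connect the four rotations of the all-block diagram on $4g$ sides. The paper does this with its rotation argument for canonical diagrams.
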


We will prove this theorem by showing that apart from the Bolza diagram for $g=n/2$, every chord diagram has a sequence of chord swaps which ``simplifies'' it, in the following sense.
\begin{definition}
	The length of a chord connecting sides $A$ and $B$ is $1$ plus the number of sides between $A$ and $B$ (the smallest of the two possibilities). A \emph{canonical} chord diagram is one where each chord is either of length $1$, or is of length $2$ and intersects another chord of length $2$. 		
\end{definition}
A canonical diagram of genus $g$ has $g$ pairs of length-$2$ intersecting chords, and $n-2g$ chords of length $1$. In particular, when $n$ is even and $g=n/2$, there are only $4$ canonical diagrams, all rotations of each other; see \cref{fig:canonical_n2_genus}.	
\begin{figure}[H] 
	\centering
	\captionsetup{width=.8\linewidth}
	\includegraphics[width=0.9\textwidth]{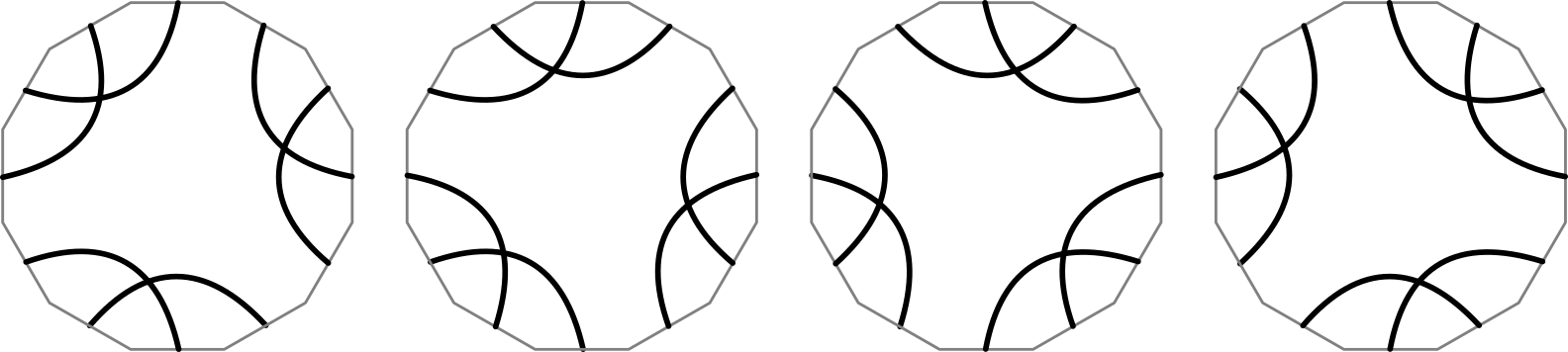}
	\caption{\captionstyle{The four canonical diagrams for genus $g = n/2$.}}
	\label{fig:canonical_n2_genus}
\end{figure}	

In order to understand how a chord swap affects the genus of a diagram, we introduce the following graph.
\begin{definition}
	Let $P$ be a $2n$-gon whose vertices are labelled from $1$ to $2n$ in a clockwise manner, and let $X$ be a chord diagram on $P$. The \textit{gluing graph} of $X$, denoted $G(X)$, is a directed graph whose vertices are the vertices of $P$ and whose edges are
	\begin{equation*}
		E = \bigcup_{\{\{x,x+1\}, \{y,y+1\}\} \in X } \{ (x, y+1), (y, x+1) \} \, ,
	\end{equation*}
	where the vertices $1$ and $2n+1$ are equivalent.
\end{definition}
\begin{figure}[H] 
	\centering
	\captionsetup{width=.8\linewidth}
	\includegraphics[width=0.7\textwidth]{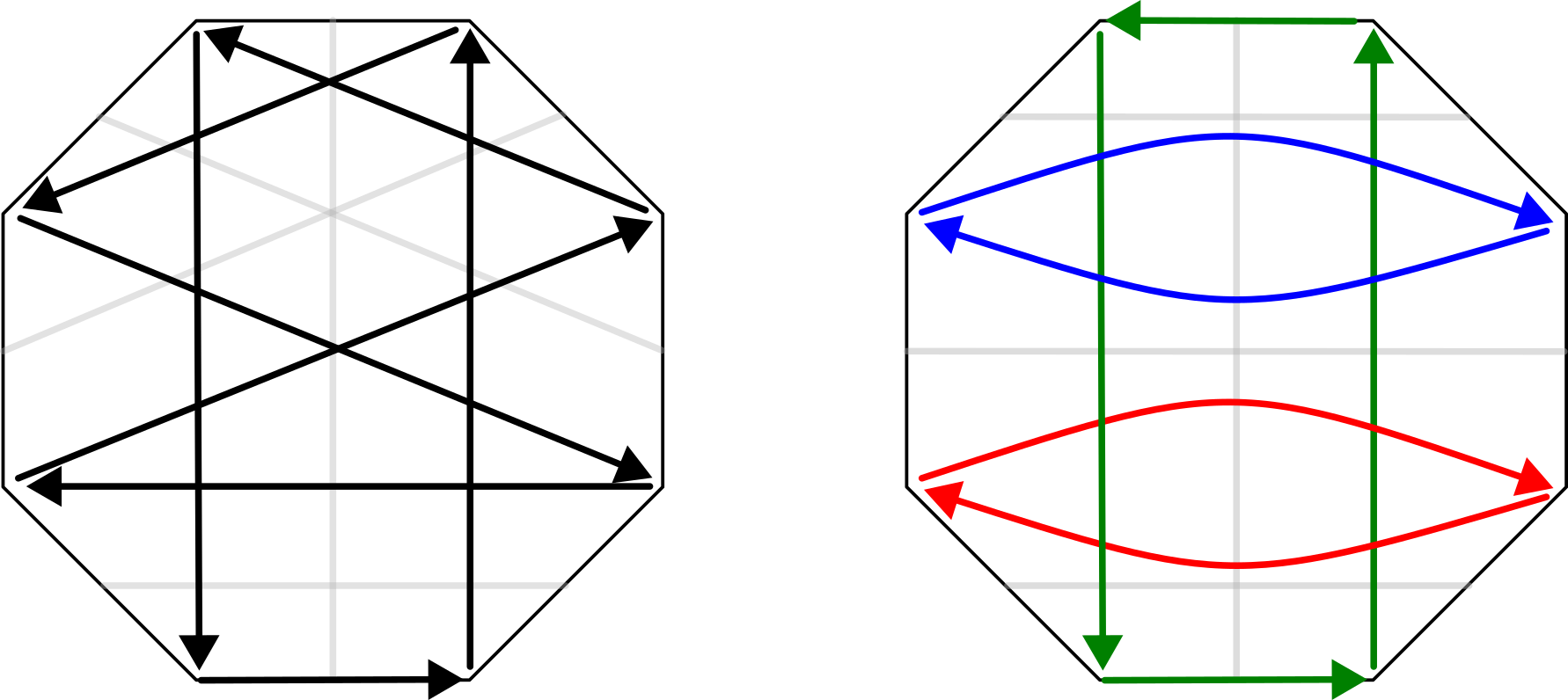}	
	\caption{\captionstyle{Two examples of gluing graphs on the octagon. The underlying chords are shown in grey. The graph on the left has one cycle, and so its diagram is of genus $2$, while the graph on the right has three components, and its diagram has genus $1$.}}
	\label{fig:gluing_graph_examples}
\end{figure}
Every vertex in the gluing graph of $X$ has in-degree and out-degree both equal to one - it is a collection of directed cycles (see \cref{fig:gluing_graph_examples}). The number of components of $G(X)$ is equal to the number of vertices that remain after identifying the edges matched in the chord diagram, and so by \eqref{eq:euler_characteristic}, $X$ has genus $g$ if and only if the number of components in $G(X)$ is $n+1-2g$. Thus, two diagrams $X$ and $Y$ with $n$ chords have the same genus if and only if $G(X)$ and $G(Y)$ have the same number of components. Given this, we can now show the following.
\begin{claim} \label{claim:canonical_can_reach}
	All canonical diagrams with $n$ chords of the same genus can be reached from one another by a finite sequence of genus-preserving chord swaps.
\end{claim}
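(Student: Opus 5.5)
A canonical diagram of genus $g$ on $2n$ sides is determined by a cyclic word in two letters. Read around the polygon, the diagram decomposes into $g$ ``blocks'' of four consecutive sides, where a block is a pair of crossing length-$2$ chords $\{a,a+2\},\{a+1,a+3\}$, and $n-2g$ ``units'' of two consecutive sides joined by a length-$1$ chord. We record the diagram as a cyclic arrangement of $g$ letters $B$ and $n-2g$ letters $U$, together with a rotation offset. Any two such arrangements are related by a sequence of moves of two kinds. The first kind exchanges an adjacent pair $BU \leftrightarrow UB$, i.e.\ it slides a length-$1$ chord past a crossing pair. The second kind rotates the offset by one side; this is only needed when $n=2g$, since otherwise rotations are generated by $BU\leftrightarrow UB$ moves together with the trivial rotation of a $UU$ pair. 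It therefore suffices to realize each elementary move by a short explicit sequence of genus-preserving chord swaps. Genus preservation is checked through the gluing graph: a swap is allowed precisely when $G$ has the same number of components before and after, i.e.\ $n+1-2g$.

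\textbf{Step 1 (sliding a unit past a block).} Label the six sides $1,\dots,6$ with chords $\{1,3\},\{2,4\},\{5,6\}$. The target is $\{1,2\},\{3,5\},\{4,6\}$. First swap $\{1,3\},\{5,6\}$ into $\{1,6\},\{3,5\}$. Then swap $\{2,4\},\{1,6\}$ into $\{1,2\},\{4,6\}$. At each stage we compute the local gluing graph on the seven vertices $1,\dots,7$. Vertices outside the window are unaffected, since each swap only rewires edges among the endpoints of the involved sides. We then count cycles. The intermediate diagram $\{1,6\},\{2,4\},\{3,5\}$ should have the same local cycle structure as the start, namely one genus-$1$ handle plus one extra boundary vertex. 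If it does not, we try the other choice of swap at one of the stages; among the few three-chord diagrams on six consecutive sides with the correct genus there should be a path, and this finite check can be done by hand.

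\textbf{Step 2 (the case $n=2g$).} The only case left is when there are no units, so the diagram is a cyclic sequence of blocks. Here we must connect the four rotations shown in \cref{fig:canonical_n2_genus}. Again this is a local computation on eight consecutive sides containing two blocks. Applying the swap $\{1,3\},\{6,8\}$ (and similar ones) changes the diagram to one of the same genus. We aim for a sequence of three or four such swaps that shifts both blocks by one side. Since $n\geq 3$, at least two blocks exist when $n=2g$, so this local shift is available. The diagram in the middle of the sequence need not be canonical.

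\textbf{Expected obstacle.} The main obstacle is Step 2. In maximal genus the gluing graph has a single cycle, so every intermediate diagram must also have a one-cycle gluing graph. The Bolza diagram shows that some maximal-genus diagrams admit no swaps at all. We must therefore find a concrete path among genus-$n/2$ local configurations by explicit computation. If the path on eight sides fails, we would enlarge the window to three blocks.
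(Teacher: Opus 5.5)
Your Step 1 is correct. Use the gluing edges $(x,y+1),(y,x+1)$ for a chord $\{x,y\}$. In each of $\{1,3\},\{2,4\},\{5,6\}$, then $\{1,6\},\{2,4\},\{3,5\}$, then $\{1,2\},\{3,5\},\{4,6\}$, the window of sides $1,\dots,6$ contributes a path from vertex $1$ to vertex $7$ plus exactly one closed cycle, so both swaps preserve the genus.

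The gap is in your reduction to elementary moves. Every tile (block or unit) of a canonical diagram has an even number of sides, so all tile boundaries have the same parity. A $BU\leftrightarrow UB$ move, or any re-matching among units, preserves this parity, whereas rotating by one side flips it. So the one-side rotation is needed for every $g$, not only when $n=2g$. For instance, with $n=3$, $g=1$ your moves split the six canonical diagrams into two classes of three. Your ``trivial rotation of a $UU$ pair'' is not a canonical-to-canonical move: shifting the units on sides $1,\dots,4$ by one side needs side $5$ or side $2n$. Moreover, there is no $UU$ pair at all when $n-2g=1$.

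This missing move cannot be produced by any local gadget, which is also why Step 2 fails once $n=2g\geq 6$. Suppose two canonical diagrams have opposite parity. Then no unit chord of the first survives in the second. Also, no block $\{a,a+2\},\{a+1,a+3\}$ keeps both its chords: the new block containing $\{a,a+2\}$ must start at $a-1$, so side $a+1$ would be matched to both $a-1$ and $a+3$. Hence any connecting sequence alters at least one chord in each of the $n-g$ tiles, so it necessarily runs around the whole polygon. In particular, shifting the two blocks on sides $1,\dots,8$ by one side collides with the neighbouring block and leaves a side unmatched. The endpoint of your local sequence is therefore not canonical unless $n=4$, and enlarging the window to three blocks does not help.

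The paper supplies exactly this global step. Once the blocks are consecutive (via your Step 1), it takes the length-$2$ chord next to a length-$1$ chord, or any length-$2$ chord if $n=2g$. It slides that chord's inner endpoint along the entire run of blocks, so that every block shifts by one side. It then re-matches the remaining non-crossing chords into length-$1$ chords by genus-$0$ swaps.
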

\begin{proof}	
	As shown in \cref{fig:length_one_shifts}, a length $1$ chord and an adjacent pair of intersecting length $2$ chords may swap positions, so any canonical diagram can reach a diagram where all length $2$ chords are consecutive; all that needs to be shown is that such diagrams can be rotated. If the diagram contains a length $1$ chord, this is done by starting with the length $2$ chord which neighbours a length $1$ chord, and moving its internal side along the ``block'' of length $2$ chords until it reaches the other side. This shifts the length $2$ chords by $1$ side; the remaining chords are non-intersecting and can be rearranged so that all are of length $1$. See \cref{fig:chord_rotations} for an example. If the diagram does not contain length $1$ chords, the same process works by starting from any chord. 
	\begin{figure}[h!] 
		\centering
		\includegraphics[width=0.75\textwidth]{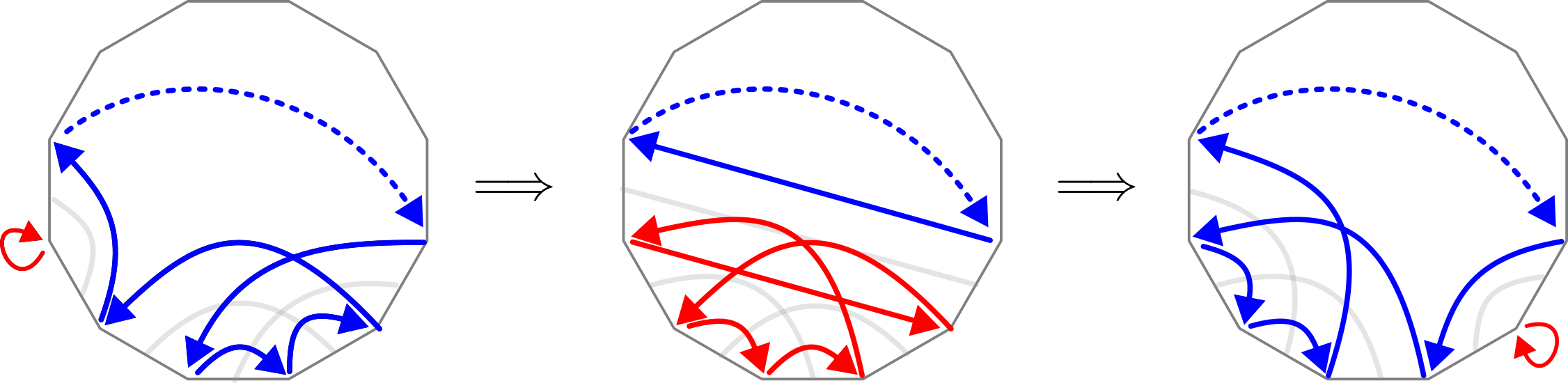}
		\captionsetup{width=.8\linewidth}
		\caption{\captionstyle{It is always possible to swap the position of a length $1$ chord and a pair of intersecting length $2$ chords without changing the genus. The chords are shown in grey, while the gluing graph, which always maintains the same number of components, is shown in colour. The dashed line indicates that the cycle of the gluing graph is connected in some unspecified way.}}
		\label{fig:length_one_shifts}
	\end{figure}	
	\begin{figure}[h!] 
		\centering
		\includegraphics[width=0.9\textwidth]{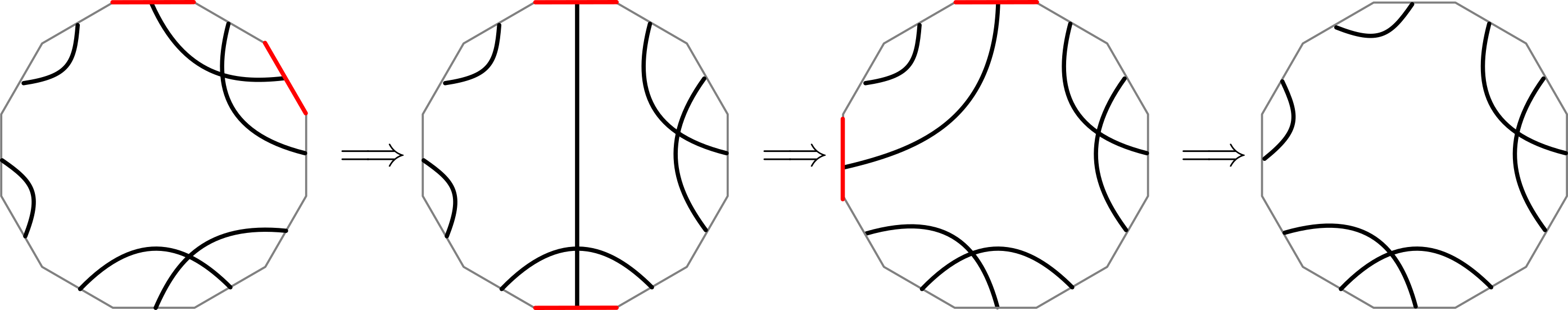}
		\captionsetup{width=.8\linewidth}
		\caption{\captionstyle{It is always possible to rotate a canonical diagram whose length $2$ chords are all consecutive}}
		\label{fig:chord_rotations}
	\end{figure}	
\end{proof}

Given the above claim, to prove \cref{thm:irredfixedgenus}, it suffices to show that any diagram can reach a canonical diagram (apart from the Bolza diagram for $g=n/2$). The analysis depends on the genus.

If $X$ has genus $g=n/2$, then $G(X)$ is just a single directed cycle. In this case, the gluing graph induces a cyclic order on its vertices. For three vertices $a,b,c$, we write $a < b <c $ to mean that when the cycle is traversed starting at $a$, the order of the three vertices seen is $a,b,c$. The vertex that follows $a$ in this cyclic ordering is denoted $a+1$.

The following claim characterizes when a chord swap is possible for genus $g=n/2$.
\begin{claim} \label{claim:when_can_you_swap}
	Let $X \in \chorddiagrams{n}{n/2}$ be a chord diagram, let $\{A,B\}$ and $\{C,D\}$ be two chords, and denote the anticlockwise-most vertices of the sides $A,B,C,D$ by $a,b,c,d$ respectively as in \cref{fig:two_edges_and_labelings} (note that it does not matter if $\{A,B\}$ and $\{C,D\}$ cross or not). The chords $\{A,B\}$ and $\{C,D\}$ may be swapped if and only if the cyclic ordering of the vertices $a,b,c,d,$ in the cycle $G(X)$ is one of the following four:
	\begin{align*}
		&a < b < c < d \\
		&a < b < d < c \\
		&a < c < d < b \\
		&a < d < c < b \, .
	\end{align*}
	\begin{figure}[H] 
		\centering
		\includegraphics[width=0.3\textwidth]{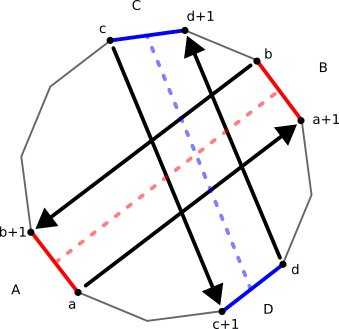}
		\captionsetup{width=.8\linewidth}
		\caption{\captionstyle{The sides $A$ and $B$ are connected by a chord, as well as the sides $C$ and $D$ (the chords are shown in dashed red and blue, respectively). The corresponding directed edges in the gluing graph are drawn in black. We label the anticlockwise-most vertices of the sides $A,B,C,D$ by $a,b,c,d$ respectively. Note that it is possible that some of the vertex labels and their successors coincide, e.g. $a+1=d$. When $g = n/2$, all four vertices are in a single cycle in the gluing graph, and there are six possible cyclic orderings for them.}}
		\label{fig:two_edges_and_labelings}
	\end{figure}	
\end{claim}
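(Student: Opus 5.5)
The plan is to read the effect of a swap directly off the single directed cycle $G(X)$. By the discussion preceding \cref{claim:canonical_can_reach}, a diagram with $n$ chords has genus $n/2$ if and only if its gluing graph has exactly one component. So a swap preserves the genus exactly when the gluing graph of the new diagram is again a single directed cycle. I interpret ``may be swapped'' as: at least one of the two possible chord swaps of $\{A,B\},\{C,D\}$ preserves the genus.

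\textbf{Setup.} The chord $\{A,B\}$ contributes the edges $(a,b+1)$ and $(b,a+1)$, and $\{C,D\}$ contributes $(c,d+1)$ and $(d,c+1)$. Here $+1$ denotes the polygon labelling; I will write $\mathrm{next}(\cdot)$ for the successor among $\{a,b,c,d\}$ in the cyclic order of $G(X)$, to avoid a clash with the notation $a+1$ used for the cycle successor.
\begin{enumerate}
\item Remove the four out-edges of $a,b,c,d$. The cycle $G(X)$ breaks into four directed paths.
\item Each path starts at the head of a removed edge and ends at the next element of $\{a,b,c,d\}$ along the cycle. Concretely, for each $p\in\{a,b,c,d\}$ with partner $p'$ (so the removed edge is $p\to p'+1$), the path starting at $p'+1$ ends at $\mathrm{next}(p)$.
\item Reindex: for $w\in\{a,b,c,d\}$, the path starting at $w+1$ ends at $\mathrm{next}(w')$, where $w'$ is the partner of $w$ in $X$.
\item All other edges of $G$ are unchanged by either swap.
\end{enumerate}

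\textbf{Reduction to a permutation.} A swap replaces the four removed edges by $u\to \sigma(u)+1$, where $\sigma$ is the new pairing: $a\leftrightarrow c,\ b\leftrightarrow d$ for the first swap, and $a\leftrightarrow d,\ b\leftrightarrow c$ for the second. Following a new edge from $u$ and then the path starting at $\sigma(u)+1$ lands at $\tau(u)=\mathrm{next}(\sigma(u)')$, where $'$ is the old pairing $a\leftrightarrow b$, $c\leftrightarrow d$. Every cycle of the new gluing graph passes through at least one of $a,b,c,d$. Hence the number of components of the new gluing graph equals the number of cycles of the permutation $\tau$ on $\{a,b,c,d\}$. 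The swap preserves genus if and only if $\tau$ is a $4$-cycle.

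\textbf{Case check.} Fix $a$ first in the cyclic order. There are $3!=6$ cyclic orderings, and for each I compute $\tau$ under both swaps, giving $12$ small computations. Take the ordering $a<b<c<d$ with the first swap. Then
\[
\tau(a)=\mathrm{next}(c')=\mathrm{next}(d)=a,
\]
so $\tau$ has a fixed point and this swap fails. With the second swap,
\[
a\mapsto \mathrm{next}(d')=\mathrm{next}(c)=d,\qquad d\mapsto \mathrm{next}(a')=\mathrm{next}(b)=c,
\]
\[
c\mapsto \mathrm{next}(b')=\mathrm{next}(a)=b,\qquad b\mapsto \mathrm{next}(c')=\mathrm{next}(d)=a,
\]
which is a $4$-cycle, so this swap succeeds. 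I expect the table to show the following. In the four listed orderings, where $a,b$ are adjacent and $c,d$ are adjacent in the cyclic order, some swap gives a $4$-cycle. In the two interleaved orderings $a<c<b<d$ and $a<d<b<c$, both swaps give $\tau$ with at least two cycles.

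\textbf{Main obstacle.} The only real difficulty is bookkeeping. The two notions of ``$+1$'' (polygon versus cycle) must be kept apart. Degenerate coincidences such as $a+1=d$ (\cref{fig:two_edges_and_labelings}) need care: they only make some of the four paths trivial, that is, single vertices, which does not affect the permutation argument. The argument also never uses whether the chords cross.
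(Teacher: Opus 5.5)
Your proposal is correct and takes essentially the same route as the paper. The paper also checks the six cyclic orderings of $a,b,c,d$ and counts components of the gluing graph after each of the two swaps, and your permutation $\tau$ is a clean algebraic encoding of that pictorial case analysis: it equals $\mathrm{next}\circ(a\,d)(b\,c)$ for the swap to $\{A,C\},\{B,D\}$ and $\mathrm{next}\circ(a\,c)(b\,d)$ for the swap to $\{A,D\},\{B,C\}$. The eleven cases you left as expectations do check out: in each listed ordering exactly one swap gives a $4$-cycle, while for $a<c<b<d$ and $a<d<b<c$ both swaps give a transposition plus two fixed points, i.e.\ three components.
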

\begin{proof}
	\begin{figure}[p] 
		\centering
		\includegraphics[width=\textwidth, height=0.8\textheight, keepaspectratio]{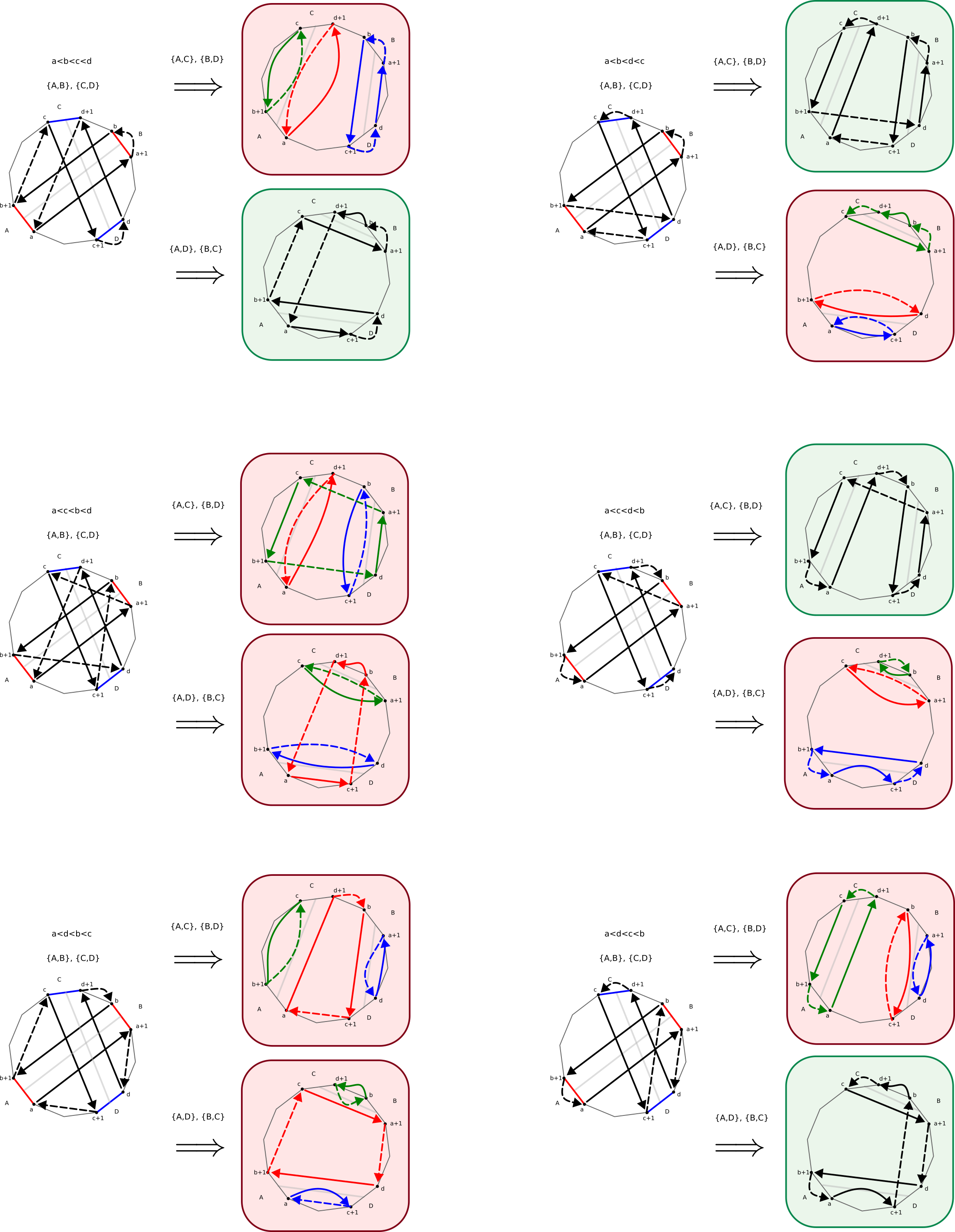}
		\caption{\captionstyle{Whether or not a chord swap preserves the genus depends on the ordering of the endpoints of the matched sides. In each column, the left-hand image shows the gluing graph, based on the ordering written above it. The solid black lines are the edges in the gluing graph due to the sides $A,B,C,D$, while the dashed black lines are unspecified and just show the ordering. The two images to the right are the results of the two possible swaps between the chords $\{A,B\}$  and $\{C,D\}$. The swaps which preserve the cycle structure are highlighted in green, while those which create additional cycles are highlighted in red.}}
		\label{fig:when_can_you_swap}
	\end{figure}
	A chord swap preserves the genus only if it does not change the number of components in $G(X)$. A simple case analysis shows that of the six possible cyclic orderings of the vertices $a,b,c,d$ in $G(X)$, only the orderings $a < c < b < d$ and $a < d < b < c$ do not allow the chords to be swapped; see \cref{fig:when_can_you_swap}. 	
\end{proof}

\begin{claim} \label{claim:can_create_length_2}
	Let $X \in \chorddiagrams{n}{n/2}$. If $X$ is not the Bolza diagram and does not have a chord of length $2$, then there is a genus-preserving chord swap which creates a chord of length $2$. 
\end{claim}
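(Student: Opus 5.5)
Every chord of $X$ must have length at least $3$, and the plan is to exploit this. A chord of length $1$ matching $\{x,x+1\}$ with $\{x+1,x+2\}$ contributes the loop $(x+1,x+1)$ to $G(X)$. That loop is a separate component, which is impossible since $G(X)$ is a single cycle when $g=n/2$ and $n\geq 3$. Length $2$ is excluded by hypothesis.

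Fix a side $A=\{a,a+1\}$ and let $C=\{a+2,a+3\}$ be the side two steps clockwise, so that the side $M=\{a+1,a+2\}$ lies between them. Let $A$ be matched to $B$ and $C$ to $D$. Since $A$ and $C$ are not matched to each other, $\{A,B\}$ and $\{C,D\}$ are distinct chords. Any swap of them producing the chord $\{A,C\}$ creates a chord of length $2$.

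I will use \cref{claim:when_can_you_swap} in its refined form, reading off from \cref{fig:when_can_you_swap} which of the two swaps is genus preserving in each allowed ordering. The first step is to determine, for each of the four allowed cyclic orderings of $a,b,c,d$ in $G(X)$, whether the swap $\{A,C\},\{B,D\}$ is the one that keeps a single cycle. My expectation is that this holds exactly when $c$ does not lie between $a$ and $b$ (or symmetrically, between $b$ and $a$) in the cyclic order; in other words, the failure condition is an interleaving condition on the four vertices. So if no such swap exists for any choice of $A$, then for every side $i$, the vertex pairs coming from the chords at sides $i$ and $i+2$ interleave in the cyclic order of $G(X)$. Local information is available for this: the chord at $M$ contributes edges into $a+2$ and out of $a+1$, which relates the cyclic positions of $a+1$ and $a+2$ to the partner of $M$.

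The main step, and the one I expect to be the obstacle, is to show that this interleaving condition holding for all $i$ simultaneously forces $X$ to be the Bolza diagram. My plan is to encode it as a statement about the permutation $\sigma$ sending each side to its partner. I will follow how $G(X)$ traverses consecutive sides: the successor of vertex $x$ is $\sigma(\text{side } x)$ shifted by one. Then I will show that the interleaving conditions for $i$ and $i+1$ together force $\sigma(i+1)=\sigma(i)+1$, i.e.\ the chords are ``parallel''. Iterating this around the polygon makes $\sigma$ a rotation $i\mapsto i+k$. Being an involution without fixed points, $\sigma$ must then be $i\mapsto i+n$, which is the Bolza diagram, a contradiction.

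If the propagation argument proves too delicate, my fallback is a case analysis on the minimal chord length $\ell\geq 3$. Take a chord $\{A,B\}$ of minimal length with $\ell<n$; such a chord exists since $X$ is not Bolza. Then test the swaps with the side $C$ lying two steps from $A$ inside the short arc between $A$ and $B$. Minimality of $\ell$ constrains where $D$ can be, and the task is to check directly against the orderings of \cref{claim:when_can_you_swap} that one of the two candidate sides $C$ yields an allowed ordering.
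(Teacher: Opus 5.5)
Your reduction is sound: it suffices to show that if, for every $i$, the swap creating the chord between sides $i$ and $i+2$ fails, then $X$ is Bolza. But there is a genuine gap. The step you identify as the main one is false, and your fallback fails on the same example.

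A small correction first. The swap $\{A,B\},\{C,D\}\to\{A,C\},\{B,D\}$ keeps a single cycle exactly when the cyclic order is $a<c<d<b$ or $a<b<d<c$, i.e.\ when $\{a,d\}$ separates $\{b,c\}$. This is not ``$c$ does not lie between $a$ and $b$''.

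Now the counterexample. Label the sides of the $12$-gon $0,\dots,11$, with side $i$ joining vertices $i$ and $i+1$, and take the chords $\{0,4\},\{1,7\},\{2,8\},\{3,9\},\{5,10\},\{6,11\}$.
\begin{itemize}
\item The gluing graph is the single cycle $0\to5\to11\to7\to2\to9\to4\to1\to8\to3\to10\to6\to0$. So the genus is $3=n/2$, every chord has length at least $4$, and the diagram is not Bolza.
\item For the chords at sides $0$ and $2$ the cyclic order is $0<2<4<8$; for those at sides $1$ and $3$ it is $1<3<7<9$. Both are the forbidden pattern $a<c<b<d$ of \cref{claim:when_can_you_swap}, so none of $\{0,2\}$, $\{2,4\}$, $\{1,3\}$ can be created.
\item Yet $\sigma(1)=7\neq 5=\sigma(0)+1$. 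So two consecutive failures do not force parallel chords.
\item In fact the swaps at $i=10,11,0,1,2,3,4$ all fail, while the chord pairs at $(11,0)$, $(0,1)$, $(3,4)$, $(4,5)$ are not parallel. Many consecutive failures still do not force parallelism, so the propagation cannot get started.
\item The fallback breaks too. $\{0,4\}$ is the unique shortest chord, and the only side two steps inside its short arc, from either end, is side $2$. Its chord $\{2,8\}$ cannot be swapped with $\{0,4\}$ at all.
\item The claim itself does hold here: $\{5,10\}$ and $\{7,1\}$ occur in the order $5<7<1<10$, and swapping them creates $\{5,7\}$.
\end{itemize}

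What is missing is a global input, and that is exactly what the paper's proof supplies. Let $\Delta(v)$ be the number of steps in $G(X)$ from $v$ to its clockwise neighbour. Matched sides satisfy $\Delta(a)+\Delta(b)=2n+2$, so $\sum_v\Delta(v)=n(2n+2)$, and the consecutive sums $\Delta(v)+\Delta(v+1)$ average $2n+2$.

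Your parallelism condition $\sigma(i+1)=\sigma(i)+1$ is precisely the equality case $\Delta(i)+\Delta(i+1)=2n+2$. A failed swap at $i$ says only that the sums at $i$ and $i+1$ are both $\ge 2n+2$ or both $<2n+2$. In the example, the sums at $0,1,2$ are $16,14,14$.

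The paper's argument runs as follows.
\begin{itemize}
\item It finds consecutive vertices $x,y,z$ with $\Delta(x)+\Delta(y)\ge 2n+2>\Delta(y)+\Delta(z)$.
\item Such a straddling triple gives $x<z<w<y$, hence $a<c<d<b$, hence a valid swap.
\item If no such triple exists, pigeonhole and the sum identity force every consecutive sum to equal $2n+2$.
\item A computation modulo $2n$ then shows that every chord joins opposite sides, i.e.\ $X$ is Bolza.
\end{itemize}
You need an averaging argument of this kind in place of the local propagation.
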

\begin{proof}
	We will show that there are two chords, $\{A,B\}$ and $\{C,D\}$ where the sides $A$ and $C$ are separated by exactly one side, and such that the swap $\{A,B\}, \{C,D\} \to \{A,C\}, \{B,D\}$ preserves the genus. This swap creates the desired chord $\{A,C\}$ of length $2$.
	
	Let $\{A,B\}, \{C,D\} \in X$, and suppose that $A$ and $C$ are separated by another side $E$. Since $g = n/2$, both $B \neq E$ and $D \neq E$; labelling the anticlockwise-most endpoints of the sides $A,B,C,D$ by $a,b,c,d$ respectively, the situation is as in \cref{fig:four_consdecutive}.	
	\begin{figure}[h!] 
		\centering
		\includegraphics[width=60mm]{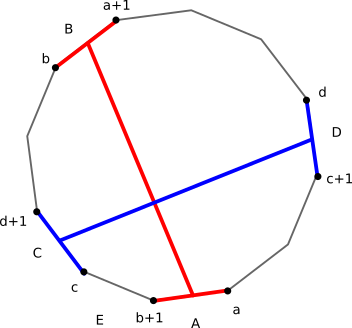}
		\captionsetup{width=.8\linewidth}
		\caption{\captionstyle{Can we find sides $A$ and $C$, which are separated by exactly one side $E$, such that $a < c < d < b$?}}
		\label{fig:four_consdecutive}
	\end{figure}	
	By \cref{claim:when_can_you_swap}, in order to make the swap $\{A,B\}, \{C,D\} \to \{A,C\}, \{B,D\}$, it suffices that the endpoints $a,b,c,d$ satisfy $a<c<d<b$ in the ordering induced by the gluing graph $G(X)$. The relation $a<c<d<b$ is equivalent to $a < c < d+1 < b+1$. Since $a,b+1,c,d+1$ are clockwise consecutive vertices on the polygon, our goal is reduced to finding four clockwise consecutive vertices $x,y,z,w$ on the polygon such that $x < z < w < y$ (we can then substitute $a=x$, $b+1=y$, $z=c$ and $d+1=w$). We now show that this is always possible. 	
	
	For a vertex $v$, let $\Delta(v)$ be the number of steps in the gluing graph required to get from $v$ to the vertex directly clockwise of $v$ on the polygon $P$. This is always bounded above by $2n-1$. Let $\{A,B\} \in X$ be two sides, and denote their right endpoints by $a,b$ respectively. Then $\Delta(a) + \Delta(b) = 2n+2$; indeed, in the gluing graph, getting from $a$ to the vertex clockwise from it requires first getting from $a$ to $b$ and then taking one more step, while getting from $b$ to the vertex clockwise from it requires first getting from $b$ to $a$ and then taking one more step. Since the cycle has length $2n$, the total number of steps is $2n+2$. Summing over all chords in the matching, we thus have
	\begin{equation} \label{eq:summing_delta}
		\sum_{v \in P} \Delta(v) = n(2n+2) \, .
	\end{equation}
	We claim that when $X$ is not the Bolza diagram, then there are three clockwise consecutive vertices $x,y,z$, such that $\Delta(x) + \Delta(y) \geq 2n+2$, but $\Delta(y) + \Delta(z) < 2n +2$. Arguing by contradiction, if there are no such clockwise consecutive triplets $x,y,z$, then whenever $\Delta(x) + \Delta(y) \geq 2n+2$, we must also have $\Delta(y) + \Delta(z) \geq 2n+2$. By \eqref{eq:summing_delta}, summing over clockwise consecutive vertices gives 
	\begin{equation} \label{eq:summing_pairs_of_differences}
		\sum_{x,y \in P, y\text{ clockwise to }x} \Delta(x) + \Delta(y) = 2n(2n+2) \, .
	\end{equation}
	Since there are $2n$ vertices, by the pigeonhole principle there exists a pair of clockwise consecutive vertices $x,y$ such that $\Delta(x) + \Delta(y) \geq 2n + 2$. By the contrapositive assumption, this implies that $\Delta(u) + \Delta(v) \geq 2n+2$ for \textbf{every} pair of clockwise consecutive vertices $u,v$; but by \eqref{eq:summing_pairs_of_differences} we cannot have any strict inequality, and in fact $\Delta(u) + \Delta(v) = 2n+2$ for every such pair. In particular, as we traverse clockwise along the vertices of the polygon, there is an integer $k$ such that the value of $\Delta(v)$ alternates between $k$ and $2n+2-k$. 
	
	Now, let $\{A,B\} \in X$ be two sides whose anticlockwise-most endpoints are $a$ and $b$, respectively. Let $a = x_1, x_2, \ldots, x_\ell  = b$ be the vertices on the polygon going from $a$ to $b$ in clockwise order. Consider the path in the gluing graphs that visits $x_1, \ldots, x_\ell$ in order, and then visits the vertex clockwise to $b$; this path has length $\sum_{i=1}^{\ell} \Delta(x_i)$. Since the vertex clockwise to $b$ is equal to $a+1$, and since any path from $a$ to $a+1$ in the gluing graph must consist of an integer number of cycles plus the one edge between $a$ and $a+1$, we have 
	\begin{equation*}
		\sum_{i=1}^{\ell} \Delta(x_i) = 1 \mod 2n \, .
	\end{equation*}
	Since, as seen above, every two clockwise consecutive vertices have $\Delta(u) + \Delta(v) = 2 \mod 2n$, $\ell$ must be odd, and by grouping together pairs of clockwise consecutive vertices starting from $x_2$, we have
	$$\frac{\ell-1}{2}\cdot 2 + \Delta(a) = 1 \mod 2n \, . $$
	Reversing the roles of $A$ and $B$, denoting by $m$ the clockwise distance from $b$ to $a$, and grouping together pairs of clockwise consecutive vertices starting from $b$ similarly gives 
	$$\frac{m-1}{2}\cdot 2 + \Delta(a) = 1 \mod 2n \, . $$
	This implies that the clockwise distance from $a$ to $b$ is the same as the clockwise distance from $b$ to $a$, which holds only when $A$ and $B$ are opposite sides of the polygon. As this holds true for every pair of matched sides, $X$ must be the Bolza diagram, and we arrive at a contradiction. There must therefore indeed exist three clockwise consecutive vertices $x,y,z$ with both $\Delta(x) + \Delta(y) \geq 2n+2$ and $\Delta(y) + \Delta(z) < 2n +2$.
	
	Letting $w$ be the vertex clockwise to $z$, we now show that $x < z < w < y$ under the gluing graph cyclic order. Without loss of generality, we may assign numbers $n_x, n_y, n_z, n_w$ indicating their position in the gluing graph in order to mark their ordering:
	\begin{align*}
		&n_x = 0 \\
		&n_y = \Delta(x) \\
		&n_z = \Delta(x) + \Delta(y) && \mod 2n \\
		&n_w = \Delta(x) + \Delta(y) + \Delta(z)&&\mod 2n \, .
	\end{align*}
	Having chosen $n_x=0$, showing that $x < z < w < y$ in the cyclic ordering of the gluing graph is the same as showing that $n_x < n_z < n_w < n_y$. Since $\Delta(x) + \Delta(y) \geq 2n$ and $\Delta(y) \leq 2n-1$, we in fact have $n_z = \Delta(x) + \Delta(y) - 2n \leq \Delta(x) + 2n -1 -2n < \Delta(x) = n_y$. As for $n_w$, since $\Delta(y) + \Delta(z) \leq 2n+1$, we have $n_w \leq \Delta(x) + 2n + 1 - 2n = n_y + 1$. But $n_w$ cannot equal $n_y+1$ - if it were so, then the side $\{y,z\}$ would be matched to the side $\{z,w\}$, and this is impossible in a genus $n/2$ chord diagram. Since it cannot also equal $n_y$, we must have $n_w < n_y$ as needed. Finally, observe that $n_w > n_z$, since $n_w$ is obtained from $n_z$ by adding a positive number but without an additional modulo operation.
\end{proof}

\begin{claim} \label{claim:can_cross_length_2}
	Let $X \in \chorddiagrams{n}{n/2}$. If $X$ is not the Bolza diagram and does not have a pair of chords of length $2$ which cross each other, then there is a finite sequence of genus-preserving chord swaps which creates such a pair.
\end{claim}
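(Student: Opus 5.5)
By \cref{claim:can_create_length_2}, we may assume that $X$ already contains a chord $\{A,C\}$ of length $2$. Let $E$ be the side between $A$ and $C$. Since $g=n/2$, $E$ is not matched to $A$ or $C$, and it is not matched to itself. So $E$ is matched to some side $F$ outside the arc $A,E,C$. This means the chord $\{E,F\}$ necessarily crosses $\{A,C\}$. The goal is to shorten $\{E,F\}$ to a length $2$ chord that still crosses $\{A,C\}$, by swapping $\{E,F\}$ with a chord $\{B,D\}$ where $B$ is the side adjacent to $A$ or to $C$ on the outside. Concretely, if $B$ is the side clockwise of $C$ and $\{B,D\}\in X$, I would attempt the swap $\{E,F\},\{B,D\}\to\{E,B\},\{F,D\}$. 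The new chord $\{E,B\}$ has length $2$ and crosses $\{A,C\}$. The symmetric option uses the side $B'$ anticlockwise of $A$, giving $\{E,B'\}$.

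The main step is to check, using \cref{claim:when_can_you_swap}, that at least one of these two candidate swaps is genus-preserving. Label the anticlockwise-most vertices of $A,E,C$ by $a,e,c$, so that $e=a+1$ and $c=e+1$ on the polygon. The gluing graph then contains the edges $(a,c+1)$ and $(c,e)$, together with the edges from the pairs $\{E,F\}$, $\{B,D\}$ and $\{B',D'\}$. I would write down the cyclic order in the single cycle $G(X)$ of the relevant vertices $e,f,b,d$ (and their primed analogues). The fixed edges coming from $\{A,C\}$ give local constraints: $c\to e$ and $a \to c+1=b$.

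I expect the obstacle to be the case where both candidate swaps fall into the forbidden orderings $e<b<f<d$ or $e<d<f<b$. In that case I would first argue directly that the two forbidden configurations are incompatible, using the edges $c\to e$ and $a\to b$ together with the $e\to$ successor structure. If that fails, I would use the alternative swap pattern $\{E,F\},\{B,D\}\to\{E,B\},\{F,D\}$ with the roles of the two endpoints exchanged. Failing that as well, I would make a preliminary move of the pair $\{A,C\}$, rotating the local picture in the manner of \cref{fig:length_one_shifts}, so that a different neighbouring chord becomes available. A fallback that avoids delicate case analysis is a potential-function argument. Among all diagrams reachable from $X$ that contain a length $2$ chord, choose one minimising the length of the chord through the middle side $E$. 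If this length exceeds $2$, some genus-preserving swap strictly decreases it while keeping a length $2$ chord. Such a swap would be found by applying the $\Delta$-counting idea from \cref{claim:can_create_length_2} to the arc between $E$ and $F$. The Bolza exclusion enters there exactly as before, since a configuration with every local swap blocked forces the opposite-sides structure.

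Once a crossing pair of length $2$ chords exists, the claim follows. The sequence of swaps is finite, because each step either creates the length $2$ chord or strictly decreases the length of the chord through $E$.
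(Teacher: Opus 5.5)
There is a genuine gap. The step that carries all the content of the claim, namely exhibiting a genus-preserving swap that makes progress, is never established, and both of your concrete candidate swaps can fail at the same time.

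Take the octagon with sides numbered $1,\dots,8$ clockwise and $X=\{\{1,3\},\{2,6\},\{4,7\},\{5,8\}\}$. Its gluing graph is the single cycle $1\to4\to8\to6\to3\to2\to7\to5\to1$, so $X\in\chorddiagrams{4}{2}$. Its only length-$2$ chord is $\{1,3\}$, and it is not the Bolza diagram. With $A=1$, $E=2$, $C=3$, $F=6$, your two swaps give $\{2,4\},\{6,7\}$ and $\{2,8\},\{5,6\}$. Each of these contains a length-$1$ chord, hence a self-loop component in the gluing graph, so both lower the genus.

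Note also that \cref{claim:when_can_you_swap} only says whether \emph{some} swap of a pair preserves the genus. The particular swap producing $\{E,B\}$ needs $e<b<d<f$, because the edges $f\to c\to e$ force $f$ to come last after $e$. In the example the order is $e<d<b<f$. This is not one of the two orderings you flagged as forbidden, yet the swap fails.

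Your fallbacks do not repair this. \cref{fig:length_one_shifts} needs a length-$1$ chord, which cannot exist when $g=n/2$. The potential-function argument just asserts that a length-decreasing swap exists, which is precisely what has to be proved. And no Bolza-type obstruction arises once a length-$2$ chord is present, so the $\Delta$-counting is not the relevant tool here.

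The missing idea, which is the paper's argument, is to move the far endpoint $F$ one side at a time rather than jumping it next to $C$ in a single swap. Let $F'$ be the neighbour of $F$ on the shorter arc towards $E$, let $G$ be its partner, and swap $\{E,F\},\{F',G\}\to\{E,F'\},\{F,G\}$. This is always genus-preserving.

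To see this, take $F'$ clockwise of $F$ and let $f,h$ be the anticlockwise-most vertices of $F,G$. The edges $f\to c$, $c\to e$, $e\to f+1$ are consecutive in the cycle. So the cycle reads $f\to c\to e\to f+1\to h+1\leadsto h\to f+2\leadsto f$. After the swap it reads $f\to h+1\leadsto h\to f+1\to c\to e\to f+2\leadsto f$, which is still a single cycle. The anticlockwise case is symmetric.

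The chord $\{A,C\}$ is untouched and the chord through $E$ still crosses it. Its length drops by one at each step, so after at most $n$ steps you have the crossing pair. In the example, $\{2,6\},\{7,4\}\to\{2,7\},\{4,6\}$ followed by $\{2,7\},\{8,5\}\to\{2,8\},\{5,7\}$ does the job.
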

\begin{proof}
	By \cref{claim:can_create_length_2}, we can assume that $X$ has a chord of length $2$. Denoting the chord's matched sides by $A$ and $B$, there must exist $\{C,D\} \in X$ such that $C$ is between $A$ and $B$; the chord $\{C,D\}$ has length strictly greater than $2$. Consider the side $E$ which is clockwise of $D$, and its paired-up side $F$ (see top of \cref{fig:decreasing_length_of_edge_under_arc}). Then the swap $\{C,D\},\{E,F\} \to \{C,E\},\{D,F\}$ preserves the genus. Indeed, the gluing graph $G(X)$ induces two possible orderings: either $x + 4 < y< z$, or $x + 4 < z < y$. In both cases, as is seen in \cref{fig:decreasing_length_of_edge_under_arc}, the gluing graph has a single cycle after the swap. This swap shortens the length of the chord with endpoint $C$; repeating this procedure up to $n$ times, we obtain two chords of length $2$ which cross each other, as needed.	
	\begin{figure}[H] 
		\centering
		\includegraphics[width=0.6\textwidth]{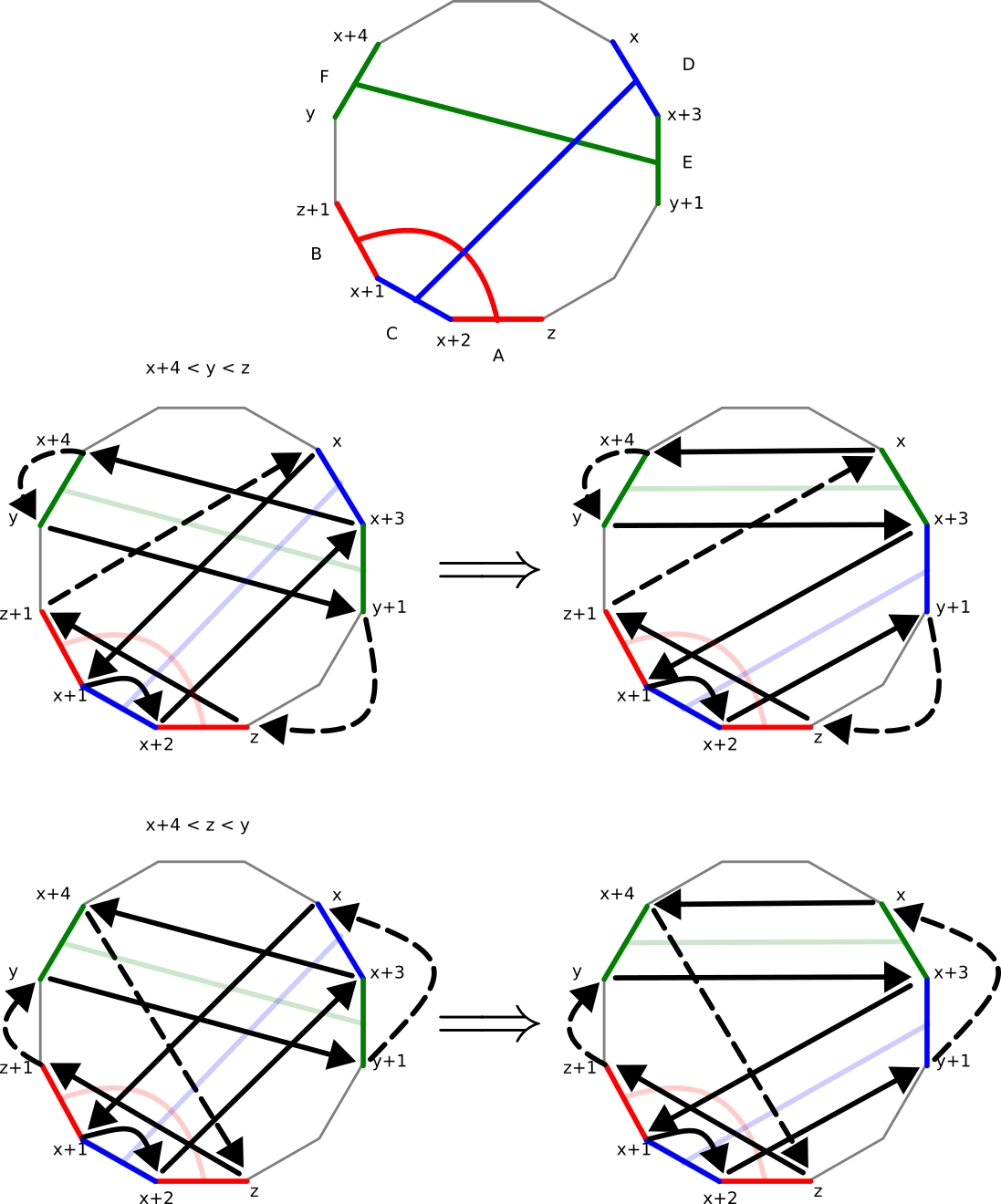}
		\captionsetup{width=.8\linewidth}		
		\caption{\captionstyle{The chord $\{C,D\}$ has length greater than $2$, and one of its endpoints is under a chord of length $2$. Its other endpoint can always be shifted closer to this chord by a swap, eventually yielding two chords of length $2$ which cross each other. The solid black lines are edges of the gluing graph, while the dashed black lines are unspecified and just show the ordering.}}
		\label{fig:decreasing_length_of_edge_under_arc}
	\end{figure}
\end{proof}

\begin{claim}\label{claim:can_create_length_1}
	Let $g < n/2$, and let $X \in \chorddiagrams{n}{g}$. If $X$ does not have a chord of length $1$, then there exists a sequence of genus-preserving chord swaps which creates one. 
\end{claim}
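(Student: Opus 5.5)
The plan is to translate the goal into the language of the gluing graph and then make a local, carefully chosen swap. First observe that $X$ has a chord of length $1$ if and only if $G(X)$ has a component consisting of a single vertex, i.e.\ a loop $(v,v)$. Indeed, the chord $\{\{v-1,v\},\{v,v+1\}\}$ contributes the edges $(v-1,v+1)$ and $(v,v)$. Conversely, an edge $(x,y+1)$ with $x=y+1$ forces the two matched sides to be adjacent. So it suffices to find a genus-preserving swap that produces a fixed point of the gluing graph.

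Next I would record how a swap acts on $G(X)$. Swapping $\{A,B\},\{C,D\}$ deletes the four edges $(a,b+1),(b,a+1),(c,d+1),(d,c+1)$ and inserts four new edges on the same tails and heads. The number of components therefore changes by a bounded amount. Since the genus formula $n+1-2g$ forces the parity of the number of components, the change is $0$ or $\pm 2$. Since $g<n/2$, $G(X)$ has at least $n+1-2g\ge 2$ components. The polygon vertices $1,\dots,2n$ are partitioned among these cycles, so there exist clockwise-consecutive polygon vertices $u,u+1$ lying in different cycles of $G(X)$.

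The main step is to choose the swap near such a $u$. Let $A$ and $C$ be the two sides of $P$ meeting at the relevant vertex ($A=\{u-1,u\}$, $C=\{u,u+1\}$, or the analogous pair shifted by one). They are not matched to each other, since no chord has length $1$. Let $B$ and $D$ be their partners, and perform the swap $\{A,B\},\{C,D\}\to\{A,C\},\{B,D\}$.
\begin{itemize}
\item This swap creates the length-$1$ chord $\{A,C\}$, hence a new singleton cycle.
\item It therefore preserves the genus exactly when the remaining rewiring merges two previously distinct cycles into one, for a net change of $0$ components.
\end{itemize}
I would verify this by a case analysis in the style of \cref{claim:when_can_you_swap}. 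Place $a,b,c,d$ (and their successors) according to which cycles they lie in, and use the hypothesis that the two cycles at $u$ and $u+1$ are different. The intended outcome is that the two ends formerly joined through $u$ end up stitched into one cycle.

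The main obstacle is that this local check may fail in some configurations, for instance when $b$ or $d$ lies in the same cycle as $u$ in an unfavourable order, so that the swap splits a cycle instead of merging two. My fallback is a two-step procedure.
\begin{enumerate}
\item Take a shortest chord $\{A,B\}$, of length $\ell\ge 2$, and a side $C$ strictly inside its short arc. The partner $D$ of $C$ must lie outside that arc, by minimality of $\ell$.
\item Use whichever of the two swaps of $\{A,B\},\{C,D\}$ preserves the number of components to strictly decrease the minimal chord length. I would show at least one does, as in \cref{claim:can_cross_length_2}, using the multiple-component hypothesis to rule out both swaps changing the count.
\item Iterate at most $n$ times until the length reaches $1$.
\end{enumerate}
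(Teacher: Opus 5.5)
There is a genuine gap. Your main step is never verified, and the fallback rests on a claim that is false. The fallback asserts that a second component of $G(X)$ prevents both swaps of a shortest chord $\{A,B\}$ with a chord $\{C,D\}$ through its short arc from changing the genus. It does not, because that component can lie far away. Write $i$ for the side $\{i,i+1\}$, and take $n=5$ with chords $\{1,3\},\{2,4\},\{5,8\},\{6,9\},\{7,10\}$. No chord has length $1$, and $G(X)$ has the two vertex cycles $(1\,4\,3\,2\,5\,9\,7)$ and $(6\,10\,8)$, so $g=2<5/2$. The shortest chords are $\{1,3\}$ and $\{2,4\}$, and the only side inside either one is an endpoint of the other, so your procedure must swap this pair. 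The two possible results are:
\begin{itemize}
\item $\{1,2\},\{3,4\}$, with cycles $(2),(4),(1\,3\,5\,9\,7),(6\,10\,8)$;
\item $\{1,4\},\{2,3\}$, with cycles $(3),(2\,4),(1\,5\,9\,7),(6\,10\,8)$.
\end{itemize}
Both have genus $1$. The reason is that $a,b,c,d=1,3,2,4$ lie in one component in the order $a<d<b<c$. By the case analysis of \cref{claim:when_can_you_swap}, which only involves the component containing $a,b,c,d$, this order blocks both swaps. The argument of \cref{claim:can_cross_length_2} concerns a different configuration and does not transfer.

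What is missing is the correct local criterion, plus a separate argument for the one configuration where that criterion is unavailable. Let $\sigma$ be the permutation given by $G(X)$, and let $B,D$ be the partners of the sides $\{u-1,u\}$ and $\{u,u+1\}$. The swap creating a loop at $u$ is $\sigma\mapsto\sigma\circ(b\;u)(u-1\;d)$. Since $\sigma(b)=u$, the factor $(b\;u)$ always splits off the loop $(u)$. So the genus is preserved if and only if $(u-1\;d)$ merges two cycles, i.e.\ if and only if $u-1$ and $\sigma(d)=u+1$ lie in different components. Your condition that $u,u+1$ lie in different components does not suffice: in the example above, $u=6$ fails for both placements of the loop.

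A vertex $u$ with $u-1,u+1$ in different components exists unless $G(X)$ has exactly two components alternating around the polygon. The paper's two conditions on three consecutive labels are instances of this. In the alternating case every chord has odd length, and the paper argues as follows. Let $C$ be the side adjacent to $A$ inside the short arc of a shortest chord $\{A,B\}$, and swap to $\{A,D\},\{B,C\}$, i.e.\ $\sigma\circ(a\;c)(b\;d)$. Since $a$ and $c=a+1$ lie in different components, the first factor merges and the second splits, so the genus is preserved. The swap also creates the even-length chord $\{B,C\}$, which breaks the alternation, so the local step then applies.
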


\begin{proof}
	Let $m$ be the number of connected components of the gluing graph $G(X)$, and label the vertices of the polygon by $1,\ldots,m$ according to the component to which they belong. Since $g<n/2$, we must have $m \geq 2$. 
	
	If the polygon $P$ has three clockwise consecutive vertices such that either
	\begin{enumerate}
		\item all three have different labels; OR
		\item the first two vertices have the same label, and the third has a different label
	\end{enumerate} 
	then there is a single genus-preserving chord swap which creates a chord of length $1$; see \cref{fig:two_and_three_in_a_row}. When $m \geq 3$, there always exists three clockwise consecutive vertices satisfying one of these conditions, so it remains to consider the case where $m = 2$ and the vertices of the polygon are alternately labelled. 
	\begin{figure}[h!] 
		\centering
		\includegraphics[width=0.70\textwidth]{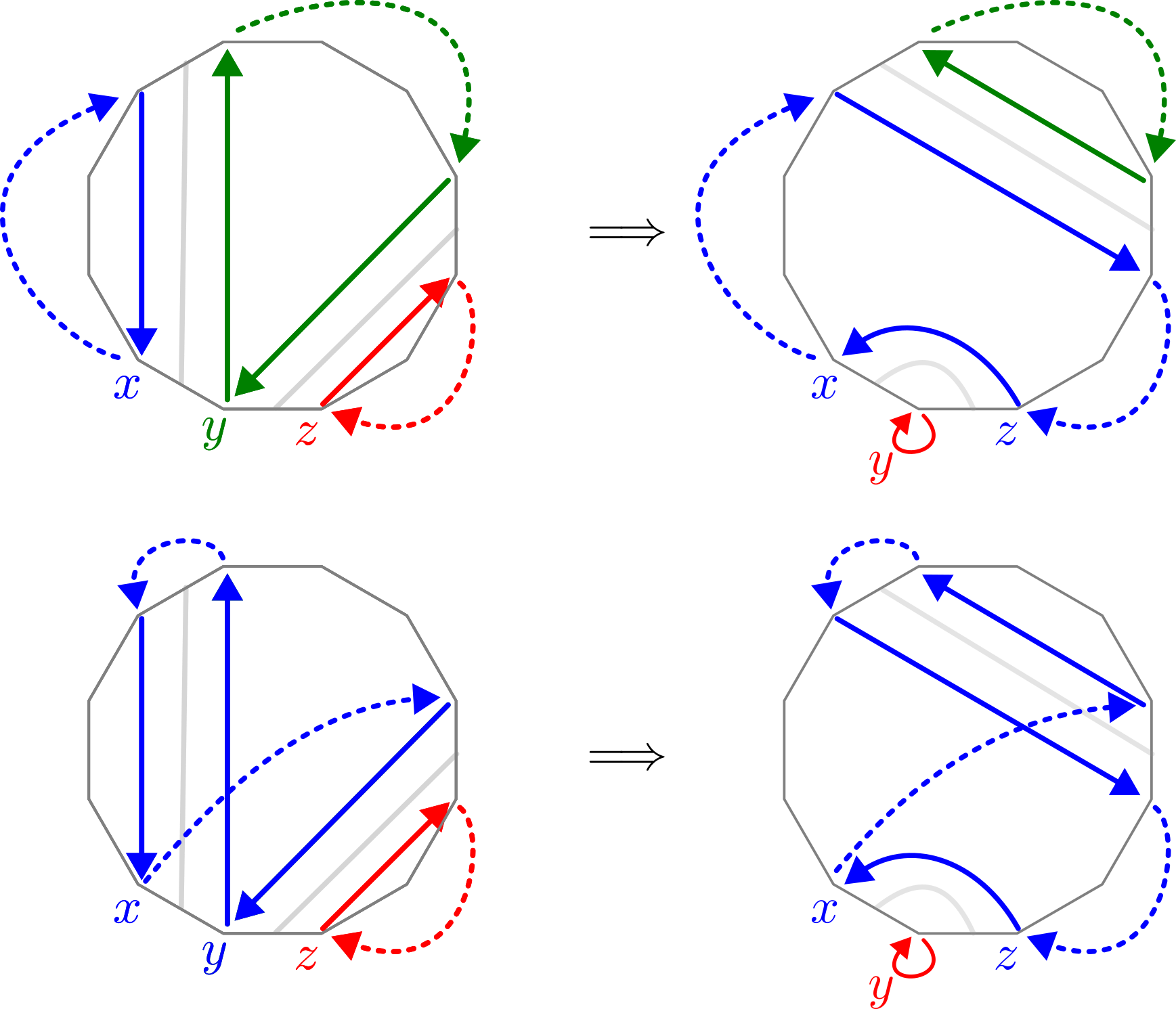}
		\captionsetup{width=.8\linewidth}
		\caption{\captionstyle{When there are three vertices with different labels (top) or two vertices of one label and a third vertex of another (bottom), it is always possible to create a length $1$ chord with a single genus-preserving swap. The chords are shown in grey, while the gluing graph is shown in colour. Solid lines are edges of the gluing graph, while dashed lines are unspecified only indicate the gluing graph's vertex connectivity.}}
		\label{fig:two_and_three_in_a_row}
	\end{figure}		
	Let $e = \{A,B\}$ be the shortest chord in $X$. This chord separates the sides of $P \setminus \{A,B\}$ into two sets; let $S$ be the smaller of the two. Any chord with one endpoint in $S$ must have its other endpoint in $P \setminus S$; otherwise, it would be shorter than $e$. Let $f$ be the chord whose endpoint in $S$ is adjacent to $A$. Then $e$ and $f$ can be swapped while preserving the genus to create a new chord diagram $X'$ with new chords, $e'$ and $f'$; see \cref{fig:alternating_colours}. In a diagram where the labels alternate, every chord must have odd length. In particular, $e$ has odd length; but the chord $e'$ has length ones less than $e$, and so has even length, and so $X'$ vertices are no longer alternately labelled. The argument in the previous paragraph can be applied to $X'$.
		\begin{figure}[h!] 
		\centering
		\includegraphics[width=0.70\textwidth]{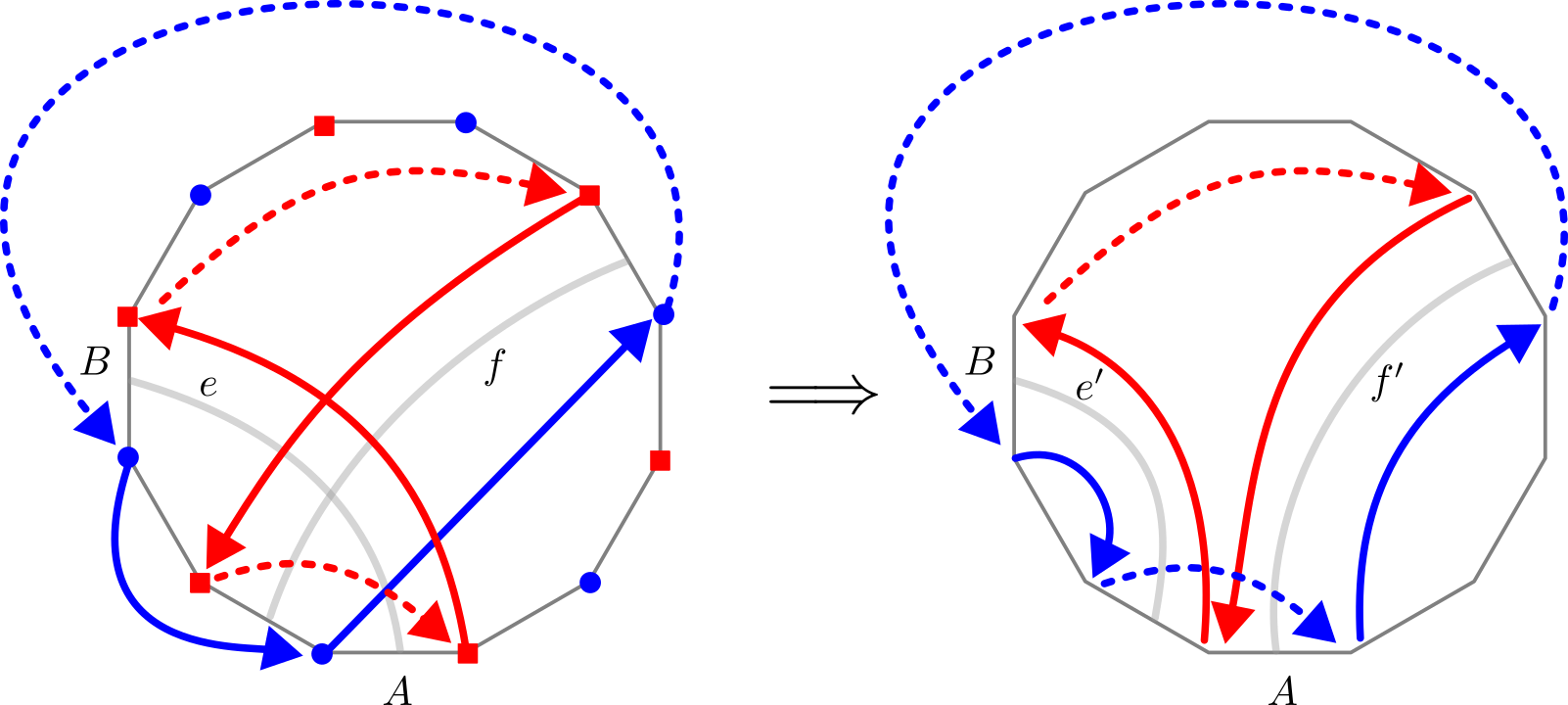}
		\captionsetup{width=.8\linewidth}
		\caption{\captionstyle{When the vertices alternate between two labels, the shortest chord (which has odd length) can always be shortened by $1$, creating a diagram with a non-alternating labelling. The chords $e$ and $f$ are shown in grey, while the gluing graph is shown in colour. Solid lines are edges of the gluing graph, while dashed lines are unspecified and only indicate the gluing graph's vertex connectivity. The connectivity of the red dashed edges on the left is forced, otherwise there would be three components.}}
		\label{fig:alternating_colours}
	\end{figure}	
\end{proof}

\begin{proof}[Proof of \cref{thm:irredfixedgenus}]	 	
	We will show that apart from the Bolza diagram for the case of $g = n/2$, any $X \in \chorddiagrams{n}{g}$ can reach a canonical diagram. Together with \cref{claim:canonical_can_reach}, this will show that the Markov chain graph is connected.
	
	\paragraph{The case $g = n/2$}		
	For the Bolza diagram, if $a,b$ are the anticlockwise-most vertices of the two sides $A,B$ respectively, then $a$ and $b$ are separated by $n$ vertices in the cycle ordering (see \cref{fig:bolza_polygon_ordering}). Thus, for any two pairs of matched sides $\{A,B\}, \{C,D\}$, either $a < c < b < d$ or $a < d < b < c$ in the cycle ordering, and so by \cref{claim:when_can_you_swap}, no move can be made: the Bolza diagram is isolated.
	\begin{figure}[h] 
		\centering
		\includegraphics[width=0.5\textwidth]{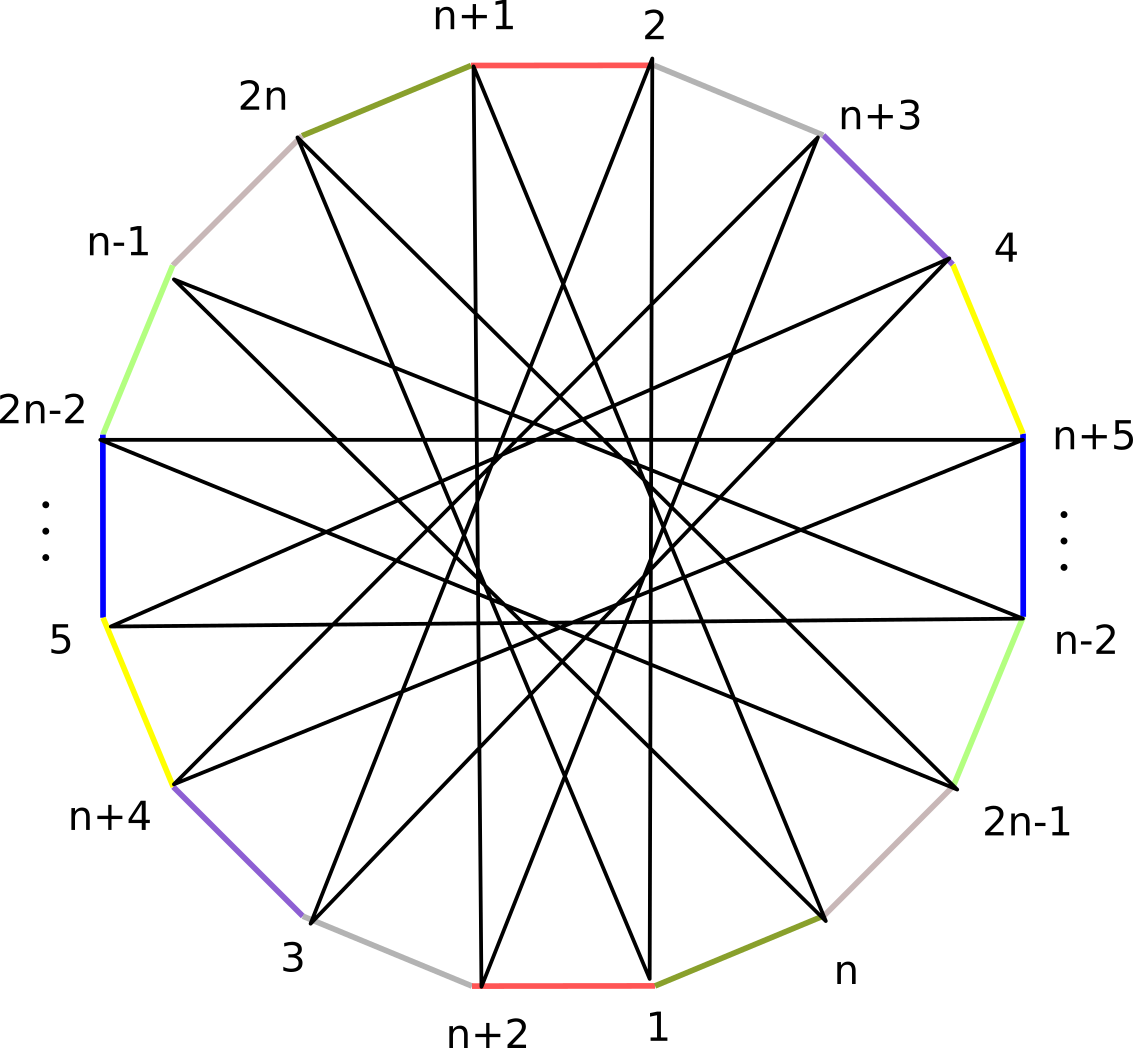}
		\captionsetup{width=.8\linewidth}
		\caption{\captionstyle{In the Bolza diagram, the anticlockwise-most vertices of opposing sides are separated by $n$ other vertices.}}
		\label{fig:bolza_polygon_ordering}
	\end{figure}
		
	We now assume that $X$ is not the Bolza diagram, and show that it can reach a canonical diagram. We assume by induction that it is true for all diagrams with $k$ chords for $k=2,4,\ldots, n-2$ (for $n=2$, there is only a single genus $1$ diagram, which is canonical).
		
	By \cref{claim:can_cross_length_2}, we can assume that $X$ has two chords $\{A,B\}$ and $\{C,D\}$ of length $2$ which cross each other, where $A,B,C,D$ are consecutive polygon sides. Consider the chord diagram $X' \in \chorddiagrams{n-2}{n/2-1}$ on the polygon $P'$ obtained by contracting to a point the sides $A,C,B,D$ in $P$ and restricting the chords of $X$ to the remaining sides. Any genus-preserving move on $X'$ naturally corresponds to a genus-preserving move in $X$.
		
	If $X'$ is not the Bolza diagram on $P'$, then by induction it can reach the canonical diagram which has two pairs of crossing length-$2$ chords, with one pair on each side of where the polygon sides $A,C,B,D$ were before they were contracted. Since $A,C,B,D$ is also a pair of crossing length-$2$ chords, we are done.
	
	Otherwise, $X'$ is the Bolza diagram on $P'$. When $n=4$, $X'$ is a diagram on a polygon with $4$ sides and is already a canonical diagram, so we assume $n > 4$. In this case, even though the Bolza diagram itself is isolated, it is possible to use the sides $A,C,B,D$ as a ``reservoir'' in order to ``unravel'' $X$ into a canonical diagram. Let $E$ be the side at distance $2$ anticlockwise to $A$, and $F$ be the side matched to it in $X$. A simple analysis of the ordering induced by the gluing graph gives that $e < a < b < f$ (see the left part of \cref{fig:subpolygon_bolza}), and so by \cref{claim:when_can_you_swap}, we can make the swap $\{A,B\}, \{E,F\} \to \{A,E\},\{B,F\}$ (see the middle part of \cref{fig:subpolygon_bolza}). In the new diagram, the chord $\{A,E\}$ has length $2$, and there is a chord of length greater than $2$ with an endpoint between $A$ and $E$. As in the proof of \cref{claim:can_cross_length_2}, there is a sequence of moves which reduces the length of this longer chord until it has length $2$ as well; this sequence of moves does not alter the chord $\{C,D\}$. After doing so, the diagram will have two crossing chords of length $2$, while the rest of the diagram will be different than the Bolza diagram on $2(n-2)$ sides (see the right part of \cref{fig:subpolygon_bolza}). We can now use the inductive argument of the preceding paragraph.
	\begin{figure}[H] 
		\centering
		\includegraphics[width=120mm]{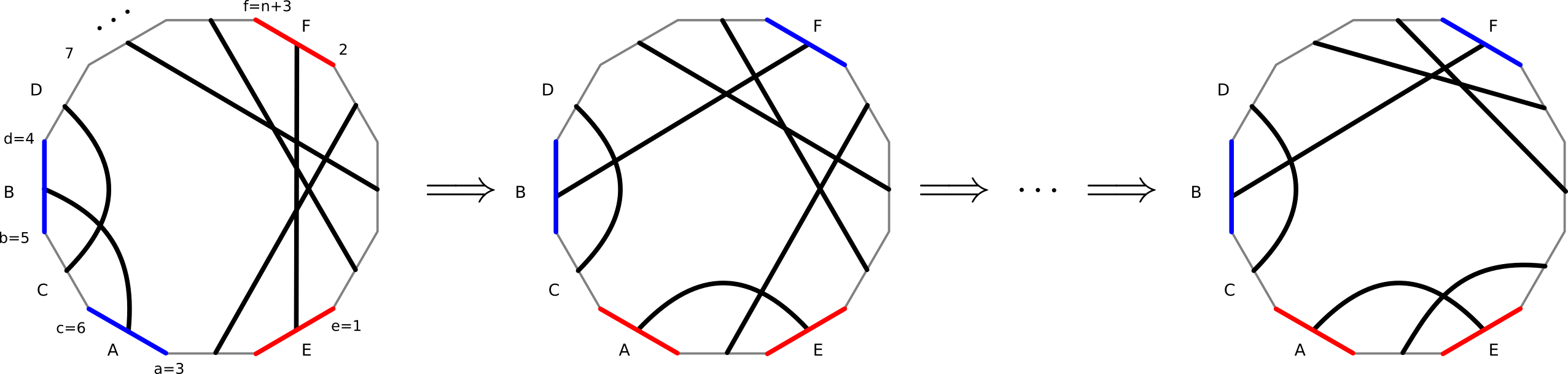}
		\captionsetup{width=.8\linewidth}
		\caption{\captionstyle{When there are two crossing chords of length $2$ together with a Bolza diagram, it is possible to create two different crossing chords so that the rest of the matching is not a Bolza diagram.}}
		\label{fig:subpolygon_bolza}
	\end{figure}				
	
	\paragraph{The case $g < n/2$} 			
	We now show that any chord diagram $X \in \chorddiagrams{n}{g}$ with $g < n/2$ can reach a canonical diagram.
	
	For $n=2$ this is trivially true as there are only two chord diagrams with genus $g=0$, both canonical. We assume by induction that it is true for all diagrams with $k$ edges, $k=2,\ldots,n-1$ and genus $< k/2$. By \cref{claim:can_create_length_1}, we can assume that $X$ has a chord $e$ of length $1$. Consider the chord diagram $X' \in \chorddiagrams{n-1}{g}$ on the polygon $P'$ obtained by contracting to a point the sides of $e$ and restricting the chords of $X$ to the remaining sides. 
	
	If $g < (n-1)/2$, by induction $X'$ can reach a canonical diagram, and we are done. If $g = (n-1)/2$ and $X'$ is not the Bolza diagram, then $X'$ can also reach a canonical diagram by the first part of this proof. Finally, if $X'$ is the Bolza diagram on $P'$, then it is possible to make two genus-preserving chord swaps on $X$ and create a new diagram with a pair of intersecting chords of length $2$: first extend the length $1$ chord to a length $2$ chord, and then use the chord whose endpoint is in between the endpoints of the length $2$ chord to create another length $2$ chord (see \cref{fig:one_plus_bolza}). When contracting the sides of the two length $2$ chords, we obtain a diagram $X''$ which, by induction, can reach a canonical diagram.	
	\begin{figure}[H] 
		\centering
		\includegraphics[width=0.75\textwidth]{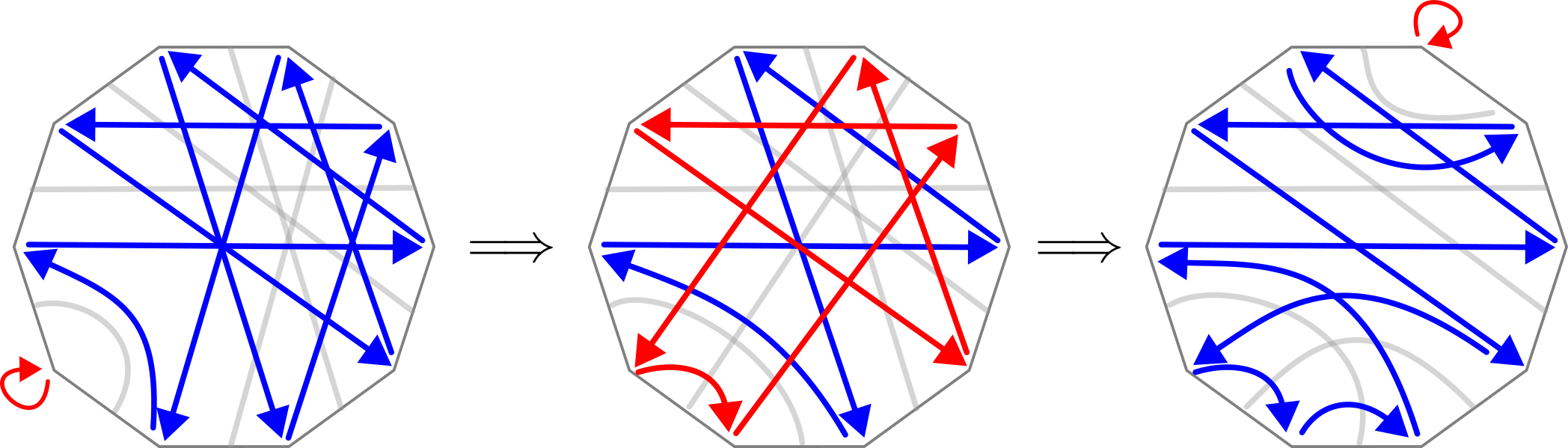}
		\captionsetup{width=.8\linewidth}
		\caption{\captionstyle{A diagram consisting of a length $1$ chord and a Bolza diagram can create two length $2$ intersecting chords. The chords are shown in grey, while the gluing graph is shown in colour. }}
		\label{fig:one_plus_bolza}
	\end{figure}		
\end{proof}

The proof of \cref{thm:irredfixedgenus} shows that the diameter of the connected component of the Markov graph on $\chorddiagrams{n}{g}$ is asymptotically bounded above by $C(n + g^2)$ for some constant $C > 0$: it takes a constant number of moves to create $n-g$ chords of length $1$, leaving an induced diagram of maximum genus $g$. For this induced diagram, it takes up to $g$ swaps to create a pair of intersecting length $2$ chords, and in the worst case this may need to happen $g$ times in order to get to a canonical diagram. We conjecture that this is the correct asymptotic:
\begin{conjecture}
	The diameter of the connected component of the Markov graph on $\chorddiagrams{n}{g}$ is of order $n+g^2$.
\end{conjecture}
For $g=0$, the diameter of $\chorddiagrams{n}{0}$ is exactly $n-1$. 

\section{Spectral gap lower bound} \label{sec:spectral_gap_lower_bound}
We will consider the Markov chain on $\chorddiagrams{n}{g}$ as a combination of two different walks: one is a product of walks on smaller-sized chord diagrams of genus $0$, while the other is a walk on what is essentially the ``simplest'' genus $g$ chord diagram, which has only a constant number of chords. Consider for example \cref{fig:small_genus_simplification}, whose left part shows a genus $1$ chord diagram on $n=100$ vertices. Although this diagram appears complicated, many of the chords are in some sense ``redundant''; all of the topology of the chord diagram stems from a simple structure, and the rest of the chords do not contribute to it. If we ignore these redundant chords, our state would be essentially equivalent to the diagram shown in the right part of \cref{fig:small_genus_simplification}, which is one of the $3$ topologies of chord diagrams of genus $1$ with $n=3$.

\begin{figure}[h!] 
	\centering
	\includegraphics[width=0.5\textwidth]{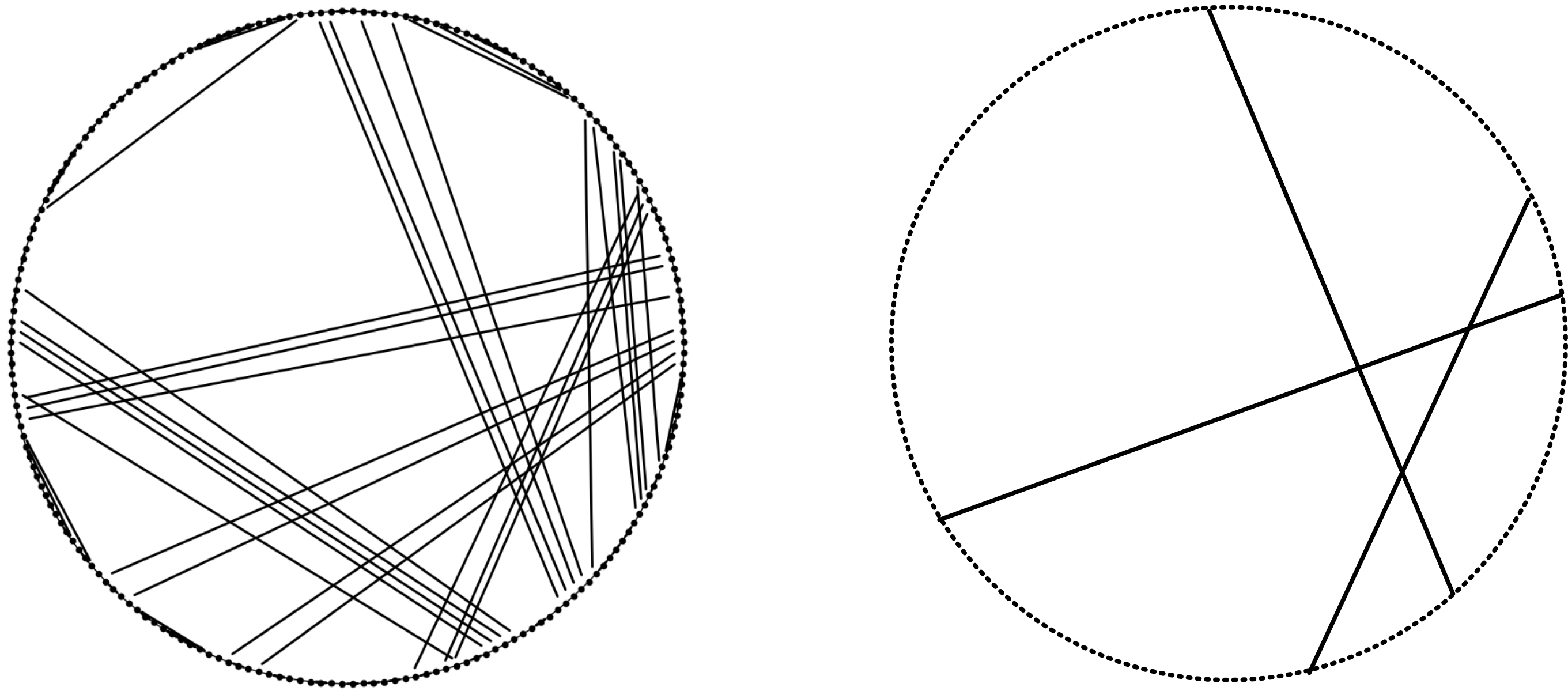}
	\captionsetup{width=.8\linewidth}
	\caption{\captionstyle{\textbf{Left}: A chord diagram of genus $1$ on a $200$ sided polygon. \textbf{Right}: Essentially, all of the topology of the diagram is encapsulated by a small number of chords.}}
	\label{fig:small_genus_simplification}
\end{figure}

More formally, we use the restriction-projection framework (originally developed by Madras and Randall \cite{madras_randall_markov_chain_decomposition}) using the notation of \cite{jerrum_et_all_decomposable_markov_chains}, which goes as follows. Let $P(x,y)$ be a time-reversible irreducible Markov chain on a state space $\Omega$ with stationary distribution $\pi$. Partition $\Omega$ into $m$ sets $\Omega_1, \ldots, \Omega_m$, and define the distribution $\overline{\pi}:[m] \to [0,1]$ by 
\begin{equation*}
	\overline{\pi}(i) = \sum_{x \in \Omega_i} \pi(x) \, .
\end{equation*}
The projection chain $\overline{P} : \{\Omega_1, \ldots, \Omega_m\} \times \{\Omega_1, \ldots, \Omega_m\} \to [0,1]$ is defined by 
\begin{equation*}
	\overline{P}(\Omega_i,\Omega_j) = \frac{1}{\overline{\pi}(i)} \sum_{\substack{x \in \Omega_i \\ y \in \Omega_j}} \pi(x) P(x,y) \, .
\end{equation*}
This chain is irreducible and has $\overline{\pi}$ as a stationary distribution.

For $i \in [m]$, the restriction chain $P_i : \Omega_i \times \Omega_i \to [0,1]$ is defined as 
\begin{equation*}
P_i(x,y) = \begin{cases}
	P(x,y) & x \neq y \\
	1- \sum_{z \in \Omega_i \setminus \{x\} }P(x,z) & x = y \, .
\end{cases}
\end{equation*}
We will assume that this chain is also irreducible, in which case it has stationary distribution $\pi(x)/\overline{\pi}(i)$. 

\begin{theorem}[{\cite[Theorem 1]{jerrum_et_all_decomposable_markov_chains}}]		
	\label{thm:projection_restriction_poincare}
	For $i \in [m]$, let $P_i$ satisfy the Poincar\'{e} inequality with constant $\gamma_i$, and denote $\gamma_{\min} = \min \{\gamma_1, \ldots, \gamma_m\}$. Let $\overline{P}$ satisfy the Poincar\'{e} inequality with constant $\overline{\gamma}$. Then the original Markov chain $P$ satisfies the Poincar\'{e} inequality with 
	\[
		\gamma = \overline{\gamma}\gamma_{\min} / 4 \, .
	\]
\end{theorem}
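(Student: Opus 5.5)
We prove \cref{thm:projection_restriction_poincare} by the standard decomposition of the Dirichlet form and of the variance. For $f:\Omega\to\mathbb{R}$ we write
\[
\mathcal{E}(f)=\tfrac12\sum_{x,y}\pi(x)P(x,y)(f(x)-f(y))^2 .
\]
Splitting the sum according to whether $x,y$ lie in the same block or in different blocks gives $\mathcal{E}(f)=\sum_i \overline{\pi}(i)\,\mathcal{E}_i(f|_{\Omega_i})+\mathcal{E}_{\mathrm{cross}}(f)$. Here $\mathcal{E}_i$ is the Dirichlet form of $P_i$ with respect to $\pi_i=\pi/\overline{\pi}(i)$. Holding probabilities do not contribute, so $P_i(x,y)=P(x,y)$ for $x\neq y$ makes this identity exact. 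The variance splits by the law of total variance,
\[
\mathrm{Var}_\pi(f)=\sum_i \overline{\pi}(i)\mathrm{Var}_{\pi_i}(f)+\mathrm{Var}_{\overline{\pi}}(\bar f),
\]
where $\bar f(i)=\mathbb{E}_{\pi_i}f$. The within-block term is immediate: $\sum_i\overline{\pi}(i)\mathrm{Var}_{\pi_i}(f)\le \gamma_{\min}^{-1}\sum_i\overline{\pi}(i)\mathcal{E}_i(f)\le\gamma_{\min}^{-1}\mathcal{E}(f)$.

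For the between-block term we apply the Poincaré inequality for $\overline{P}$:
\[
\mathrm{Var}_{\overline{\pi}}(\bar f)\le \overline{\gamma}^{-1}\cdot\tfrac12\sum_{i,j}\overline{\pi}(i)\overline{P}(i,j)(\bar f(i)-\bar f(j))^2 .
\]
By definition, $\overline{\pi}(i)\overline{P}(i,j)=\sum_{x\in\Omega_i,y\in\Omega_j}\pi(x)P(x,y)=:Q(i,j)$. We therefore write $\bar f(i)-\bar f(j)$ as an average of three differences, weighted by the flow $\pi(x)P(x,y)/Q(i,j)$ over $x\in\Omega_i$, $y\in\Omega_j$:
\[
\bar f(i)-\bar f(j)=(\bar f(i)-f(x))+(f(x)-f(y))+(f(y)-\bar f(j)).
\]
Jensen together with $(a+b+c)^2\le 3(a^2+b^2+c^2)$ then bounds the projected Dirichlet form by three terms.

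The middle term gives at most $3\,\mathcal{E}_{\mathrm{cross}}(f)\le 3\mathcal{E}(f)$. The two outer terms give
\[
3\sum_{i}\sum_{x\in\Omega_i}\pi(x)P(x,\Omega\setminus\Omega_i)(f(x)-\bar f(i))^2\le 3\sum_i\overline{\pi}(i)\mathrm{Var}_{\pi_i}(f),
\]
since $P(x,\cdot)\le 1$. Applying the restriction Poincaré inequality again, this is at most $3\gamma_{\min}^{-1}\mathcal{E}(f)$. Collecting terms,
\[
\mathrm{Var}_\pi(f)\le\Big(\gamma_{\min}^{-1}+3\overline{\gamma}^{-1}+3\overline{\gamma}^{-1}\gamma_{\min}^{-1}\Big)\mathcal{E}(f).
\]
Since spectral gaps are at most $2$ (and at most $1$ after lazification), each term is $O(\overline{\gamma}^{-1}\gamma_{\min}^{-1})$, which gives $\gamma\ge c\,\overline{\gamma}\gamma_{\min}$.

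The main obstacle is getting exactly the constant $1/4$, as opposed to some absolute constant. To do so we use the sharper splitting $(a+b)^2\le(1+\epsilon)a^2+(1+\epsilon^{-1})b^2$, and bound $\sum_x\pi(x)P(x,\Omega\setminus\Omega_i)$ more carefully. The key is that each crossing transition is counted once from each side, so the two outer terms share one variance budget. Optimizing $\epsilon$ then yields the stated $\overline{\gamma}\gamma_{\min}/4$, as in \cite{jerrum_et_all_decomposable_markov_chains}.
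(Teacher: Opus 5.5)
Everything through your bound $\mathrm{Var}_\pi(f)\le(\gamma_{\min}^{-1}+3\overline{\gamma}^{-1}+3\overline{\gamma}^{-1}\gamma_{\min}^{-1})\mathcal{E}(f)$ is correct, and it is the standard argument behind the cited result. The paper itself gives no proof, only the citation to Jerrum et al. However, that bound only yields $\gamma\ge\overline{\gamma}\gamma_{\min}/7$ (using gaps at most $1$), and your closing paragraph does not reach $1/4$, because the loss is not in the splitting constant. The two outer terms are already merged by reversibility. As long as you bound $\mathcal{E}_{\mathrm{cross}}(f)$ and $\sum_i\overline{\pi}(i)\mathcal{E}_i(f)$ each by the whole of $\mathcal{E}(f)$ and then add, no choice of $\epsilon$ helps. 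For any weighted splitting $(a+b+c)^2\le a^2/p+b^2/q+c^2/p$ with $2p+q=1$ (your $(1+\epsilon)$ splitting is a special case), the factor multiplying $\mathcal{E}(f)/(\overline{\gamma}\gamma_{\min})$ at $\overline{\gamma}=\gamma_{\min}=1$ is $1+1/p+1/q\ge 4+2\sqrt{2}$.

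The missing step is to use that $\mathcal{E}(f)=\sum_i\overline{\pi}(i)\mathcal{E}_i(f)+\mathcal{E}_{\mathrm{cross}}(f)$ splits into two nonnegative pieces, and to compare their coefficients rather than add them. Your own estimates already give
\[
\mathrm{Var}_\pi(f)\le\Bigl(1+\frac{3}{\overline{\gamma}}\Bigr)\frac{1}{\gamma_{\min}}\sum_i\overline{\pi}(i)\,\mathcal{E}_i(f)+\frac{3}{\overline{\gamma}}\,\mathcal{E}_{\mathrm{cross}}(f)\le\max\Bigl\{\frac{\overline{\gamma}+3}{\overline{\gamma}\gamma_{\min}},\,\frac{3}{\overline{\gamma}}\Bigr\}\,\mathcal{E}(f).
\]
If you keep $\gamma_{\mathrm{esc}}=\max_i\max_{x\in\Omega_i}P(x,\Omega\setminus\Omega_i)$ in place of $1$, you get the sharper bound $\min\{\overline{\gamma}/3,\,\overline{\gamma}\gamma_{\min}/(3\gamma_{\mathrm{esc}}+\overline{\gamma})\}$. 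To deduce $\overline{\gamma}\gamma_{\min}/4$ from this you need $\overline{\gamma}\le 1$ and $\gamma_{\min}\le 4/3$; the a priori bound $2$ on spectral gaps is not enough. Both conditions hold here because $P$ is lazy ($P(X,X)\ge 1/2$), and hence so are $\overline{P}$ and every $P_i$. For a singleton block the Poincar\'e inequality holds with any constant, so $\gamma_i$ must be capped at $1$ for the statement to be meaningful. For the paper's application any absolute constant suffices, since it is absorbed into $C_g$, so your cruder bound would already do there.
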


We will apply \cref{thm:projection_restriction_poincare} to a suitable decomposition of the original chain $P$ into restriction and projection chains. This decomposition identifies which chords are important to the topology of the chord diagram.

\pagebreak
Let $g \geq 1$, let $X \in \chorddiagrams{n}{g}$, and let $C_{\mathrm{cross}}$ be the set of chords of $X$ with at least one crossing. 
\begin{definition} \label{def:flatly_parallel}
	Let $e_{1} = \{A,B\},e_{2} = \{C,D\} \in C_{\mathrm{cross}}$ be two chords which do not cross each other. The chords partition the sides of $P \setminus \{A,B,C,D\}$ into three disjoint sets: 
	\begin{itemize}
		\item The set $I^{e_1}$ of sides which are separated from $e_2$ by $e_1$.
		\item The set $I^{e_2}$ of sides which are separated from $e_1$ by $e_2$.
		\item The set $I^{e_1,e_2}$ of sides which are between $e_1$ and $e_2$.
	\end{itemize}
	The chords $e_{1}$ and $e_{2}$ are said to be \textit{flatly parallel} if every chord with an endpoint in $I^{e_1,e_2}$ has both endpoints in $I^{e_1,e_2}$, and none of those chords intersect each other.
\end{definition}

\begin{figure}[h!] 
	\centering
	\includegraphics[width=0.3\textwidth]{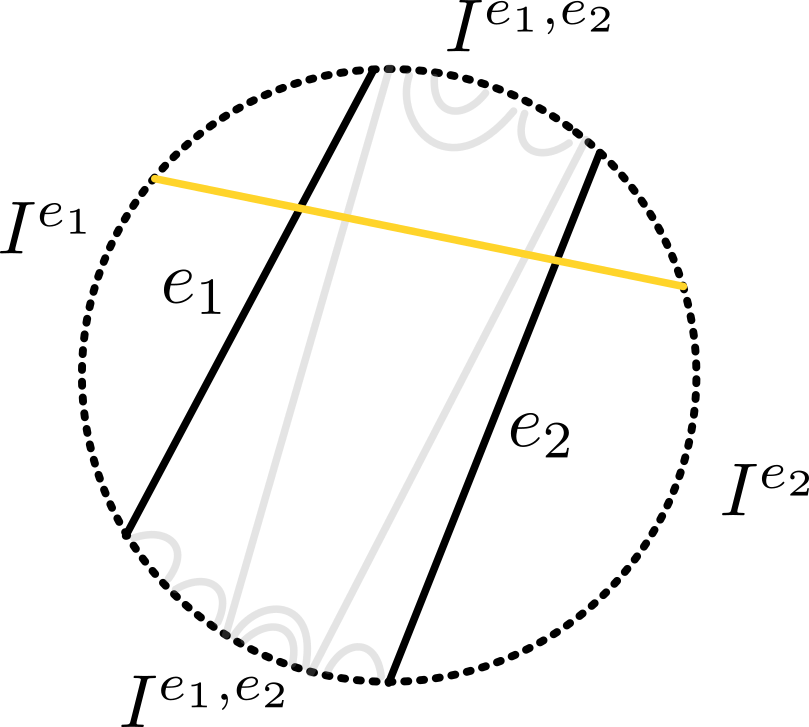}
	\captionsetup{width=.8\linewidth}
	\caption{\captionstyle{An example of flatly chords $e_1$ and $e_2$, shown in black. The chords in $I^{e_1,e_2}$ are shown in grey, while a chord which crosses $e_1$ and $e_2$ is shown in yellow (there must be at least one such chord).}}
	\label{fig:flatly_parallel}
\end{figure}

\begin{claim} \label{claim:crossing_parallel_chords}	
	Let $e_1$ and $e_2$ be flatly parallel. Then any chord $f$ which intersects $e_1$ also intersects $e_2$.		
\end{claim}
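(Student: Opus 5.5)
The plan is to argue directly from the cyclic order of sides around the polygon, using only the definition of flatly parallel in \cref{def:flatly_parallel}. Removing the four sides $A,B,C,D$ of $e_1=\{A,B\}$ and $e_2=\{C,D\}$ splits the remaining sides into the three arcs $I^{e_1}$, $I^{e_1,e_2}$, $I^{e_2}$. Because $e_1$ and $e_2$ do not cross, going around the polygon we meet them in the cyclic order
\[
A,\; I^{e_1},\; B,\; I^{e_1,e_2}_{(1)},\; C,\; I^{e_2},\; D,\; I^{e_1,e_2}_{(2)} ,
\]
where $I^{e_1,e_2}$ is the union of the two arcs $I^{e_1,e_2}_{(1)}$ and $I^{e_1,e_2}_{(2)}$, one on each side of the ``strip'' between the two chords.

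First I record the basic crossing criterion: two chords with distinct endpoints cross if and only if their endpoints alternate in the cyclic order. Equivalently, a chord $f$ crosses $e_1$ exactly when one endpoint of $f$ lies in $I^{e_1}$ and the other lies in the complementary arc, namely $I^{e_1,e_2}\cup I^{e_2}\cup\{C,D\}$. In the same way, $f$ crosses $e_2$ exactly when one endpoint lies in $I^{e_2}$ and the other lies in $I^{e_1,e_2}\cup I^{e_1}\cup\{A,B\}$.

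Now let $f \neq e_2$ be a chord crossing $e_1$. Then $f$ has one endpoint in $I^{e_1}$ and one endpoint $x$ in the complementary arc. Note that $x \notin \{C,D\}$: the sides $C,D$ are already matched to each other by $e_2$, and since $f\neq e_2$ it cannot use them. (If $f=e_2$ it would not cross $e_1$ anyway, by hypothesis.) So $x \in I^{e_1,e_2}\cup I^{e_2}$.

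The key step is ruling out $x\in I^{e_1,e_2}$. By the flatly parallel assumption, every chord with an endpoint in $I^{e_1,e_2}$ has both endpoints in $I^{e_1,e_2}$. But $f$ has its other endpoint in $I^{e_1}$, so $x\notin I^{e_1,e_2}$, and hence $x\in I^{e_2}$.

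Therefore $f$ has one endpoint in $I^{e_1}$ and one in $I^{e_2}$. The set $I^{e_1}$ lies on the side of $e_2$ away from $I^{e_2}$, so these endpoints alternate with $C,D$, and $f$ crosses $e_2$.

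There is no real obstacle in this argument. The only care needed is to set up the cyclic-order description of the three regions correctly, and to exclude the degenerate possibility that $f$ shares an endpoint with $e_2$, which is immediate because the chords form a perfect matching.
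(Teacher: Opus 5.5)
Your proof is correct and follows essentially the same route as the paper's. Both locate the endpoints of $f$ among $I^{e_1}$, $I^{e_1,e_2}$, $I^{e_2}$, use the flatly parallel condition to rule out an endpoint in $I^{e_1,e_2}$, and conclude that $f$ runs from $I^{e_1}$ to $I^{e_2}$. You also make explicit two small points the paper leaves implicit: that $f$ cannot end at $C$ or $D$, and the alternation criterion for crossing.
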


\begin{proof}
	By definition, $f$ cannot have both endpoints in $I^{e_1}$, $I^{e_2}$, or $I^{e_1, e_2}$, since then it would not intersect $e_1$. It cannot have a single endpoint in $I^{e_1, e_2}$, since then it would have both endpoints there. It therefore has one endpoint in $I^{e_1}$ and one in $I^{e_2}$, and so intersects $e_2$.		
\end{proof}

Being flatly parallel is an equivalence relation. Thus the set $C_{\mathrm{cross}}$ is partitioned into $N(X)$ flatly parallel sets $C_{\mathrm{cross}}=\bigcup_{i=1}^{N(X)}F_{i}$.
\begin{lemma}\label{lem:number_of_flatly_parallel}		
	$N(X) \leq 6g-3$.
\end{lemma}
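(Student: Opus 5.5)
The plan is to reduce $X$ to a ``core'' diagram $\widehat X$ that has exactly $N(X)$ chords, still has genus $g$, and has every vertex of its gluing graph of degree at least $3$. An Euler characteristic count then gives the bound. This mirrors the standard fact that a unicellular map of genus $g$ with all vertex degrees at least $3$ has at most $6g-3$ edges.

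\textbf{Step 1: remove the non-crossing chords.} Delete every chord of $X$ that has no crossing. A non-crossing chord $e$ splits the remaining sides into two arcs with no chord running between them. Repeatedly taking an innermost such chord, it is eventually of length $1$. Contracting a length-$1$ chord lowers $n$ by one and the number of gluing-graph components $V$ by one, since the vertex between its two sides becomes an isolated fixed point. By \eqref{eq:euler_characteristic} the genus is therefore unchanged. Removing a non-crossing chord does not change which pairs of the other chords cross. Moreover, all chords in any $I^{e_1,e_2}$ of a flatly parallel pair are non-crossing, so they disappear. Hence the flatly parallel classes $F_1,\dots,F_{N(X)}$ survive. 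The chords of each class now form a ``ladder'' of nested chords whose endpoints are pairwise adjacent on both sides.

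\textbf{Step 2: collapse each class to one chord.} Suppose $\{A,B\}$ and $\{C,D\}$ are parallel with $A,C$ adjacent and $D,B$ adjacent. Deleting $\{C,D\}$ and contracting the two sides merges the two edges of the gluing graph passing through the degree-$2$ vertex between $A$ and $C$ (and likewise between $D$ and $B$). A short check with the gluing graph, in the style of the figures in \cref{sec:irreducibility}, should show that this lowers $n$ by one and $V$ by one, so the genus is preserved. Crossings with other chords are unaffected, by \cref{claim:crossing_parallel_chords}. Doing this in every class gives a diagram $\widehat X$ of genus $g$ with exactly $N := N(X)$ chords, no two of which are flatly parallel, and in which every chord is crossed.

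\textbf{Step 3: degree count.} View $\widehat X$ as a one-face map with $N$ edges and $V$ vertices, where the degree of a vertex is the length of its cycle in $G(\widehat X)$. The sum of the degrees is $2N$. I then need to rule out small degrees:
\begin{itemize}
\item A vertex of degree $1$ is a polygon vertex between two sides glued to each other, i.e.\ a length-$1$ chord. Such a chord is non-crossing, so it was removed in Step 1.
\item A vertex of degree $2$ means that the polygon vertex $x$ between sides $A,C$ is glued only to the vertex between some sides $D,B$, in the orientation that forces $A\sim B$ and $C\sim D$. Then the chords $\{A,B\}$ and $\{C,D\}$ are parallel with empty regions between them, hence flatly parallel. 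This contradicts Step 2.
\end{itemize}
So every degree is at least $3$, giving $3V\le 2N$. By \eqref{eq:euler_characteristic}, $N = 2g-1+V \le 2g-1+\tfrac{2N}{3}$, and therefore $N\le 6g-3$.

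\textbf{Main obstacle.} I expect the main difficulty to be the degree-$2$ analysis in Step 3 together with the genus preservation in Step 2. One has to check carefully, via the orientation of the gluing-graph edges $(x,y+1)$, that a $2$-cycle really forces the two chords to be parallel rather than crossing. I would handle this by writing out both possible local configurations explicitly.
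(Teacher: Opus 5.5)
Your strategy is the paper's: reduce to a genus-$g$ unicellular map all of whose vertices have degree at least $3$, then use $E=2g-1+V$ and $3V\le 2E$. The paper runs it in the opposite direction. It deletes degree-$1$ vertices and smooths degree-$2$ vertices until none are left, checks that $N$ never changes, and finishes with $N(X_r)\le E$. You instead build a core with exactly $N(X)$ chords and then verify the degree condition.

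Steps 2 and 3 are fine. A $2$-cycle $x\to y\to x$ in $G(X)$ does force $\{x-1,x\}\sim\{y,y+1\}$ and $\{x,x+1\}\sim\{y-1,y\}$, which are two nested chords with $I^{e_1,e_2}=\emptyset$. You state without proof that no two chords of $\widehat X$ are flatly parallel, but it does hold. Any witness to non-flat-parallelism is a crossing chord, so Step 1 never removes it. A witness removed in Step 2 can be replaced by the kept member of its class, which crosses exactly the same chords.

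The step that fails is the genus argument in Step 1. An innermost non-crossing chord need not ever reach length $1$, because the crossing chords inside its arc are never deleted. For example, on the $10$-gon take the diagram $\{1,6\},\{2,4\},\{3,5\},\{7,9\},\{8,10\}$. It has genus $2$ (its gluing graph has two cycles) and $N(X)=4$. Its only non-crossing chord is $\{1,6\}$, of length $5$. You must delete it to reach a core with $N(X)$ chords, but your argument says nothing about why this preserves the genus.

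The fact is true and easy to prove directly. Let $\{A,B\}$ be non-crossing with $A=\{a,a+1\}$, $B=\{b,b+1\}$. Since no chord joins the two arcs, the cycle of $G(X)$ through the edge $(b,a+1)$ uses only the vertices $a+1,\dots,b$. Likewise the cycle through $(a,b+1)$ uses only $b+1,\dots,a$. Contracting $A$ and $B$ identifies $a$ with $a+1$ and $b$ with $b+1$. This fuses those two cycles into one and leaves all other cycles intact, so $n$ and $V$ both drop by one and \eqref{eq:euler_characteristic} gives the same genus. Equivalently, a non-crossing chord is a bridge of the unicellular map, and contracting a non-loop edge preserves genus.

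Alternatively, follow the paper. Delete only length-$1$ chords and smooth every degree-$2$ vertex, whether or not its chords cross anything. Let bridges like $\{1,6\}$ survive as extra edges, and conclude with $N\le E$ rather than $N=E$.
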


\begin{proof}
	For this proof, it is convenient to work with unicellular maps, an object equivalent to chord diagrams. For self-completeness, we briefly describe the connection between the two. A combinatorial map is a connected multigraph, where each vertex has a cyclic ordering of the endpoints of its edges (with self-loops having two endpoints). Its faces are the closed cycles obtained by walking along the graph, where at every step the next edge is given by the cyclic order of edges of the current vertex. A map is called \textit{unicellular} if it has only one face. The genus of a map is given by the Euler relation $V + F - E = 2 - 2g$. A map is called rooted if it has a distinguished oriented edge.
	
	Let $\surfacemaps{n}{g}$ be the set of all rooted unicellular combinatorial maps of genus $g$ with $n$ edges. There is a bijection between $\chorddiagrams{n}{g}$ and $\surfacemaps{n}{g}$:
	\begin{itemize}
		\item Given a unicellular map $(M,e) \in \surfacemaps{n}{g}$, consider the cycle corresponding to the single face of $M$ which has $e$ as its first edge. This walk has length $2n$, and visits every edge in $M$ exactly twice. The corresponding chord diagram has a chord between the sides $i$ and $j$ of the polygon $P$ whenever the same edge of $M$ appears at positions $i$ and $j$ in the walk.
		
		\item Given a chord diagram $X \in \chorddiagrams{n}{g}$, the graph obtained by gluing together the vertices of the polygon $P$ is a unicellular map. The root edge is the polygonal side from vertex $1$ to $2$ in the polygon, and the cyclic ordering is given by the order of the edge endpoints as the polygon sides are traversed.
	\end{itemize}
	See \cref{fig:map_diagram_bijection} for an example.
	
	\begin{figure}[h!] 
		\centering
		\includegraphics[width=0.8\textwidth]{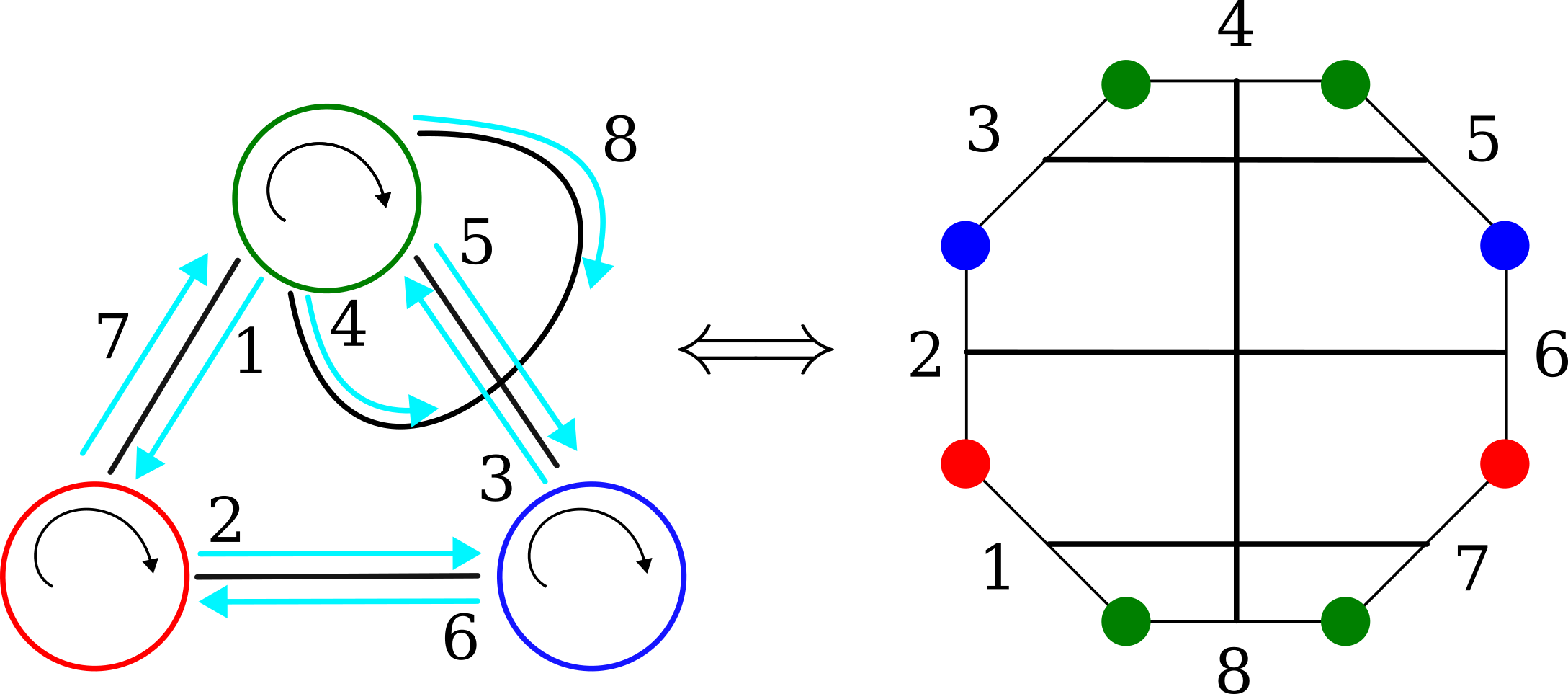}
		\captionsetup{width=.8\linewidth}
		\caption{\captionstyle{\textbf{Left}: a unicellular map with $3$ vertices and $4$ edges, including one self loop. The black arrows inside the vertices depict the cyclic ordering of the edges. Walking along its edges in order gives a single cycle (depicted in light blue) where every edge has two labels indicating the time it was traversed. \textbf{Right}: the chord diagram obtained by matching polygonal sides whose labels correspond to the same traversed edge in the map on the left.}}
		\label{fig:map_diagram_bijection}
	\end{figure}
	
	We now prove the lemma. Let $(M, e)$ be the unicellular map corresponding to $X$, let $x\in M$ be a vertex of degree $1$ or $2$, and let $M'$ be	the map obtained by deleting $x$ from $M$ in the following manner: if $x$ has degree $1$, we remove it together with its edge, and if $x$ has degree $2$, we remove $x$ and its two edges $\left(u,x\right)$, $\left(x,v\right)$ and replace them by a single edge $\left(u,v\right)$ (note that when $\deg\left(x\right)=2$, $x$ must have two distinct edges rather than be an isolated vertex with a self loop). If the root edge is deleted in this way, choose a new one arbitrarily. Denote by $X'$ the chord diagram corresponding to $M'$. Then $N(X) = N(X')$. To see this, observe that a vertex of degree $1$ in $M$ corresponds to a non-crossing chord in $X$, which cannot affect $N(X)$, while the edges of a degree-2 vertex in $M$ correspond to two parallel adjacent chords in $X$. Replacing the two edges by a single edge simply removes one of these chords in the chord diagram. Since any third chord which crosses one must also cross the other, removing one of them does not change the size of $N(X)$ (note that changing the root edge amounts only to rotating the chord diagram, and does not affect the flatly parallel sets).
	
	Let $M_{r}$ be the map obtained from $M$ by iteratively deleting vertices of degree $1$ and $2$ until all remaining vertices have degree $\geq3$. Then by the above, its chord diagram $X_r$ satisfies $N(X) = N(X_r)$.
	Further, the genus of $M_{r}$ is the same as that of $M$: whenever we delete a vertex in the manner described above, we reduce by $1$ both the number of edges and the number of vertices. The number of	vertices $N$ and edges $E$ of $M_{r}$ thus satisfy 
	\begin{equation*}
		E=2g+N-1.
	\end{equation*}	
	Since $M_{r}$ has degree at least $3$, we have $N \leq \frac{2}{3}E$,
	yielding 
	\begin{equation*}
		E\leq6g-3.
	\end{equation*}	
	The number of flatly parallel sets cannot be larger than the number of edges, so $N(X_r) \leq 6g-3$.
\end{proof}

A chord $e\in F_{i}$ is called \textit{extremal} if all other chords of $F_{i}$ lie to one side of it. If $F_{i}$ contains just a single chord, that chord is extremal; otherwise, $F_{i}$ has exactly two extremal chords (this is evident from \cref{claim:crossing_parallel_chords}). For every $i = 1,\ldots, N(X)$, let $E_i$ be the set of extremal chords of $F_i$, and let
\begin{equation*}
	S(X) = \{ E_1, \ldots, E_{N(X)}\} \, .
\end{equation*}
We call $S$ the \textit{signature} of $X$. 

Let $\parallelsigs{n}{g} = \{S(X) \mid X \in \chorddiagrams{n}{g} \}$ be the set of all signatures on chord diagrams of genus $g$ on $2n$ sides, and for each $S \in \parallelsigs{n}{g}$, let
\[
\chordsignatures{S} = \{X \in \chorddiagrams{n}{g} \mid S(X) = S \} \, 
\]
be the set of all chord diagrams that have the signature $S$. 

Let $S \in \parallelsigs{n}{g}$ be a signature, and let $E_i \in S$ be a set of extremal chords. If $E_i$ contains just a single chord, let $I_i = \emptyset$; otherwise, denoting $E_i = \{e_1, e_2\}$, let $I_i = I^{e_1, e_2}_i$ be the set of sides between $e_1$ and $e_2$, as defined in \cref{def:flatly_parallel}. By definition, for any $X \in \chordsignatures{S}$, the chords of $X$ with endpoints in $I_i$ do not cross each other. If we remove from the polygon $P$ all sides which are endpoints of extremal chords, as well as all sides belonging to any $I_i$, the remaining sides (if any) can be grouped into a collection of disjoint sets $J_1, \ldots , J_\ell$, with $\ell \leq |S|$, such that the $J_j$ are maximal under the condition that any possible chord with both endpoints in $J_j$ does not intersect an extremal chord (see \cref{fig:polygon_partition}). By definition, for any $X \in \chordsignatures{S}$, chords with endpoints in $J_i$ do not belong to any flatly parallel class, and so also do not have any crossings. Further, any chord of $X$ with one endpoint in $J_i$ must have both of its endpoints in $J_i$ (by maximality). This gives a characterization of $\chordsignatures{S}$: let $H(S) = \{I_1, \ldots, I_{|S|}, J_1, \ldots, J_\ell\}$. Then $\chordsignatures{S}$ is the set of all chord diagrams $X$ containing the extremal edges of $S$, such that the restriction of $X$ to any $A \in H(S)$ is a perfect non-crossing matching.

\begin{figure}[h!] 
	\centering
	\includegraphics[width=0.3\textwidth]{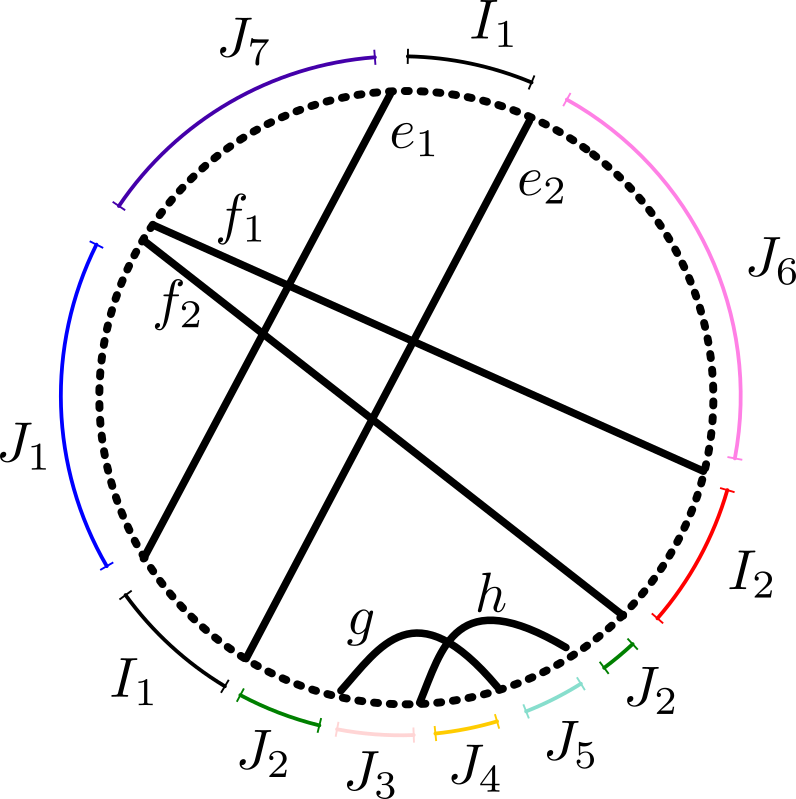}
	\captionsetup{width=.8\linewidth}
	\caption{\captionstyle{An example of the sets $I_i$ and $J_i$. There are 6 extremal edges: two pairs $e_1,e_2$ and $f_1,f_2$, and two singles, $g$ and $h$. Non-extremal chords are not shown. Note that $I_1$ is the union of two intervals (the intervals between the sides $e_1$ and $e_2$), but $I_2$ is only a single interval (since there are no sides between the neighbouring endpoints of $f_1$ and $f_2$). $J_2$ is also the union of two intervals.}}
	\label{fig:polygon_partition}
\end{figure}

\begin{proof}[Proof of \cref{thm:spectral_gap_lower_bound}]
We apply \cref{thm:projection_restriction_poincare} to the projection chain $\overline{P}$ defined according to the partition $\chorddiagrams{n}{g} = \union_{S \in \parallelsigs{n}{g}} \chordsignatures{S}$, and the restriction chains $\{P_S\}_{S \in \parallelsigs{n}{g}}$, where each $P_S$ is defined on $\{ \chordsignatures{S} \}$. \cref{thm:spectral_gap_lower_bound} follows immediately from the following two propositions, whose proofs follow next.
\begin{proposition} \label{prop:restriction_chain_gap}
	The chain $P_S$ is irreducible for every $S \in \parallelsigs{n}{g}$. There exists a universal constant $C > 0$ such that for every $S \in \parallelsigs{n}{g}$, 
	\begin{equation} \label{eq:restriction_chain_gap}
		\gamma_S \geq C n^{-4} \, .
	\end{equation}
\end{proposition}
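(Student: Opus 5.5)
The plan is to identify the restriction chain $P_S$ with a product of non-crossing chord swap chains, one on each block of $H(S)$, and then use Cohen's bound \cref{thm:cohem_gap_lower_bound} together with the standard tensorization of spectral gaps for product chains.

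\textbf{Step 1: product structure of $\chordsignatures{S}$.} By the characterization preceding the proof, $\chordsignatures{S}$ is in bijection with $\prod_{A \in H(S)} \mathcal{NC}(A)$. Here $\mathcal{NC}(A)$ is the set of perfect non-crossing matchings of the sides in $A$. Each block $A$ is a union of at most two circular intervals, but crossing is determined by the cyclic order, so $\mathcal{NC}(A)$ is just $\chorddiagrams{|A|/2}{0}$ on a polygon with $|A|$ sides. I must verify that every tuple of non-crossing matchings, together with the extremal chords, gives a diagram of genus $g$ with signature exactly $S$. For this:
\begin{itemize}
	\item The genus is unchanged under replacing a non-crossing matching inside a block. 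In gluing-graph terms, a non-crossing matching of a block contributes a fixed number of components, and flat pieces can be contracted as in the proof of \cref{lem:number_of_flatly_parallel}.
	\item No new crossings or new flatly parallel classes are created. Chords inside $I_i$ cross nothing, chords inside $J_j$ cross nothing, and the extremal chords keep the same mutual crossings.
\end{itemize}

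\textbf{Step 2: moves within $\chordsignatures{S}$.} Consider a swap of two chords lying in the same block $A$ whose result is still non-crossing in $A$. Such a swap preserves the genus and the signature by Step 1, so it is a move of $P_S$. Its probability is $\frac{1}{4\binom{n}{2}}$, whereas in the genus-$0$ chain on $A$ it would be $\frac{1}{4\binom{|A|/2}{2}}$. Any other moves of $P_S$ only help, since by Dirichlet form comparison extra edges can only increase the gap. So $P_S$ dominates the chain $Q$ that picks a pair of chords uniformly among all $\binom n2$ pairs and performs a swap only if both chords are in the same block and the result is non-crossing there. In particular, $P_S$ is irreducible, because each genus-$0$ factor is irreducible (genus $0$ has diameter $n-1$).

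\textbf{Step 3: spectral gap of $Q$.} $Q$ is a product chain. With $m_A=|A|/2$, block $A$ is updated with probability $\binom{m_A}{2}/\binom n2$, and then runs the genus-$0$ chain. For a product chain where coordinate $A$ is updated with probability $p_A$ and has gap $\gamma_A$, the standard tensorization gives
\[
\mathrm{gap} \geq \min_A p_A\gamma_A .
\]
Here $p_A\gamma_A \geq \frac{m_A^2}{Cn^2}\cdot C m_A^{-4} = C n^{-2} m_A^{-2} \geq C n^{-4}$ by Cohen's bound. Blocks with $m_A\le 1$ are trivial and contribute nothing. This gives \eqref{eq:restriction_chain_gap}.

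\textbf{Main obstacle.} The delicate part is Step 1, specifically verifying that the signature is exactly preserved. A swap inside $I_i$ must not merge classes, and re-matching a block must not make some chord in $J_j$ flatly parallel to something else. Since $J_j$-chords and $I_i$-chords never cross anything, they lie outside $C_{\mathrm{cross}}$, so they do not affect the classes. The crossing relations among the extremal chords are fixed. The remaining point is whether the flatly parallel relation between extremal chords could change. Its definition depends only on chords with endpoints between them, and these stay non-crossing and confined to their blocks, so it should not.
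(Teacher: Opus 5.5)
Steps 2 and 3 are the paper's proof. The paper also writes $P_S=\sum_{A\in H(S)}p_A\tilde P_A+(1-\sum_A p_A)\mathrm{Id}$ as a mixture of genus-$0$ chains on the blocks, applies the product-chain bound $\gamma_S\ge\min_A p_A\tilde\gamma_A$, and inserts Cohen's bound. Your Dirichlet-form domination is valid but not needed, because $P_S$ has no other moves. A swap involving an extremal chord deletes that chord, and a swap of chords from two different blocks creates a chord joining two blocks; either way the result leaves $\chordsignatures{S}$.

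The flaw is in Step 1. The paper does not re-derive the product description of $\chordsignatures{S}$ inside this proof; it uses the characterization stated just before it, and you may simply cite that. The verification you add, however, rests on a false claim: that chords inside $I_i$ cross nothing.

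By the definition of extremality, every non-extremal chord of $F_i$ lies strictly between $e_1$ and $e_2$. It therefore has both endpoints in $I_i$ and joins the two arcs of $I_i$. By \cref{claim:crossing_parallel_chords} it crosses every chord that crosses $e_1$, so it lies in $C_{\mathrm{cross}}$. Your ``Main obstacle'' paragraph concludes that $I_i$-chords lie outside $C_{\mathrm{cross}}$ and cannot affect the classes. That skips exactly the case that needs an argument: re-matching $I_i$ changes which chords belong to $F_i$. You must show that the new crossing chords neither form a new class nor displace $e_1,e_2$ as extremal chords.

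The correct reasoning runs as follows. Let $M$ be a non-crossing perfect matching of $I_i$.
\begin{itemize}
\item A chord of $M$ with both endpoints in one arc crosses nothing.
\item A chord $h$ of $M$ joining the two arcs crosses exactly the chords running from $I^{e_1}$ to $I^{e_2}$. This set is nonempty since $e_1\in C_{\mathrm{cross}}$, so $h\in C_{\mathrm{cross}}$.
\item The sides between $e_1$ and $h$ lie in $I_i$, and the chords of $M$ with an endpoint there stay there and do not cross. Hence $h$ is flatly parallel to $e_1$.
\item Since $h$ lies strictly between $e_1$ and $e_2$, these remain the extremal chords of the class.
\end{itemize}
Your closing claim that flat parallelism between other pairs ``should not'' change is also only heuristic. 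This is one more reason to rely on the paper's characterization here. Separately, a block $J_j$ can consist of more than two arcs; this is harmless, since, as you note, only the cyclic order matters.

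With Step 1 replaced by a citation of the characterization, or repaired along the lines above, the rest of your argument goes through and coincides with the paper's.
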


\begin{proposition} \label{prop:projection_chain_gap}
	For every $g \geq 1$, there exists a constant $C_g$ such that 
	\begin{equation} \label{eq:projection_chain_gap}
		\overline{\gamma} \geq C_g ~ n^{-1488g+620} \, .
	\end{equation}
\end{proposition}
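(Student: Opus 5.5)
The plan is to bound $\overline{\gamma}$ by the canonical path method on the projection chain: $\overline{\gamma} \geq 1/(\rho L)$, where $L$ is the maximal path length and
\[
\rho = \max_{(S,S')} \frac{1}{Q(S,S')}\sum_{\text{paths through }(S,S')} \overline{\pi}(T)\overline{\pi}(T'), \qquad Q(S,S') = \overline{\pi}(S)\overline{P}(S,S').
\]
The key observation is that the projection chain has only polynomially many states, and their weights are all within polynomial factors of one another. So even crude paths, one for each pair of signatures, should give a polynomial bound, with the exponent bookkeeping producing the $-1488g+620$.

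\textbf{Step 1 (counting signatures).} By \cref{lem:number_of_flatly_parallel}, a signature has at most $6g-3$ classes and hence at most $12g-6$ extremal chords. Each chord is determined by two sides, so $|\parallelsigs{n}{g}| \leq (2n)^{24g-12}$.

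\textbf{Step 2 (comparable weights).} By the characterization of $\chordsignatures{S}$ via $H(S)$, we have $|\chordsignatures{S}| = \prod_{A\in H(S)} \mathrm{Cat}(|A|/2)$. The blocks contain all sides except the $2k$ endpoints of the $k \le 12g-6$ extremal chords, so $\sum_A |A|/2 = n-k$. Using $\mathrm{Cat}(m) \asymp 4^m m^{-3/2}$, this product is $4^{n-k}$ times a factor between $n^{-O(g)}$ and $1$. Since $|H(S)| = O(g)$, all the $\overline{\pi}(S)$ lie within a factor $n^{O(g)}$ of one another. In particular,
\[
\overline{\pi}(T)\overline{\pi}(T') \leq n^{O(g)}\, \overline{\pi}(S)\, |\parallelsigs{n}{g}|^{-1}\quad \text{for all } S,T,T'.
\]

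\textbf{Step 3 (conductance of projection edges).} This is the step I expect to be the main obstacle. Whenever some $X\in\chordsignatures{S}$ has a genus-preserving swap to some $Y\in\chordsignatures{S'}$, I want to show that $\overline{P}(S,S') \geq n^{-O(g)}$, i.e. that a $n^{-O(g)}$ fraction of $\chordsignatures{S}$ admits such a swap. The approach is as follows. Fix the two swapped chords together with the $O(1)$ chords in their vicinity that decide whether the signature changes in the same way. Then let the non-crossing matchings on the remaining parts of each block in $H(S)$ vary freely. Fixing $O(1)$ chords inside a block of size $m$ still leaves a product of Catalan numbers that is at least $m^{-O(1)}\mathrm{Cat}(m/2)$, so we lose only $n^{-O(g)}$ over all blocks. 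The delicate part is checking that the swap changes the signature identically for all these $X$, in particular that no new crossing class is created or merged inside a block. For this I would use \cref{claim:crossing_parallel_chords} to argue that crossings are determined by extremal chords and the two swapped chords. Once this is done, the factor $1/(4\binom n2)$ adds another $n^{-2}$.

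\textbf{Step 4 (paths and the final bound).} The projection chain is irreducible because the original chain is, by \cref{thm:irredfixedgenus}. For each pair $(T,T')$ of signatures, take the projection of a shortest path in the original chain between some $X\in\chordsignatures{T}$ and some $Y\in\chordsignatures{T'}$. By the diameter bound after \cref{thm:irredfixedgenus}, this has length $L \leq C(n+g^2)$. Combining the steps:
\[
\rho \leq \max_{(S,S')} \frac{|\parallelsigs{n}{g}|^2\cdot n^{O(g)}\,\overline{\pi}(S)\,|\parallelsigs{n}{g}|^{-1}}{n^{-O(g)}\,\overline{\pi}(S)} = n^{O(g)},
\]
where $|\parallelsigs{n}{g}|^2$ bounds the number of paths through an edge and Steps 2 and 3 bound each term. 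Hence $\overline{\gamma} \geq C_g n^{-O(g)}$. The explicit exponent comes from tracking the constants $24g-12$ and $12g-6$, and the per-block losses in Step 3.
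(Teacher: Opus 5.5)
\textbf{The gap is in Step 3.} Your Steps 1, 2 and 4 are sound. Step 4 is also a genuinely different global argument from the paper's. The paper uses Cheeger, $\overline{\gamma}\ge\Phi^2/2$ with $\Phi\ge\min 2\overline{\pi}(S)\overline{P}(S,T)$, and this squaring is why its exponent is twice $-744g+310$. Your canonical paths, with $\sum_{T,T'}\overline{\pi}(T)\overline{\pi}(T')\le 1$ and $L\le|\parallelsigs{n}{g}|$, give $\overline{\gamma}\ge\min_e Q(e)/L$ without squaring. But everything rests on Step 3, and the family you propose there does not work.

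A block of $H(S)$ such as $I_i$ consists of two arcs. A free non-crossing matching of it may therefore contain chords joining the two arcs, i.e.\ new rungs of the class. These rungs are invisible to the signature of $X'$, but not to the swap.

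Concretely, on the decagon let $X=\{\{1,3\},\{2,7\},\{6,8\},\{4,5\},\{9,10\}\}$.
\begin{itemize}
\item $X$ has genus $1$ and classes $\{\{1,3\},\{6,8\}\}$ and $\{\{2,7\}\}$.
\item The swap $\{1,3\},\{6,8\}\to\{1,6\},\{3,8\}$ preserves genus; the result has three mutually crossing classes.
\item Now take $X'=\{\{1,3\},\{2,7\},\{6,8\},\{4,10\},\{5,9\}\}$. It differs from $X$ only in the matching of the block $\{4,5,9,10\}$ and has the same signature, since the new chords are rungs between $\{1,3\}$ and $\{6,8\}$. It also contains all of $E_X\cup(E_Y\cap X)\cup\{e_1,e_2\}$.
\item Yet the same swap produces $\{\{1,6\},\{2,7\},\{3,8\},\{4,10\},\{5,9\}\}$. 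Its gluing graph has two cycles, so it has genus $2$, and this is not a transition at all.
\end{itemize}
If the two arcs have $2m$ sides each, the rung-free matchings are only a $\Theta(m^{-3/2})$ fraction of all non-crossing matchings of the block. So most of your family is bad; this is not a bookkeeping issue. \cref{claim:crossing_parallel_chords} cannot rescue it, because it describes crossings in $X'$, where $e_1,e_2$ are flatly parallel, and that is exactly the structure the swap destroys.

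\textbf{What is missing} is the device of the paper's \cref{claim:enough_to_project}. The variable chords must be confined to maximal \emph{contiguous} intervals of sides that contain no endpoint of a fixed chord. Then they cross nothing before or after the swap, and the swap acts identically on the fixed core. Such intervals can have odd size in $X$, because bundles $C_{ij}$ of $X$-chords run between different intervals (for example, rungs of classes untouched by the swap). One therefore also fixes the centre chord of each odd bundle, and when necessary its first and last chords. Planarity bounds the number of odd bundles by $3k-6$, giving $|E_1|\le 228g-94$. Via \cref{claim:ratio_of_catalan} this yields $\overline{P}(S,T)\ge C_g n^{-684g+280}$.

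You also deferred the explicit counting: as written you only get $n^{-O(g)}$, while the statement fixes the exponent. With the paper's bound in place of your Step 3 and that counting done, your canonical paths give about $C_g n^{-768g+322}$. This is stronger than, and so implies, the stated $C_g n^{-1488g+620}$.
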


\end{proof}

\begin{proof}[Proof of \cref{prop:restriction_chain_gap}]
	The characterization of $\chordsignatures{S}$ as the set of chord diagrams with fixed extremal chords and non-intersecting matchings on subsets of the remaining sides shows that the restricted chain $P_S$ can be reduced to a combination of genus $0$ chains, each defined on a set of $H(S)$. Indeed, for every $A \in H(S)$, all the chords with at least one endpoint in $A$ have both endpoints in $A$, and the chords with endpoints in $A$ do not intersect each other. For every $A \in H(S)$, let $\tilde{P}_A$ be the Markov chain which at each step chooses uniformly a random pair of chords from those of $A$ and tries to swap them without creating new crossings. Then the restricted chain can be written as $P_S = \sum_{A \in H(S)} p_A \tilde{P}_A + (1-\sum_{A\in H(S)} p_A) \mathrm{Id}$, where $p_A = {\abs{A} \choose 2} / {n \choose 2}$. This description shows that $P_S$ is irreducible, as each $\tilde{P}_A$ is an irreducible genus $0$ chain. If the chains $\tilde{P}_A$ satisfy Poincar\'{e} inequalities with constants $\tilde{\gamma}_A$, then it is standard that (see e.g. \cite[Corollary 12.13]{levin_peres_markov_chains})
	\begin{equation*}
		\gamma_S \geq \min_{A\in H(S)} p_A \tilde{\gamma}_A \, .
	\end{equation*}
	By the bound \eqref{eq:cohen_gap_lower_bound}, there exists a universal constant $C > 0$ such that  $\tilde{\gamma}_A \geq C \abs{A}^{-4}$, and so 
	\begin{equation} \label{eq:restriction_chain_gap}
		\gamma_S \geq C \min_{A \in H(S)}{\abs{A} \choose 2} / {n \choose 2} \cdot \abs{A}^{-4} \geq C n^{-4} \, .
	\end{equation}
\end{proof}

\begin{proof}[Proof of \cref{prop:projection_chain_gap}]
	By the classical Cheeger inequality (see e.g. \cite[Theorem 13.10]{levin_peres_markov_chains}), the spectral gap $\overline{\gamma}$ satisfies 
	\begin{equation*}
		\overline{\gamma} \geq \frac{\Phi^2}{2} \, , 
	\end{equation*}
	where $\Phi$ is the bottleneck ratio, given by
	\begin{equation*}
		\Phi = \min_{A \subseteq \parallelsigs{n}{g}, \overline{\pi}(A) \leq 1/2} \frac{\sum_{S\in A} \sum_{T \in \parallelsigs{n}{g} \setminus A} \overline{\pi}(S)\overline{P}(S,T) }{\overline{\pi}(A)} \,.
	\end{equation*}
	The proposition will follow by showing that $\Phi > C_g n^{-744g + 310}$. 
	
	Let $A \subseteq \parallelsigs{n}{g}$. By \cref{thm:irredfixedgenus}, the fixed genus chord swap chain is irreducible, so there must exist $S \in A$ and $T \in \parallelsigs{n}{g} \setminus A$ with $\overline{P}(S,T) > 0$, and so the numerator on the right-hand side is always positive. We can then bound the bottleneck ratio by
	\begin{equation} \label{eq:bottleneck_lower_bound}
		\Phi \geq \min_{S\neq T \in \parallelsigs{n}{g}, \overline{P}(S,T)>0} 2 \overline{\pi}(S)\overline{P}(S,T) \,.
	\end{equation}

	To bound $\overline{\pi}(S)$ in \eqref{eq:bottleneck_lower_bound}, note that 
	\begin{align}
		\overline{\pi}(S) := \sum_{X \in \chordsignatures{S}} \pi(X) 
		&= \frac{\abs{\chordsignatures{S}}}{\abs{\chorddiagrams{n}{g}}} = \frac{\abs{\chordsignatures{S}}}{\sum_{T \in \parallelsigs{n}{g}} \abs{\chordsignatures{T}}} \nonumber \\
		&\geq \frac{\abs{\chordsignatures{S}}}{\abs{\parallelsigs{n}{g}}\max_{T \in \parallelsigs{n}{g}} \abs{\chordsignatures{T}}} \geq \frac{1}{\abs{\parallelsigs{n}{g}}}\min_{T \in \parallelsigs{n}{g}} \frac{\abs{\chordsignatures{S}}}{\abs{\chordsignatures{T}}} \label{eq:bounding_pi_bar} \, .
	\end{align}

	To bound the number of signatures $\abs{\parallelsigs{n}{g}}$, note that for every integer $k$, the total number of signatures with $k$ elements is bounded above by the number of ways we can place $2k$ chords (not necessarily disjoint) on a circle of $2n$ points. By \cref{lem:number_of_flatly_parallel}, $k \leq 6g-3$, and so there exists a constant $C_g$ such that 
	\begin{equation} \label{eq:num_of_signatures} 
		\abs{\parallelsigs{n}{g}} \leq  \sum_{k=1}^{6g-3} (2n)^{4k} \leq C_g \cdot n^{24g-12} \, .	
	\end{equation}

	The ratio between the number of chord diagrams of different signatures is bounded by the following claim, which is essentially a computation on ratios of Catalan numbers.

	\begin{claim}\label{claim:ratio_of_catalan}			
		Let $n\geq 1$. For $i\in \{1,2\}$, let $A_i$ be a finite set of $k_i$ chords in $P$, and let $\mathcal{J}_i$ be a partition of the sides of the polygon $P$ without the endpoints of $A_i$ into at most $2k_i$ even-sized sets. Let $\mathcal{X}_i$ be the set of all chord diagrams $X$ which contain $A_i$ and such that for every $J \in \mathcal{J}_i$, the restriction of $X$ to $J$ is a perfect non-crossing matching. Then there exists a constant $C_{k_1, k_2}$ depending only on $k_1$ and $k_2$ such that 
		\begin{equation*}
			\frac{\abs{\mathcal{X}_1}}{\abs{\mathcal{X}_2}} \geq C_{k_1, k_2} n^{-3k_2} \, .
		\end{equation*}
	\end{claim}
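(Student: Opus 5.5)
The plan is to compute both cardinalities exactly as products of Catalan numbers and then compare them using the standard two-sided asymptotics $\mathrm{Cat}(m) \asymp 4^m (m+1)^{-3/2}$.

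\textbf{Step 1: reduce to products of Catalan numbers.} For $i \in \{1,2\}$, every side not covered by $A_i$ lies in exactly one $J \in \mathcal{J}_i$. The defining condition for $\mathcal{X}_i$ forces the chords meeting $J$ to form a perfect non-crossing matching of $J$, so they stay inside $J$. A diagram in $\mathcal{X}_i$ is therefore determined by $A_i$ together with an independent choice of non-crossing perfect matching on each part. Since $J$ is a set of sides in cyclic order, the number of such matchings on $J$ is $\mathrm{Cat}(|J|/2)$, whether or not $J$ is a single interval. Writing $m_J = |J|/2$, this gives
\begin{equation*}
	\abs{\mathcal{X}_i} = \prod_{J \in \mathcal{J}_i} \mathrm{Cat}(m_J), \qquad \sum_{J \in \mathcal{J}_i} m_J = n - k_i .
\end{equation*}

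\textbf{Step 2: two-sided bounds.} I would use universal constants $0 < c \le 1$ such that, for all $m \ge 0$,
\begin{equation*}
	c\, 4^m (m+1)^{-3/2} \le \mathrm{Cat}(m) \le 4^m (m+1)^{-3/2}.
\end{equation*}
\begin{itemize}
	\item \emph{Numerator.} Each of the at most $2k_1$ factors satisfies $\mathrm{Cat}(m_J) \ge c\,4^{m_J}(n+1)^{-3/2}$. Hence $\abs{\mathcal{X}_1} \ge c^{2k_1} 4^{n-k_1}(n+1)^{-3k_1}$.
	\item \emph{Denominator.} Here I keep the decay factors rather than discarding them, so that the count of at most $2k_2$ parts enters: $\abs{\mathcal{X}_2} \le 4^{n-k_2}\prod_{J}(m_J+1)^{-3/2}$.
\end{itemize}
The $4^{\pm k_i}$ factors are absorbed into $C_{k_1,k_2}$. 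The final inequality then follows by comparing the polynomial factors $\prod_J (m_J+1)^{-3/2}$ of the two products, each of which has at most $2k_i$ factors.

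\textbf{Main obstacle: the exponent.} The exponential parts match exactly, so the whole claim comes down to the polynomial factors. This is where I must be careful, and I am not confident the stated exponent is attainable by this route. The numerator lower bound loses a factor up to $n^{-3/2}$ per part, so it naturally produces $n^{-3k_1}$. The denominator can gain at most a constant, because for instance a single part of size $n-k_2$ gives $\abs{\mathcal{X}_2} \asymp 4^n n^{-3/2}$. For the exponent to come out as $-3k_2$, the number of parts on the numerator side must be controlled by $k_2$. I would try first to check whether this holds for the pairs of signatures where the claim is applied, namely those related by a single chord swap.

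\textbf{Fallback if that fails.} In the application both $k_i \le 6g-3$ by \cref{lem:number_of_flatly_parallel}, so a bound with exponent $-3\max(k_1,k_2)$ carries the same order into \eqref{eq:bounding_pi_bar}. However, that is a weaker statement than the one claimed, and I would flag it rather than substitute it. I also expect that the stated exponent $n^{-3k_2}$ cannot hold for all $k_1 > k_2$: taking $\mathcal{J}_2$ to be a single part and $\mathcal{J}_1$ to consist of $2k_1$ parts of equal size makes the ratio of order $n^{-3k_1+3/2}$.
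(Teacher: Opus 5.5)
Your computation is correct and follows the same route as the paper: write each $\abs{\mathcal{X}_i}$ as a product of Catalan numbers over the parts of $\mathcal{J}_i$, then compare using $\mathrm{Cat}(m)\asymp 4^m m^{-3/2}$. Your doubt about the exponent is justified, and your example is a valid counterexample: the statement as printed is false whenever $k_1>k_2$. For instance, take $k_2=1$ with a single part, and $k_1=2$ with four parts of $(n-2)/2$ sides each, where $n\equiv 2 \pmod 4$. The ratio is then of order $n^{-9/2}$, whereas the claim asserts at least $C n^{-3}$.

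The paper reaches $n^{-3k_2}$ only through a sign slip. In its displayed chain, the factors $\prod_{J\in\mathcal{J}_1}(\abs{J}/2)^{3/2}$ and $\prod_{J\in\mathcal{J}_2}(\abs{J}/2)^{3/2}$ should carry exponent $-3/2$, as $\mathrm{Cat}(m)\ge c^{-1}4^m m^{-3/2}$ dictates. With the correct sign, it is the $\mathcal{J}_2$ factor that can be discarded as $\ge 1$, and the AM--GM step must be applied to $\mathcal{J}_1$. This yields exactly your bound $C_{k_1,k_2}n^{-3k_1}$. You can even keep a factor $n^{3/2}$ from the denominator, because the largest part of $\mathcal{J}_2$ has at least $(n-k_2)/k_2$ sides. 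That gives $n^{-3k_1+3/2}$, which your example shows is sharp; so your remark that the denominator gains at most a constant is slightly off.

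So you should state and prove the bound $C_{k_1,k_2}n^{-3k_1}$ outright, rather than hedging. There is no need to look for special structure in swap-related pairs. That search would not help anyway, since \eqref{eq:bounding_pi_bar} takes a minimum over all $T$.

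This corrected form is all the paper actually needs:
\begin{itemize}
\item In \eqref{eq:projected_stationary_is_big_enough}, the numerator class $\chordsignatures{S}$ has at most $12g-6$ prescribed chords, which gives the same $n^{-36g+18}$.
\item In \cref{claim:enough_to_project}, the paper in fact applies the bound with the numerator's count $\abs{E_1}$, i.e.\ already in the corrected form.
\end{itemize}

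One small correction to your fallback: $k_i$ counts extremal chords, not flatly parallel classes. Each of the at most $6g-3$ classes from \cref{lem:number_of_flatly_parallel} contributes up to two extremal chords, so $k_i\le 12g-6$ rather than $6g-3$.
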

	
	\begin{proof}
		Denoting by $C_m$ the $m$-th Catalan number, we have by the definition of $\mathcal{X}_i$ that
		\begin{equation}
			\frac{\abs{\mathcal{X}_1}}{\abs{\mathcal{X}_2}} = \frac{\prod_{J \in \mathcal{J}_i} C_{\abs{J}/2}}{\prod_{J \in \mathcal{J}_2} C_{\abs{J}/2}} \, .
		\end{equation}
		By a well-known estimate, there exists a constant $c > 1$ such that 
		\begin{equation*}
			\frac{1}{c} \frac{4^m}{m^{3/2}} \leq C_m \leq c \frac{4^m}{m^{3/2}} \, .
		\end{equation*}
		Since $\sum_{J \in \mathcal{J}_i}\abs{J}/2 = n-k_i$ and since $\abs{\mathcal{J}_i} \leq 2k_i$, by the AM-GM inequality we have
		\begin{equation*}
			\prod_{J \in \mathcal{J}_2}\frac{\abs{J}}{2} \leq \left(\frac{n-k_2}{\abs{\mathcal{J}_2}} \right)^{\abs{\mathcal{J}_2}} \leq n^{2k_2} \, ,
		\end{equation*}
		and so 
		\begin{align*}
			\frac{\abs{\mathcal{X}_1}}{\abs{\mathcal{X}_2}}
			&\geq 
			\frac{(1/c)^{\abs{\mathcal{J}_1}}4^{\sum_{J \in \mathcal{J}_1} \abs{J}/2} ~ \prod_{J \in \mathcal{J}_1}(\abs{J}/2)^{3/2}} {c^{\abs{\mathcal{J}_2}}4^{\sum_{J \in \mathcal{J}_2} \abs{J}/2} ~\prod_{J \in \mathcal{J}_2}(\abs{J}/2)^{3/2}} \\
			&\geq
			c^{-(\abs{\mathcal{J}_1} + \abs{\mathcal{J}_2})}4^{\sum_{J \in \mathcal{J}_1}\abs{J}/2-\sum_{J \in \mathcal{J}_2}\abs{J}/2} \prod_{J \in \mathcal{J}_2}\left(\abs{J}/2 \right)^{-3/2} \\
			&\geq 
			c^{-(2k_1 + 2k_2)} 4^{k_2 - k_1} n^{-3k_2} \, . \qedhere
		\end{align*}		
	\end{proof}
	The signatures classes $\chordsignatures{S}$ and $\chordsignatures{T}$ in \eqref{eq:bounding_pi_bar} are exactly of the form required by \cref{claim:ratio_of_catalan}, with $k_i \leq 12g -6$ (due to \cref{lem:number_of_flatly_parallel}). There thus exists a constant $C_g$ such that  
	\begin{equation}\label{eq:projected_stationary_is_big_enough}
		\overline{\pi}(S)
		\geq 
		\frac{1}{\abs{\parallelsigs{n}{g}}}\min_{T \in \parallelsigs{n}{g}} \frac{\abs{\chordsignatures{S}}}{\abs{\chordsignatures{T}}} 
		\geq 
		C_g n^{-24g + 12} n^{-36g + 18} = C_g n^{-60g + 30}
		\, .
	\end{equation}
	
	To bound the transition probability in \eqref{eq:bottleneck_lower_bound}, note that for any pair of signatures $S$ and $T$, we have 
	\begin{equation*}
		\overline{P}(S,T) := \frac{1}{\overline{\pi}(S)} \sum_{\substack{X \in \chordsignatures{S} \\ Y \in \chordsignatures{T}}} \pi(x) P(X,Y) = \frac{1}{\sum_{X \in S} \pi(X) } \sum_{\substack{X \in \chordsignatures{S} \\ Y \in \chordsignatures{T}}} \pi(X) P(X,Y) \, .
	\end{equation*}
	Since $\pi(X)$ is the uniform distribution over $\chorddiagrams{n}{g}$ and $P(X,Y) = 1/(2{n \choose 2})$ if it is not zero, we get 
	\begin{equation*}
		\overline{P}(S,T) = \frac{1}{\abs{\chordsignatures{S}}} \sum_{\substack{X \in \chordsignatures{S} \\ Y \in \chordsignatures{T}}} \frac{\mathbf{1}_{P(X,Y) \neq 0} }{2 {n \choose 2}} \, .
	\end{equation*}

	We now show that there are sufficiently many pairs $(X,Y) \in \chordsignatures{S} \times \chordsignatures{T}$ with non-zero transition probability. 
	\begin{claim}\label{claim:enough_to_project}
		There exists a constant $C_g$ depending only on $g$ such that for every $S,T \in \chorddiagrams{n}{g}$ with $\overline{P}(S,T)>0$, there exists a subset $\mathcal{X}' \subseteq \chordsignatures{S}$ of size				
		\begin{equation*}
			\abs{\mathcal{X}'} \geq C_g n^{-684g + 282} \abs{\chordsignatures{T}}\, ,
		\end{equation*}
		such that for every $X' \in \mathcal{X}'$ there exists a $Y' \in \chordsignatures{T}$ that is a obtained from $X'$ by a chord swap.		
	\end{claim}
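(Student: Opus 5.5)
Since $\overline{P}(S,T)>0$, there is at least one pair $(X_0,Y_0)\in\chordsignatures{S}\times\chordsignatures{T}$ with $Y_0$ obtained from $X_0$ by a genus-preserving swap of two chords $e,f$. The plan is to take this single witness and ``transplant'' the swap into many diagrams of signature $S$. We then count those diagrams via \cref{claim:ratio_of_catalan}.

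\textbf{Step 1: localize the swap.} Let $R$ be the union of the extremal chords of $S$, the extremal chords of $T$, the two swapped chords $e,f$ in $X_0$, and their images in $Y_0$. By \cref{lem:number_of_flatly_parallel}, $R$ has at most $2(12g-6)+4=O(g)$ chords. Let $K$ be the set of sides incident to $R$. The set of sides not in $K$ splits into at most $O(g)$ ``gaps''. We fix a bounded number of extra chords of $X_0$ near these gaps (e.g.\ the chords of $X_0$ with an endpoint in $K$, and chords crossing $e,f$ that witness the flatly parallel structure), so that the resulting configuration $A$ has $k=O(g)$ chords. Define $\mathcal{X}'$ to be the set of diagrams $X$ that contain $A$ and whose remaining sides are split into the blocks of $H(S)$ refined by the gaps of $K$, with a perfect non-crossing matching on each block.

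\textbf{Step 2: verify membership and the swap.} We must check that every $X\in\mathcal{X}'$ lies in $\chordsignatures{S}$. Performing the swap of $e,f$ on $X$ should then give a $Y\in\chordsignatures{T}$ of the same genus. The intended argument is that genus and flatly parallel classes depend only on the crossing structure. The non-crossing blocks inserted inside a region not cut by any chord of $A$ behave like the degree-$1$/$2$ vertex deletions in the proof of \cref{lem:number_of_flatly_parallel}. Such blocks neither change the genus (a non-crossing block changes the vertex count of the gluing graph by exactly its number of chords) nor create new crossings with extremal chords. Hence the extremal chords, and the reduced map $M_r$, are identical for $X$ and $X_0$, and likewise for $Y$ and $Y_0$.

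I expect this step to be the main obstacle. Swapping $e,f$ may move sides between an $I_i$ block and a $J_j$ block. It may also alter which chords are extremal in $T$. Handling this requires choosing $A$ large enough that every block is strictly inside one region of both $S$ and $T$ simultaneously, i.e.\ taking the common refinement of $H(S)$ and $H(T)$ cut by the endpoints of $e,f$. I would first try defining the blocks as the maximal intervals of sides containing no endpoint of a chord in $A$ and lying in a single element of both $H(S)$ and $H(T)$, and argue the swap touches only chords of $A$.

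\textbf{Step 3: count.} $\mathcal{X}'$ and $\chordsignatures{T}$ are both of the form in \cref{claim:ratio_of_catalan}, with $k_1=O(g)$ fixed chords and at most $2k_1$ blocks (after checking the block count) for $\mathcal{X}'$, and $k_2\le 12g-6$ for $T$. Applying \cref{claim:ratio_of_catalan} gives $\abs{\mathcal{X}'}\ge C_g n^{-3k_2}\abs{\chordsignatures{T}}$, which is polynomial in $n$ with exponent linear in $g$. Tracking the crude constants in the sizes of $A$ and the number of blocks should reproduce the stated exponent $-684g+282$; if the block count exceeds $2k_1$, we pad $A$ with additional fixed chords, which only worsens the constant in the exponent.
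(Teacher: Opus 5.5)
\textbf{There is a genuine gap, in Steps 1--2, and it is exactly where the paper's proof does its real work.}

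Your blocks are cut out by \emph{intervals}: the gaps of $K$, or maximal intervals inside one element of $H(S)$ and of $H(T)$. Such intervals are not closed under the chords of the witness $X_0$. Already for one flatly parallel class with extremal chords $a,b$, the non-extremal chords of the class run from one arc of $I^{a,b}$ to the other, i.e.\ between two different blocks. Cutting at the endpoints of $e,f$ or of extremal chords of $T$ creates further such bundles.

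A bundle can contain order $n$ chords, so it cannot be fixed. If its size is odd, the blocks it joins have odd size, and then the family ``contains $A$ and is a perfect non-crossing matching on every block'' is empty. Your remedy does not address this: the chords of $X_0$ with an endpoint in $K$ are just the chords of $R$ again, and the chords crossing $e,f$ are not a bounded set.

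The missing idea is the paper's parity repair. Let $J_1,\dots,J_k$, with $k=O(g)$, be the maximal intervals left after deleting the endpoints of $E_0=E_X\cup(E_Y\cap X)\cup\{e_1,e_2\}$, and let $C_{ij}$ be the chords of $X$ between $J_i$ and $J_j$. For every odd $C_{ij}$ one additionally fixes its middle chord, and also its first and last chords when $(\abs{C_{ij}}-1)/2$ is odd. Then every bundle splits into even halves, so all final intervals $I_s$ have even size. The odd pairs form a planar graph on the $J_i$, so the enlarged fixed set stays linear in $g$; the paper records $\abs{E_1}\le 228g-94$.

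The resulting $\mathcal{X}'$ deliberately \emph{destroys} the bundles, replacing them by non-crossing matchings inside the $I_s$. This is also what the paper uses for your Step 2. Every non-fixed chord lies in an interval containing no fixed endpoint, so it is crossing-free both before and after the swap, and only the fixed chords can affect the signatures of $X'$ and $Y'$. Note also that the fixed set must consist of chords of $X_0$ only: $f_1,f_2$ cannot be fixed alongside $e_1,e_2$, which is why the paper uses $E_Y\cap X$ rather than all extremal chords of $T$.

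\textbf{Step 3 misidentifies where the exponent comes from.} In the Catalan comparison, the polynomial loss $\prod_J(\abs{J}/2)^{-3/2}$ comes from the blocks of the family in the numerator, here $\mathcal{X}'$. That is why the paper's proof ends with $\abs{\mathcal{X}'}/\abs{\chordsignatures{S}}\ge C_g n^{-3\abs{E_1}}=C_g n^{-684g+282}$.

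The printed statement of \cref{claim:ratio_of_catalan} has $k_2$ in the exponent because of a sign slip in the $m^{-3/2}$ factor. Read literally, it would give you $n^{-36g+18}$ for any $\mathcal{X}'$ of that form. That is stronger than the claim you are proving and should have been a warning sign.

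So the number of fixed chords and blocks in $\mathcal{X}'$ is not a harmless constant that can be padded freely. Each extra fixed chord can cost a factor $n^{-3}$, and the bound on $\abs{E_1}$ coming from the parity repair is precisely what yields the exponent $-684g+282$.
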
	
	\begin{proof}
		Let $X \in \chordsignatures{S}$ and let $Y \in \chordsignatures{T}$ be chord diagrams such that there exist chords $e_1,e_2 \in X$ and $f_1,f_2 \in Y$ such that $Y$ is obtained from $X$ by the swap $(e_1, e_2) \mapsto (f_1, f_2)$. Let $E_X$ be the set of extremal chords of $X$ , let $E_Y$ the set of extremal chords of $Y$, and let
		\[
		 	E_0 = E_X \union \left(E_Y \cap X \right) \union \{e_1, e_2\} \, .
		\]
		If we remove from $P$ all endpoints of chords in $E_0$, the remaining sides can be partitioned into maximally long disjoint intervals $J_1, \ldots J_k$; by \cref{lem:number_of_flatly_parallel}, 
		\begin{equation} \label{eq:num_of_intial_chords_to_remove}
			k \leq 2\abs{E_0} \leq 2(\abs{E_X} + \abs{E_Y \cap X} + 2) \leq (12g-6) + (12g-6) + 4 = 24g-8 \, .
		\end{equation}		
		Ideally, we would have liked to set $\mathcal{X}'$ to be the set of all chord diagrams in $\chordsignatures{S}$ which contain the chords $E_0$, and whose restriction to the interval $J_i$ is a perfect non-crossing matching on $J_i$ for every $i=1,\ldots,k$. However, since some of the intervals $J_i$ may have odd size, this is not always possible. This may be remedied by adding more chords to $E_0$, as follows. For every $1\leq i \neq j \leq k$, let $C_{ij}$ be the set of chords in $X$ between $J_i$ and $J_j$. The chords of $C_{ij}$ may be enumerated according to the order in which they are encountered traversing the polygon along the sides of $J_i$ in a clockwise manner. If $\abs{C_{ij}}$ is odd, let $e_{ij}^f$, $e_{ij}^c$ and $e_{ij}^\ell$ be its first, centre, and last chords, respectively. For every $i \neq j$, denote 
		\begin{equation*}
			E_{ij} = 
			\begin{cases}
				\{e_{ij}^c\} & \abs{C_{ij}} = 2m+1,\text{ $m$ even} \\
				\{e_{ij}^c, e_{ij}^f, e_{ij}^\ell\} & \abs{C_{ij}} = 2m+1, \text{ $m$ odd} \\
				\emptyset & \abs{C_{ij}}\text{ even} \, .
			\end{cases}
		\end{equation*}
		Finally, set
		\begin{equation*}
			E_1 = E_0 \union \bigcup_{1 \leq i \neq j \leq k}E_{ij} \, .
		\end{equation*}  
		If we remove from $P$ the endpoints of the chords in $E_1$, the remaining sides can be partitioned into maximally long disjoint intervals $I_1, \ldots I_\ell$, obtained by removing from each $J_i$ the endpoints of chords in $\union_{j\neq i}E_{ij}$. We now have the crucial fact that for every $s = 1,\ldots,\ell$, $\abs{I_s}$ is even. To see this, consider the interval $J_i$, and observe that removing $E_{ij}$ from $C_{ij}$ results in splitting $C_{ij}$ into two even parts $C_{ij}^1$ an $C_{ij}^2$. If we remove the endpoints of the chords of $C_{ij}$ from $J_i$, we obtain disjoint even-sized intervals, and restricting $X$ to every such interval gives a perfect non-crossing matching. Each $I_s$ is therefore a union of a collection of these even-sized intervals, a collection of the even-sized $C_{ij}$, and possibly of some even-sized $C_{ij}^1$ or $C_{ij}^2$ (see \cref{fig:odds_and_evens}).
		
		\begin{figure}[h!] 
			\centering
			\includegraphics[width=0.9\textwidth]{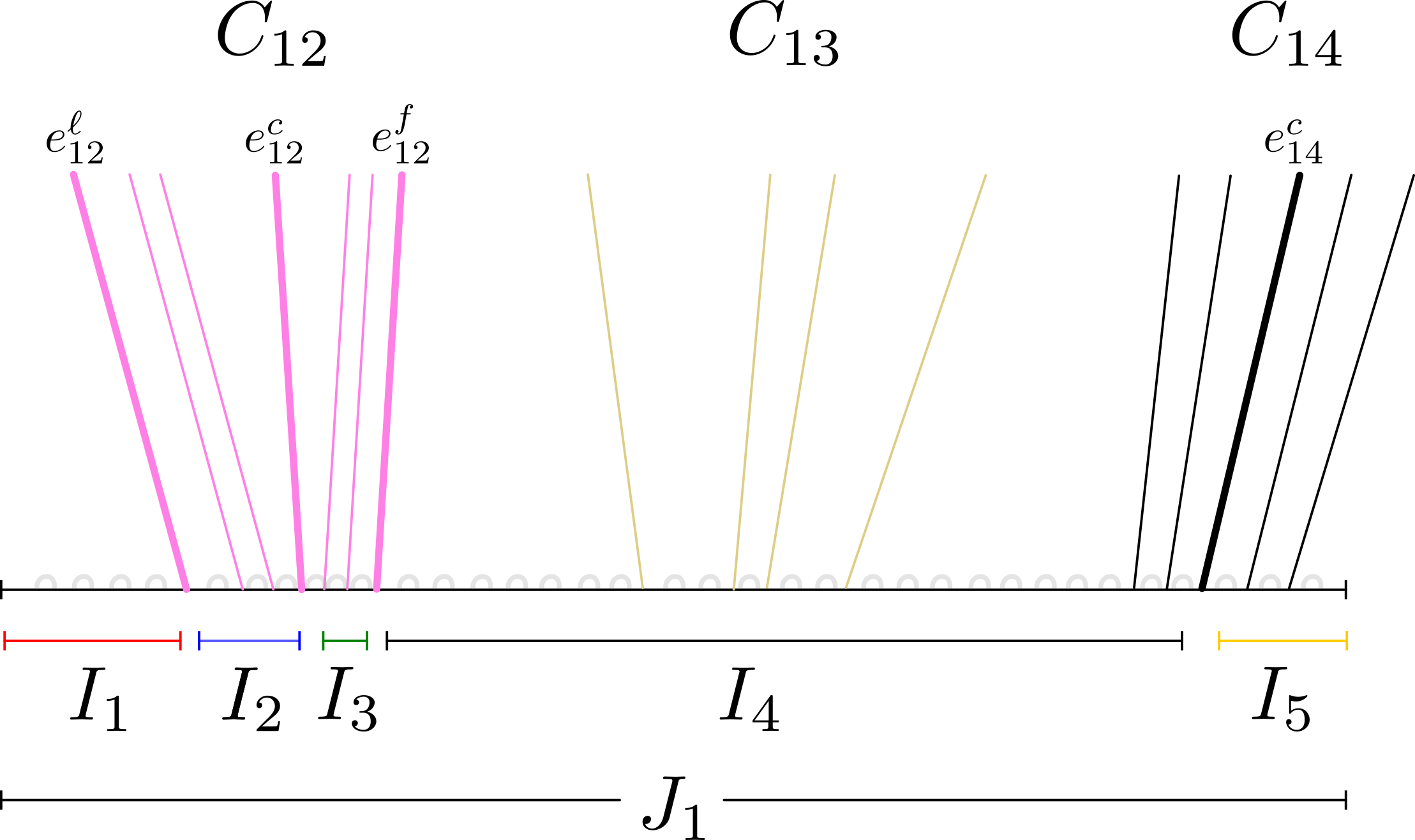}
			\captionsetup{width=.8\linewidth}
			\caption{\captionstyle{An example of how to ensure that that the intervals $I_s$ have even size. The interval $J_1$ is subdivided into $5$ intervals by removing the chords $e_{12}^\ell$, $e_{12}^c$, $e_{12}^f$, and $e_{14}^c$; after removing these chords, the sets $C_{12}$ and  $C_{14}$ have an even number of chords remaining. $C_{13}$ does not need to have any chords removed, since it already has even size. Grey chords represent arbitrary non-intersecting chords, and have an even number of sides. }}
			\label{fig:odds_and_evens}
		\end{figure}
		
		To bound $\abs{E_1}$, observe that the chords in different $E_{ij}$ do not cross each other. Thus, the graph $G$ whose vertices are the intervals $J_i$ ,and with an edge between $J_i$ and $J_j$ if and only if $C_{ij}$ is odd, is planar (the polygon $P$ itself gives an embedding). The number of non-empty sets $E_{ij}$ is therefore bounded by $3k-6$ (the maximum number of edges in a planar graph with $k$ vertices), and the total number of chords added to $E_0$ is bounded by $9k-18$; by \eqref{eq:num_of_intial_chords_to_remove}, we therefore have
		\begin{equation*}
			\abs{E_1} \leq \abs{E_0} + 9k-18 \leq 12g-4 + 9(24g-8)-18 = 228g-94 \, .
		\end{equation*}
		
		Let $\mathcal{X}'$ be the set of of all chord diagrams in $\chordsignatures{S}$ which contain the chords $E_1$, and whose restriction to the interval $I_i$ is a perfect non-crossing matching for every $i=1,\ldots, \ell$. Then for every $X' \in \mathcal{X}'$, the swap $(e_1, e_2) \mapsto (f_1, f_2)$ yields a $Y' \in \chordsignatures{T}$. Indeed, $X'$ has the same signature as $X$, and the chords on the intervals $I_i$ are non-intersecting and cannot be extremal in either $X$ or $Y$ and so cannot affect the signature. Finally, by \cref{claim:ratio_of_catalan}, there exists a constant $C_g$ such that 
		\begin{equation*}
			\frac{\abs{\mathcal{X}'}}{\abs{\chordsignatures{S}}} \geq C_g n^{-3\abs{E_1}} \geq C_g n^{-684g + 282}\, .
		\end{equation*}		
	\end{proof}			
	With \cref{claim:enough_to_project} in hand, we have 
	\begin{equation}\label{eq:projected_transition_is_big_enough}
		\overline{P}(S,T) = \frac{1}{\abs{\chordsignatures{S}}} \sum_{\substack{X \in \chordsignatures{S} \\ Y \in \chordsignatures{T}}} \frac{\mathbf{1}_{P(X,Y) \neq 0} }{2 {n \choose 2}} 
		\geq 
		\frac{1}{n^2} \frac{1}{\abs{\chordsignatures{S}}} \sum_{X' \in \mathcal{X}'} 1 
		=
		\frac{1}{n^2} \frac{\abs{\mathcal{X}'}}{\abs{\chordsignatures{S}}} 
		\geq C_g n^{-684g + 280}
		\, ,
	\end{equation}
	Plugging in \eqref{eq:projected_stationary_is_big_enough} and \eqref{eq:projected_transition_is_big_enough} into the bottleneck ratio \eqref{eq:bottleneck_lower_bound}, we get 
	\begin{equation*}
		\Phi \geq C_g n^{-744g + 310} \, 
	\end{equation*}
	as needed.

\end{proof}

\subsection*{Acknowledgements}
We thank Alessandra Caraceni and Alexandre Stauffer for helpful comments and discussions.

\bibliographystyle{amsalpha}
\bibliography{chord_swapping_bibliography}

\end{document}